\title{A Gradient Smoothed Functional Algorithm with Truncated Cauchy Random
Perturbations for Stochastic Optimization}
\author{
Akash Mondal\\
{\normalsize Indian Institute of Science} \\
{\normalsize \texttt{akashmondal@iisc.ac.in}}
\and 
Prashanth L. A.\\
{\normalsize Indian Institute of Technology Madras}\\
{\normalsize \texttt{prashla@cse.iitm.ac.in}}
\and
Shalabh Bhatnagar\\
{\normalsize Indian Institute of Science} \\
{\normalsize \texttt{shalabh@iisc.ac.in}}
}
\date{}
\begin{document}

\maketitle

\begin{abstract}
    In this paper, we present a stochastic gradient algorithm for minimizing a smooth objective function that is an expectation over
noisy cost samples, and only the latter are observed for any given parameter. Our algorithm employs a gradient estimation scheme with random perturbations, which are formed using the truncated Cauchy distribution from the $\delta$ sphere. We analyze the bias and variance of the proposed gradient estimator. Our algorithm is found to be particularly useful in the case when the objective function is non-convex, and the parameter dimension is high. From an asymptotic convergence analysis, we establish that our algorithm converges almost surely to the set of stationary points of the objective function and obtain the asymptotic convergence rate. We also show that our algorithm avoids unstable equilibria, implying convergence to local minima. Further, we perform a non-asymptotic convergence analysis of our algorithm. In particular, we establish here a non-asymptotic bound for finding an $\epsilon$-stationary point of the non-convex objective function. Finally, we demonstrate numerically through simulations that the performance of our algorithm outperforms GSF, SPSA and RDSA by a significant margin over a few non-convex settings and further validate its performance over convex (noisy) objectives.
\end{abstract}

\section{Introduction}
In this paper, we consider the following stochastic optimization (SO) problem:
\begin{equation}
\label{def:eq1}
    \textrm{ Find } f^*:=\inf_{x\in\R^d}\left\{f(x) =\int_\Xi F(x,\xi)dP(\xi)\right\},
\end{equation}
where $f\colon \mathbb{R}^d\to \mathbb{R}$ is a smooth function that could be highly nonlinear with multiple local minima, $\xi$ is the noise random variable (r.v.) with support $\Xi$, and $F(x,\xi)$ is a noisy observation of the function value $f(x)$. We do not assume precise gradient information is available. Instead, gradients need to be estimated using the aforementioned noisy observations at certain parameter values.

Stochastic approximation (SA) is an important technique for solving SO problems. 
\cite{RobbinsMonro51} first proposed the SA approach for the problem of root finding and \cite{kiefer} presented the first application of SA for solving SO problems.
Many popular incremental-update procedures for root finding involving noisy function observations are SA algorithms. These algorithms are employed for back-propagation in neural networks \citep{gawthrop}, for solving least squares objectives \citep{yao}, and finding optimal policies in reinforcement learning problems \citep{bertsekas2019}. As a result, advancements in general SA methodology have an impact on a wide range of applications.

We consider in this paper a stochastic gradient  (SG) algorithm based on gradient estimates obtained from noisy function observations. SG algorithms that update without direct (though possibly noisy) observations of the function gradient are referred to as zeroth order SO algorithms. The first such algorithm was presented in \cite{kiefer}, and requires $2d$ function observations per iteration since it perturbs each parameter component separately. 

Such an approach does not scale well in terms of the computational complexity as the parameter dimension $d$ is increased. 
 
 The random directions stochastic approximation (RDSA) method has been presented in \cite{kushcla} and also explored in \cite{chin1997comparative}. The idea here is to randomly perturb all parameter components simultaneously using random vectors that are uniformly distributed over the surface of the unit sphere. In \cite{prashanth2017rdsa}, independent, symmetric, uniformly distributed perturbations have been explored and both gradient and Newton RDSA algorithms have been proposed and analyzed for their asymptotic convergence properties and rates.
 
 The smoothed functional (SF) algorithm based on independent Gaussian random perturbations has been independently studied in \cite{katkul,Kreimer,nesterov17,bhatnagar}. We shall refer to this algorithm as GSF. The idea underlying GSF is to approximate the convolution of the objective-function gradient with a multi-variate Gaussian PDF with the convolution of the objective function with a scaled Gaussian. Thus, this procedure works with only one simulation regardless of the parameter dimension. A two-simulation finite-difference version of the same with lower bias has been studied in \cite{STYBLINSKI90} and \cite{chin1997comparative}. In \cite{bhat2}, Newton-based algorithms with biased gradient and Hessian estimates obtained from Gaussian perturbations have been analyzed for their asymptotic convergence.

 The simultaneous perturbation stochastic approximation (SPSA) algorithm, see \cite{Spall92}, has also gathered attention due to the low computational effort required in this scheme as well as the ease of implementation. This algorithm randomly perturbs all parameter components simultaneously by using perturbation random variates whose properties are commonly satisfied by independent, symmetric, zero-mean, Bernoulli r.v.

\vspace{-2.5ex}

\paragraph{Our contributions.} We now summarize our contributions below.\\
{\textit{(a) SG with Cauchy perturbations:}} For solving the SO problem \cref{def:eq1}, we propose an SG algorithm, which performs gradient estimation using SF estimates based on $d$-dimensional truncated Cauchy perturbations inside a sphear of radius $\delta$. 
\\[0.5ex]
{\textit{(b) Asymptotic convergence:}} We prove that our algorithm converges asymptotically to the set of local minima of the objective function $f$. 
Note here that one can ordinarily prove convergence of an SG algorithm to the set of equilibria of the associated gradient ODE. These points however also include local maxima and saddle points (in addition to local minima) of the given objective function. We however prove by verifying a result from \cite{pemantle} that the convergence of our algorithm is only to local minima. In fact, the other fixed points (that are not local minima) are unstable equilibria of the ODE and which we show are avoided by the algorithm in the limit. A result of this nature  has not been claimed for GSF, to the best of our knowledge. \\[0.5ex]
{\textit{(c) Asymptotic convergence rate:}} Our algorithm provides better asymptotic mean-squared error (AMSE) as compared to SPSA and GSF, which are two very popular schemes for gradient estimation in the zeroth-order SO context that we consider. AMSE is a standard metric for comparing the (asymptotic) convergence rate of different algorthms, cf. \cite{chin1997comparative,prashanth2017rdsa}, and a better AMSE is beneficial in simulation optimization applications, where each function measurement is assumed to be computationally intensive. 
\\[0.5ex]
{\textit{(d) Non-asymptotic bound:}} We also provide  non-asymptotic bounds in the spirit of \cite{gadimi}, i.e., to an $\epsilon$-solution (see Definition \ref{def:def9} below) of the SO problem mentioned above. In the latter work, the authors consider a GSF scheme for gradient estimation, and provide an $O\left(\frac{1}{\epsilon^2}\right)$ bound on the number of iterations to find an $\epsilon$-solution under the assumption that the function $F$ accounting for the noisy observations is smooth. We provide a matching bound for our proposed algorithm. Further, unlike \cite{gadimi}, we also provide non-asymptotic bounds in the case when $F$  is not assumed to be smooth.
\\[0.5ex]
{\textit{(e) Simulation experiments:}} Numerical results using a quadratic function, Rastrigin's function and Rosenbrock's function establish that our algorithm outperforms GSF, SPSA and RDSA algorithm.
  
\paragraph{Comparison to related works.} In \cite{chin1997comparative,Spall92,bhatnagar03, prashanth2017rdsa}, the authors employ random  perturbations based gradient estimates within an SG framework, and mainly show asymptotic convergence to a stationary point for their algorithms.  
Our algorithm, on the other hand, is shown to converge asymptotically to local minima, and more importantly, at a better rate as our algorithm possesses a better AMSE in comparison to the aforementioned algorithms.

SO algorithms with non-convex objectives invariably suffer from the problem of converging to stationary points that are not necessarily local minima.  \cite{Rong15} suggests adding an additional noise term in the gradient estimate when entering in the neighborhood of a stationary point that would ensure escape from saddle points with high probability while working with an unbiased SG estimate. In this work, however, we show that the inherent noise in the biased stochastic gradient estimate helps one escape unstable equilibria such as saddle points and local maxima without injecting additional noise. Another approach tried out is to use a second-order method \cite{Zhu,krishna} to escape from unstable equilibria, but second-order methods in general are more computationally expensive.

We now briefly review some prior work on non-asymptotic analyses in the setting of a biased gradient oracle, which encapsulates the properties of a gradient estimator based on SF. In \cite{Base09,Devolder14}, algorithms with a deterministic gradient oracle are presented. In \cite{Prashanth16}, the authors analyze the rates achievable with inputs from a biased stochastic gradient oracle, under a convexity assumption. In \cite{Bhavsar2022}, the authors analyze SG algorithms for solving a non-convex SO problem with inputs from a biased noisy gradient oracle. The rate that they derive for a smooth objective matches the bound for our algorithm with a balanced estimator. Further, unlike \cite{Bhavsar2022}, we derive an improved non-asymptotic bound under a smoothness assumption for the noisy observation $F$. This rate is $\mathcal{O}(1/\sqrt{N})$, where $N$ is the number of iterations of our algorithm, and it matches the bound obtained for GSF in \cite{gadimi}. In \cite{krishna}, the authors derive a non-asymptotic bound of $\mathcal{O}(1/\sqrt{N})$ for a zeroth-order variant of the stochastic conditional gradient algorithm using Gaussian perturbations. We derive a matching bound for our algorithm, which is more efficient than the one in \cite{krishna} since their algorithm requires solving an optimization problem in each iteration.

The rest of the paper is organized as follows:
The framework for the optimization problem and some preliminaries on GSF and TCSF are presented in \cref{section-2}. \cref{section-3} provides the main asymptotic and non-asymptotic guarantees for TCSF while the convergence analysis is discussed in \cref{section-4}. \cref{experiments} presents simulation experiments that compare the performance of TCSF with several algorithm.

\section{SF gradient estimation with truncated Cauchy perturbations}
\label{section-2}
We now present the idea behind a SF scheme for estimating the objective function gradient proposed in \cite{katkul}.\\ 
For a function $f:\mathbb{R}^d\to\mathbb{R}$, define the smoothed function $g_\delta:\mathbb{R}^d\to\mathbb{R}$ as
\begin{align}
\label{eq1}
    g_\delta(x)&=\E_{h_\delta(u)} [f(x+u)]  =\E_{h_\delta(x-u)} [f(u)], \mbox{ } x\in \mathbb{R}^d,
\end{align}
where $h_\delta(x)$ is called the smoothing kernel or perturbation density function. 
The parameter $\delta$ controls the degree of smoothness of $g_\delta(x)$. \\
The SF scheme is useful especially if $f(x)$ is not well behaved, for instance, if it has several stationary points in a narrow region. In such a case, one may work with $g_\delta(x)$ as it would exhibit a smoother behavior, and in general, it would be easier to compute it's derivative as opposed to $f(x)$. 
We have listed down the conditions from \cite{rubinstein} which suggests that a density function needs to satisfy all of them to become a smoothed function in the SF scheme.

\subsubsection*{Rubinstein's Conditions for SF schemes}
\label{Rub}
\begin{enumerate}[label=(\alph*)]
    \item $h_\delta:\mathbb{R}^d\to\mathbb{R}$ such that $h_\delta(u)=\frac{1}{\delta^d}h(\frac{u}{\delta})$ is piecewise differentiable with respect to  $u$,
    \item $h_\delta(u)$  is a probability density function such that  $g_\delta(x) =\E_{h_\delta(u)}[f(x+u)]$ ,
    \item $\lim_{\delta \to 0}h_\delta(u)=\Delta(u)$, where $\Delta(u)$ is the Dirac-Delta function,
    \item $\lim_{\delta \to 0}g_\delta(\cdot)=f(\cdot)$.
\end{enumerate}

\begin{definition}
\label{def:def1}
{\bf [Truncated Cauchy Distribution]} A r.v.~is said to follow the truncated (to the $\delta$-sphere) Cauchy distribution with mean vector zero and covariance matrix $\Sigma = \delta^2\mathbb{I}_{d\times d}$ if its probability density function has the following form (with
$c_1$ being the normalization constant):
\begin{equation}
\label{def-cauchy}
    h_\delta(u)=\frac{\Gamma(\frac{d+1}{2})}{\pi^{\frac{d+1}{2}}c_1\delta^d}\frac{1}{(1+\frac{\lVert u \rVert^2}{\delta^2})^{\frac{d+1}{2}}} \:\:\ \mbox{ for }\: \lVert u \rVert ^2\leq \delta^2.
\end{equation}
\end{definition}
In the following remark we have shown how the truncated Cauchy distribution, defined above, satisfies the aforementioned conditions. 
\begin{remark}
We now show that the truncated Cauchy distribution that we employ satisfies the Rubinstein's conditions (a)-(d) stated above. Hence, it is a valid candidate to be used as a smoothing density functional in the SF algorithm.\\
Note that $h_\delta$ in our case is a truncated Cauchy distribution as defined in \cref{def-cauchy}
which is a probability density function. It is easy to see that $h_\delta(u)=$ $\displaystyle\frac{1}{\delta^d}h(\frac{u}{\delta})$ where  \[\displaystyle h(\frac{u}{\delta})=\frac{\Gamma(\frac{d+1}{2})}{\pi^{\frac{d+1}{2}}c_1}\frac{1}{(1+\frac{\lVert u \rVert^2}{\delta^2})^{\frac{d+1}{2}}}.\] Hence $h(u)$ denotes truncated Cauchy distribution  with $\delta=1$. However from \cref{eq1} using this $h_\delta(u)$ one can write $g_\delta$ as expectation of $f$ under $h_\delta(u)$ i.e., $g_\delta(x)=\E_{h_\delta(u)}[f(x+u)]$. We know that Dirac-Delta function ($\Delta(u)$) is a measure whose value is $\infty$ at origin and $0$ otherwise with $\displaystyle \int_{-\infty}^{\infty} \Delta(u) du=1$. For truncated Cauchy distribution, $\lim_{\delta \to 0}h_\delta(u)=\infty$ which is a Delta function and $\lim_{\delta \to 0}g_\delta(\cdot)=f(\cdot)$ as $\int_{\lVert u \rVert^2 \leq 1} \Delta(u) du=1$. Thus, Rubinstein’s conditions are satisfied in the case of the truncated Cauchy distribution.
\end{remark}
A truncated Cauchy distribution has been considered in Chapter 6 of \cite{bhatnagar}. However, the truncation there is not to the $\delta$-sphere, and more importantly, unlike them, we derive asymptotic as well as non-asymptotic rate results with the truncated Cauchy distribution specified above.

One can intuitively interpret the effect of $\delta$ on smoothing as follows: For smaller values of $\delta$, $g_\delta(x)$ is close to $f$. However, as $\delta$ increases, $g_\delta(x)$ becomes smoother with fewer sharp variations.
As explained in \cite{rubinstein}, the SF approach provides a helpful way for approximating the gradient of any function $f$. In particular, we have shown in the following proposition that for truncated Cauchy smoothing, the derivative of $g_\delta(x)$ can be calculated by taking the derivative of $h_\delta(x-u)$. This can be obtained via a simple application of the Leibnitz rule and  piece-wise differentiation property of $h_\delta(u)$.

\begin{proposition}
\label{pf7}
Let $g_\delta(x)$ be the smoothed function defined in \cref{def:eq1} under truncated Cauchy distribution. Then gradient of $g_\delta$ is
\begin{equation}
\label{eq2}
    \begin{split}
    \nabla g_\delta(x) =\frac{1}{\delta} \mathbb{E}_{h(u)} \left[f(x+\delta u) 
        \frac{(d+1)u}{(1+\lVert u \rVert^2)}\right].\\
    \end{split}
\end{equation}
\end{proposition}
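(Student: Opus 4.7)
The plan is to start from the second form of the smoothed function in \eqref{eq1}, $g_\delta(x) = \int h_\delta(x-u) f(u)\, du$, in which the dependence on $x$ is carried entirely by the kernel, and push the gradient inside the integral via the Leibniz rule (justified, as the excerpt suggests, by the piecewise smoothness of $h_\delta$ on $\|u\|\le\delta$). This reduces the proposition to an explicit computation of $\nabla_x h_\delta(x-u)$ followed by a change of variables that reinterprets the resulting integral as an expectation against the scale-one density $h$.

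The main algebraic step is the gradient of the truncated Cauchy kernel from \eqref{def-cauchy}. Writing $h_\delta(y) = \frac{c_d}{\delta^d}(1 + \|y\|^2/\delta^2)^{-(d+1)/2}$ for $\|y\|\le\delta$ (with $c_d = \Gamma(\tfrac{d+1}{2})/(\pi^{(d+1)/2}c_1)$), a direct chain-rule computation produces
\begin{equation*}
\nabla h_\delta(y) \;=\; -\,h_\delta(y)\cdot \frac{(d+1)\,y}{\delta^2 + \|y\|^2}.
\end{equation*}
The convenient feature here is that the derivative of the kernel factors as the kernel itself times a rational function of $y$; this is precisely what will let us re-express $\nabla g_\delta$ as an expectation against $h_\delta$ (and ultimately against $h$) rather than against $\nabla h_\delta$.

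Feeding this into the Leibniz expression and substituting $v = u - x$ (whose density is $h_\delta$ by the symmetry $h_\delta(-v)=h_\delta(v)$, with the minus sign from $x-u=-v$ absorbing the overall sign above), I obtain $\nabla g_\delta(x) = \mathbb{E}_{h_\delta(v)}\bigl[f(x+v)\,(d+1)v/(\delta^2+\|v\|^2)\bigr]$. Rescaling $v = \delta u$ so that $u$ has density $h$ on the unit ball then produces a single factor of $1/\delta$ from $\delta^2 + \|v\|^2 = \delta^2(1+\|u\|^2)$ and the expression collapses to exactly \eqref{eq2}. The subtlety that I would verify most carefully is the boundary contribution when invoking Leibniz at the jump $\|y\|=\delta$: the truncated Cauchy density does not vanish on the sphere, so a rigorous derivation either justifies the piecewise-derivative interpretation used in the excerpt, or mollifies $h_\delta$ near the boundary and passes to the limit. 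Either route leaves the algebraic form of \eqref{eq2} unchanged, so I expect this to be the only non-routine point in the argument.
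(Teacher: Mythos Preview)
Your proposal is correct and follows essentially the same route as the paper: both put the $x$-dependence into the kernel via the convolution form, differentiate the truncated Cauchy density (the paper does this after the change of variables $y=x+\delta v$, you do it directly on $h_\delta(x-u)$), and then rescale to land on an expectation against $h$. Your explicit flagging of the boundary contribution at $\|y\|=\delta$ is a point the paper's proof passes over silently, so your treatment is if anything more careful.
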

\begin{proof}
\cref{eq1} can be rewritten by considering  $u = \delta v$ as follows:
\begin{equation*}
    \begin{split}
        g_\delta(x)&=\frac{\Gamma(\frac{d+1}{2})}{\pi^{\frac{d+1}{2}}c_1}\int_{\left\lVert v\right\rVert^2\leq 1} \frac{f(x+\delta v)}{(1+\left\lVert v\right\rVert^2)^{\frac{d+1}{2}}} dv.\\
     \end{split}
\end{equation*}
Let's rewrite the above equality by a change of
variable $y=x+\delta v$ as follows:
\begin{equation*}
    \begin{split}
        g_\delta(x)&=\frac{\Gamma(\frac{d+1}{2})}{\delta^d\pi^{\frac{d+1}{2}}c_1}\int_{\left\lVert \frac{y-x}{\delta}\right\rVert^2\leq 1} \frac{f(y)}{(1+\left\lVert \frac{y-x}{\delta}\right\rVert^2)^{\frac{d+1}{2}}} dy.\\
     \end{split}
\end{equation*}
We apply the classic rule of differentiation over $x$ in the above 
equation, and simplify as follows:
\begin{equation*}
    \begin{split}
    \nabla g_\delta(x)
     &=\frac{1}{\delta}\int_{\lVert u \rVert^2\leq 1} f(x+\delta u )\bigg(\frac{(d+1)u}{(1+\lVert u \rVert^2}\bigg) h(u) du
      =\frac{1}{\delta }\E_u\bigg[\frac{f(x+\delta u)(d+1)u}{(1+\lVert u \rVert^2)}\bigg],
    \end{split}
\end{equation*}
where $\E_u$ denotes expectation w.r.t. $h(u)$.
\end{proof}

In \cref{eq2}, $h(u)$ indicates that $h_\delta$ is computed at $\delta=1$. Notice that the one-simulation smooth function under GSF scheme is specified in \cite{rubinstein} by
$\E_{h(u)}[f(x+\delta u)]$ and its derivative can be approximated by
${\displaystyle \frac{1}{\delta}\E_{h(u)}[f(x+\delta u)u]}$ where $u$ has the multivariate Gaussian distribution, a form also studied in \cite{bhatnagar03} for the case when the objective function is a long-run average cost. A finite-difference form of $\nabla g_\delta(x)$ would be ${\displaystyle \frac{1}{\delta}\E_{h(u)}[(f(x+\delta u)-f(x))u]}$. 

We now define the $\delta$-difference smoothed function $f_\delta(x):\R^d\to \R$ for $f(x)$ (an approximation to smoothed function) as below:
\begin{equation}
\label{def:def3}
    \begin{split}
        f_\delta(x)&=\E_{h_\delta(u)} (f(x+ u)-f(x))=\E_{h_\delta(x-u)}[f(u)-f(x-u)],
    \end{split}
\end{equation}
where $h_\delta(u)$ is smoothing kernel. The finite-difference gradient form $\nabla g_\delta(x)$ for GSF , see \cite{rubinstein}, is the same as $\nabla f_\delta(x)$ due to the fact that  $\E_{h(u)}(u)=0$ under Gaussian smoothing kernel.
As mentioned previously, we only have access to noisy observations of the objective function. Thus, to solve the problem \cref{def:eq1} using the SF algorithm one can consider the gradient of $f_\delta(x)$ to be (with $\xi$ having the same distribution as $\xi^+$) ${\displaystyle \nabla f_\delta(x)=\frac{1}{\delta}\E_{u,\xi}[(F(x+\delta u,\xi^+)-F(x,\xi))u]}$.  

It can similarly be shown as \cref{eq2} that for $f\in \mathcal{C}_L^{1,1}$ (satisfying \cref{def:ass2}) and with the truncated Cauchy smooth kernel, the gradient of $f_\delta$ can be expressed as follows:
\begin{equation}
\label{eq3}
    \begin{split}
    \nabla f_\delta(x) =\frac{1}{\delta} \E_{h(u)} \left[(f(x+\delta u)-f(x)) 
        \frac{(d+1)u}{(1+\lVert u \rVert^2)}\right].\\
    \end{split}
\end{equation}

\begin{algorithm}[t]
	\caption{Truncated Cauchy Smoothed Functional (TCSF) Algorithm}
	\label{alg:alg1}
	\begin{algorithmic}
		\STATE {\bfseries Input:} Initial point $x_1\in \R^d$,  non-negative step-sizes $\{\gamma_k\}_{k\geq1}$,  and smoothing parameter $\delta_k > 0$.
		\FOR{$k=1,2,\ldots$}
		\STATE Generate $u_k$ from \cref{def-cauchy},
		 compute $G(x_k,\xi_k^+,\xi_k,u_k,\delta_k)$ using \cref{eq4}, 
		and update
		\begin{equation}
			\label{def:eq2}
			x_{k+1}=x_{k}-\gamma_k G(x_k,\xi_k^+,\xi_k,u_k,\delta_k).
		\end{equation}
		\ENDFOR
	\end{algorithmic}
\end{algorithm}

In this work, we propose the $\delta$-difference Cauchy smoothed functional scheme. Now note that,\\ $\E_{h(u)}\left[\frac{u}{1+\lVert u \rVert^2}\right]\ne 0$ and so we propose a two-simulation finite-difference gradient estimate instead of a one-simulation estimate as it can be seen to have a lower bias. 

For the case of noisy function observations, see \eqref{def:eq1}, the SF gradient with truncated Cauchy would simply be ${\displaystyle \nabla f_\delta(x)=}$ ${\displaystyle \frac{1}{\delta}\E_{u,\xi^+,\xi}\left[(F(x+\delta u,\xi^+)-F(x,\xi))\frac{(d+1)u}{(1+\lVert u \rVert^2)}\right]}$ and a one-sample gradient estimate would have the form
${\displaystyle 
    \nabla f(x) \approx}$
    ${\displaystyle  \bigg(\frac{F(x+\delta u,\xi^+)-F(x,\xi)}{\delta}\bigg)
        \frac{(d+1)u}{(1+\lVert u \rVert^2)},
}$
for $\delta>0$ small. 
When this estimate is used in a stochastic approximation (SA) scheme, averaging would happen naturally. 
Thus, under Cauchy perturbation, one can consider the estimate of $\nabla f(x_k)$ for a given parameter value $x_k$ (that in turn would be updated as per an SA scheme as mentioned above) to be as follows:

\begin{flalign}
\label{eq4}
    G(x_k,\xi_k^+,\xi_k,u_k,\delta_k)
&\stackrel{\triangle}{=} {\displaystyle 
\bigg(\frac{F(x_k+\delta_k u_k,\xi_k^+)-F(x_k,\xi_k)}{\delta_k}\bigg)\frac{(d+1)u_k}{1+\lVert u_k \rVert^2}}.
\end{flalign}
In the above estimates, $\{\xi_k\}$ and $\{\xi_k^+\}$ constitute the measurement noise r.vs in the two simulations and are assumed to be i.i.d., having a common distribution, and further independent of one another.
\cref{alg:alg1} contains the details of the update procedure.
\paragraph{Motivation for truncation in Cauchy perturbations:} 
To get a bias bound for $G$ in \eqref{eq4}, one needs the expectation of $\frac{f(x+\delta_k u_k)-f(x_k)}{\delta_k}\frac{(d+1)u_k}{1+\lVert u_k \rVert^2}+\frac{\eta_k^+-\eta_k}{\delta_k}\frac{(d+1)u_k}{1+\lVert u_k \rVert^2}$, which can be obtained after a Taylor series expansion. However, this expectation only exists if the mean and covariance matrix of the perturbation distribution exist (see proof of \cref{def:lem2} in \cref{sec:appendix-asymp}). Thus we incorporate truncation in the random perturbation.

\section{Convergence Results}
\label{section-3}
\subsection{Asymptotic convergence}
Let $\mathcal{F}_k=\sigma(x_m, m\leq k; u_m, \xi_m^+, \xi_m, m < k)$, $k\geq 1$, denote a sequence of increasing sigma fields. Let $\eta_k^+=F(x_k+\delta_k u_k,\xi_k^+)-f(x_k+\delta_k u_k),\;\;
        \eta_k=F(x_k,\xi_k)-f(x_k)$,
respectively, where $\{\delta_k\}$ is a sequence of smoothing parameters that diminishes to zero as $k\to \infty$. It is easy to see that 
$\E[\eta_k^+-\eta_k|\mathcal{F}_k]=0$. 

We now make the following assumptions as in \cite{bhatnagar}.

\begin{assumption}
\label{def:ass3}
The step-size $\gamma_k$ and the smoothing parameter $\delta_k$ are positive for all $k$. Further, $\gamma_k, \delta_k \rightarrow 0$ as $k\to \infty$ and 
$\sum_k \gamma_k=\infty ,\ \sum_k\left(\frac{\gamma_k}{\delta_k}\right)^2 < \infty.$
\end{assumption}
\begin{assumption}
\label{def:ass4}
The function $f$ is three-times continuously differentiable with $\rVert\nabla^2 f(x)\lVert\leq B$ and $\vert \nabla^3_{i,j,k} f(x) \vert\leq B_1$ for $i,j,k=1,\ldots ,N$ where $B,B_1\geq0$

\end{assumption}
\begin{assumption}
\label{def:ass5}
There exist $\beta_1,\beta_2>0$ such that $\forall k \geq 0$, $\E\vert \eta_k \vert^2\leq \beta_1, \E\vert \eta_k^+ \vert^2\leq \beta_1 ,\ \E|f(x_k\pm\delta_ku_k)^2|\leq \beta_2$ and $\E|f(x_k)^2|\leq \beta_2$. 
\end{assumption}
\begin{assumption}
\label{def:ass6}
$\sup_k\lVert x_k \rVert<\infty$ almost surely.
\end{assumption}

The above assumptions are commonly used in the convergence analysis of an SA algorithm. In particular,
\cref{def:ass3} is a standard SA requirement on the  step-sizes.
 \cref{def:ass4} ensures that the associated ODE is well-posed and helps in establishing the asymptotic unbiasedness of the estimated gradient.
The conditions in \cref{def:ass5} ensure that the effect of noise can be ignored in asymptotic analysis of \cref{def:eq2}. 
\cref{def:ass6} is a stability assumption that ensures convergence of \cref{def:eq2} and is common to the analysis of simultaneous perturbation based SG algorithm cf.~ \cite{Spall92,bhatnagar}. If stability is hard to ensure, a common practice is to project the iterate sequence onto a compact and convex set, and such a scheme would fall under the realm of projected stochastic approximation.   

The following lemma characterizes the relationship between the estimator $G$ and the true gradient of the objective $f$.
\begin{lemma}
\label{def:lem2}
Under \ref{def:ass3}-\ref{def:ass6}, 
we have almost surely
\begin{align}
	\label{eq:grad-bias}
        \E[G(x_k,\xi_k^+,\xi_k,u_k,\delta_k)|\mathcal{F}_k]= c_2\nabla f(x_k)+\delta_k w_k,
\end{align}
where $c_2 = \E\left[\frac{(d+1)(u_k^1)^2}{1+\lVert u_k \rVert^2}\right]$, with $u_k^1$ denoting the first element of the random vector $u_k$, and \\$w_k=\E\left[\bigg(\frac{ u_k^T\nabla^2 f(\Bar{x}^+_k) u_k}{2}\bigg)\left.\frac{(d+1)u_k}{1+\lVert u_k \rVert^2}\right|\mathcal{F}_k\right]$. 
\end{lemma}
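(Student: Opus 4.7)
My plan is to substitute $F = f + \eta$ so as to split $G$ into a deterministic gradient-approximation part and a noise part, kill the noise via the tower property, apply a second-order Taylor expansion, and finally exploit the radial symmetry of the truncated Cauchy law to identify the leading matrix as a scalar multiple of the identity. Concretely, writing $F(x_k + \delta_k u_k, \xi_k^+) = f(x_k + \delta_k u_k) + \eta_k^+$ and $F(x_k, \xi_k) = f(x_k) + \eta_k$ in \cref{eq4}, I decompose
\begin{equation*}
G = \frac{f(x_k + \delta_k u_k) - f(x_k)}{\delta_k} \cdot \frac{(d+1) u_k}{1 + \lVert u_k \rVert^2} + \frac{\eta_k^+ - \eta_k}{\delta_k} \cdot \frac{(d+1) u_k}{1 + \lVert u_k \rVert^2}.
\end{equation*}
Conditioning first on $(\mathcal{F}_k, u_k)$, the multiplier $(d+1) u_k/(1+\lVert u_k \rVert^2)$ is measurable while $\E[\eta_k^\pm \mid \mathcal{F}_k, u_k] = 0$, since the measurement noises $\xi_k, \xi_k^+$ are independent of $u_k$ and mean-zero at any fixed query point; the tower property then kills the noise term in $\E[G \mid \mathcal{F}_k]$.

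Next, using Assumption \ref{def:ass4} I Taylor-expand to second order,
\begin{equation*}
f(x_k + \delta_k u_k) - f(x_k) = \delta_k u_k^T \nabla f(x_k) + \frac{\delta_k^2}{2} u_k^T \nabla^2 f(\bar{x}_k^+) u_k,
\end{equation*}
with $\bar{x}_k^+$ a point on the segment joining $x_k$ and $x_k + \delta_k u_k$. Dividing by $\delta_k$ and multiplying by $(d+1) u_k/(1+\lVert u_k \rVert^2)$ yields exactly the matrix term $\bigl[(d+1) u_k u_k^T/(1+\lVert u_k \rVert^2)\bigr]\nabla f(x_k)$ plus $\delta_k$ times the Hessian-quadratic-form expression whose conditional expectation is $w_k$ by definition. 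The remaining step is to show that $M := \E\bigl[(d+1) u_k u_k^T/(1+\lVert u_k \rVert^2) \bigm| \mathcal{F}_k\bigr]$ equals $c_2\, \mathbb{I}_{d\times d}$. Because the truncated Cauchy density in Definition \ref{def:def1} depends on $u$ only through $\lVert u \rVert$, it is invariant under coordinate sign-flips and permutations: the first symmetry forces every off-diagonal entry of $M$ to vanish, the second forces all diagonal entries to coincide, so $M = c_2\, \mathbb{I}_{d \times d}$ with the announced $c_2$. Reassembling the three pieces produces \eqref{eq:grad-bias}.

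I expect the main technical subtlety to be not any single step but rather justifying that all these expectations are genuinely finite: the untruncated Cauchy law has no moments, which is precisely why the paper restricts the perturbation to the $\delta$-sphere (see the ``Motivation for truncation'' remark preceding the lemma). With truncation in place $\lVert u_k \rVert \le \delta$ almost surely, so $(d+1) u_k u_k^T/(1+\lVert u_k \rVert^2)$ is bounded in operator norm and the remainder $(u_k^T \nabla^2 f(\bar{x}_k^+) u_k)\, u_k/(1+\lVert u_k \rVert^2)$ is bounded using $\lVert \nabla^2 f \rVert \le B$ from Assumption \ref{def:ass4}. Every conditional expectation and every application of Fubini/tower in the argument above is then legitimate.
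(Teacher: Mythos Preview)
Your proposal is correct and follows essentially the same route as the paper: decompose $G$ into a noise-free part plus a noise part that vanishes in conditional expectation, Taylor-expand the noise-free part to second order, and identify $\E\bigl[(d+1)u_ku_k^T/(1+\lVert u_k\rVert^2)\bigr]=c_2\mathbb{I}$ via the radial symmetry of the truncated Cauchy density. If anything, your symmetry argument (sign-flip for the off-diagonal, permutation for equality of the diagonal) is stated more carefully than the paper's; one small slip is that the $u_k$ actually used in the estimator is drawn from the unit-truncated Cauchy $h(\cdot)$, so $\lVert u_k\rVert\le 1$ rather than $\le\delta$, but this does not affect your boundedness conclusion.
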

\begin{proof}
See \cref{pf1}.
\end{proof}
Lemma \ref{def:lem2} does not bound the bias in the gradient estimator directly. This is because of the constant factor $c_2$ multiplying the gradient term on the RHS of \eqref{eq:grad-bias}. However, the result in \eqref{eq:grad-bias} is useful in establishing asymptotic convergence of Algorithm \ref{alg:alg1} as it tracks the following ODE 
\begin{equation}
\label{ode}
    \dot{x(t)} = -c_2\nabla f(x(t)).
\end{equation}
In fact for $c_2>1$, we will obtain faster convergence, 
see  \cref{eq:5} in \cref{sec:appendix-asymp} for a detailed argument.
\begin{theorem}
\label{th1}(Strong Convergence):
Assume \cref{def:ass3}-\cref{def:ass6} hold. The sequence $x_k,k\geq 1$ governed by \cref{def:eq2} satisfies 
\[x_k \to H\stackrel{\triangle}{=} \{x^*|\nabla f(x^*)=0\} \textrm{ a.s. as } k\to \infty.\]
\end{theorem}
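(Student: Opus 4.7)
The plan is to cast the recursion \cref{def:eq2} into the standard Robbins--Monro / Kushner--Clark form and then apply the ODE method to identify the limit set. First I would decompose the gradient estimator as
\begin{equation*}
G(x_k,\xi_k^+,\xi_k,u_k,\delta_k) = c_2\nabla f(x_k) + \delta_k w_k + M_{k+1},
\end{equation*}
where $M_{k+1} := G(x_k,\xi_k^+,\xi_k,u_k,\delta_k) - \E[G(x_k,\xi_k^+,\xi_k,u_k,\delta_k)\mid\mathcal{F}_k]$ is a martingale difference sequence by construction, and the conditional mean identity from \cref{def:lem2} supplies the deterministic drift $c_2\nabla f(x_k)$ and the bias $\delta_k w_k$. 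Substituting into \cref{def:eq2} yields
\begin{equation*}
x_{k+1} = x_k - \gamma_k c_2\nabla f(x_k) - \gamma_k\delta_k w_k - \gamma_k M_{k+1},
\end{equation*}
which is precisely the form to which standard SA convergence results apply.

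Next I would verify the three ingredients required by the ODE method. The drift $-c_2\nabla f$ is Lipschitz on compact sets by \cref{def:ass4}, and by \cref{def:ass6} the iterates are almost surely bounded, so the drift is effectively Lipschitz along the trajectory. The bias term is asymptotically negligible: since $u_k$ is supported on the $\delta$-sphere with $\|u_k\|\le 1$ after rescaling (or, in the expression for $w_k$, the factor $(d+1)u_k/(1+\|u_k\|^2)$ is bounded) and $\|\nabla^2 f\|\le B$, one checks that $w_k$ is uniformly bounded, so $\gamma_k\delta_k w_k\to 0$ and $\sum_k\gamma_k\|\delta_k w_k\|<\infty$ (in fact the product $\gamma_k\delta_k$ is summable or at least $o(\gamma_k)$ in the relevant sense). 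For the martingale term I would show $\sup_k\E[\|M_{k+1}\|^2\mid\mathcal{F}_k]\le C/\delta_k^2$ using \cref{def:ass5} together with the fact that the Cauchy perturbation is truncated to the $\delta$-sphere (which is the precise reason truncation was introduced, as flagged after \cref{eq4}). The hypothesis $\sum_k(\gamma_k/\delta_k)^2<\infty$ from \cref{def:ass3} then gives $\sum_k\gamma_k^2\E[\|M_{k+1}\|^2\mid\mathcal{F}_k]<\infty$, so by the martingale convergence theorem $\sum_k\gamma_k M_{k+1}$ converges almost surely.

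With these pieces in place, I would invoke the Kushner--Clark lemma (or equivalently Theorem 2 of Chapter 2 of Borkar) to conclude that the iterates $x_k$ almost surely track the flow of the limiting ODE
\begin{equation*}
\dot x(t) = -c_2\nabla f(x(t)),
\end{equation*}
stated already as \cref{ode}, and converge to a connected internally chain-transitive invariant set of this ODE. Finally, using $f$ itself as a Lyapunov function gives
\begin{equation*}
\frac{d}{dt}f(x(t)) = -c_2\|\nabla f(x(t))\|^2 \le 0,
\end{equation*}
with equality exactly on $H=\{x^*:\nabla f(x^*)=0\}$. LaSalle's invariance principle then forces the limit set to lie inside $H$, establishing $x_k\to H$ almost surely.

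I expect the main obstacle to be the bookkeeping on the martingale difference noise: one must bound $\E[\|M_{k+1}\|^2\mid\mathcal{F}_k]$ carefully, separating the two noise contributions $\eta_k^+-\eta_k$ (controlled by \cref{def:ass5}) and $f(x_k+\delta_k u_k)-f(x_k)$ (controlled via a Taylor expansion together with the boundedness of $x_k$). The truncation of the Cauchy distribution is crucial here because it ensures that the factor $(d+1)u_k/(1+\|u_k\|^2)$ has finite second moment, which is what converts the pointwise bound into the $C/\delta_k^2$ conditional second moment bound needed to close the argument against $\sum_k(\gamma_k/\delta_k)^2<\infty$.
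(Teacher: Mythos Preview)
Your proposal is correct and follows essentially the same route as the paper: decompose via \cref{def:lem2} into drift $-c_2\nabla f(x_k)$, vanishing bias $\delta_k w_k$, and martingale noise $M_{k+1}$; bound $\E\|M_{k+1}\|^2\le C/\delta_k^2$ using \cref{def:ass5} and the truncation, so that $\sum_k(\gamma_k/\delta_k)^2<\infty$ controls the noise; then invoke Kushner--Clark and a Lyapunov argument on the ODE \cref{ode}. The only cosmetic differences are that the paper checks the martingale condition via Doob's maximal inequality rather than the martingale convergence theorem, and bounds the $f$-difference term directly through the moment hypotheses in \cref{def:ass5} rather than via Taylor expansion plus \cref{def:ass6}.
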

\begin{proof}
See \cref{pf2}.
\end{proof}
The following variant of \cref{def:ass5} is required to prove the asymptotic normality of $x_k$.
\begin{assumption}
\label{def:ass8}
Assume  \cref{def:ass5} holds. In addition, there exists $\sigma'>0$  such that $\E(\eta_k^+-\eta_k)^2\to \sigma'^2$ a.s. as $k \to\infty$.
\end{assumption}
\begin{theorem}
\label{th2}
(Asymptotic Normality):
Assume \cref{def:ass3}--\cref{def:ass8} hold. Also, assume that the set $H$ in \cref{th1} is a singleton $H=\{x^*\}$. Let $\gamma_k=\gamma_0/k^\alpha$ and $\delta_k=\delta_0/k^\phi$, where $\gamma_0,\delta_0>0,\alpha\in(0,1]$ and $\phi\geq \alpha/6$. Furthermore, let $\upsilon=\alpha-2\phi>0$ and $Q$ be the orthogonal matrix with $Q\nabla^2f(x^*)Q^T=\gamma_0^{-1}$diag$(\lambda_1,\lambda_2,\ldots,\lambda_d)$, with $\lambda_1,\ldots,\lambda_d$ being the eigen-values of $\nabla^2f(x^*)$. Then,
$k^{\upsilon/2}(x_k-x^*)\stackrel{dist}{\to}\N(\mu,QMQ^T)$ as $k\to \infty$ a.s.,
where $\N(\cdot,\cdot)$ denotes a multi-variate Gaussian distribution with mean $\mu$ defined by
\[
    \mu=
    \begin{cases}
        0 & \text{if}\ 3\phi-\alpha/2>0,\\
        \Bar{c}(\gamma_0\delta_0^2(\gamma_0\nabla^2f(x^*)-\frac{1}{2}\upsilon^+\mathbb{I})^{-1}T) & \text{if}\ 3\phi-\alpha/2=0. 
    \end{cases}
    \]In the above, $\mathbb{I}$ is identity matrix,$\Bar{c}=\E[\lVert u \rVert^4]$ , 
    $ \upsilon^+=\upsilon$ if $\alpha=1$ and $0$ if $\alpha<1,\ T$ is a d-dimensional vector whose $i$-th element given by
$-\frac{1}{6}[\nabla^3_{iii}f(x^*)+3\sum_{j=1,j\neq i}\nabla^3_{jji}f(x^*)]$ and the matrix is defined as
$M=\frac{\gamma^2_0\sigma'^2}{4\delta^2_0}diag((2\lambda_1-\upsilon^+)^{-1},\ldots,(2\lambda_d-\upsilon^+)^{-1}).$ 
\end{theorem}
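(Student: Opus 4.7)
The plan is to cast the linearisation of the recursion \cref{def:eq2} around $x^{*}$ into the form required by the classical central limit theorem for stochastic approximation (of Fabian-type), and then to work out the limiting mean and covariance by exploiting the specific structure of the truncated Cauchy perturbation. First, using \cref{def:lem2} together with a second-order Taylor expansion of $\nabla f$ around $x^{*}$, I would write the update as
\[
x_{k+1}-x^{*} = \bigl(I-\gamma_k c_2 \nabla^2 f(x^{*})\bigr)(x_k-x^{*}) - \gamma_k M_{k+1} - \gamma_k \delta_k w_k + \gamma_k r_k,
\]
where $M_{k+1}=G(x_k,\xi_k^+,\xi_k,u_k,\delta_k)-\E[G\mid\mathcal{F}_k]$ is a martingale difference and $r_k=o(\lVert x_k-x^{*}\rVert)$ is a higher-order Taylor remainder that is eventually negligible by \cref{th1}, since $x_k\to x^{*}$ almost surely.

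Next, I would refine both the bias and the noise. For $\delta_k w_k$, Taylor-expanding $\nabla^2 f(\bar x_k^+)$ appearing in $w_k$ one more order around $x_k$ shows that the Hessian-level contribution vanishes by the central symmetry of the truncated Cauchy density (odd function of $u_k$), leaving a third-derivative contribution of order $\delta_k$. Carrying out the resulting fourth-order moment computation with the isotropic Cauchy law -- collecting the two index patterns $\{i,i,i,i\}$ and $\{i,i,j,j\}$, the latter weighted by $3$ on account of the three admissible pairings -- yields $\delta_k w_k=\delta_k^2 \bar c\, T(x_k)+o(\delta_k^2)$ with $T(x_k)\to T$ as $x_k\to x^{*}$. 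For the martingale noise, I would decompose $F(x_k+\delta_k u_k,\xi_k^+)-F(x_k,\xi_k) = [f(x_k+\delta_k u_k)-f(x_k)] + (\eta_k^+-\eta_k)$; squaring, using the conditional independence of $\eta_k^+$ and $\eta_k$, and applying \cref{def:ass8} then gives $\E[M_{k+1}M_{k+1}^T\mid\mathcal{F}_k] = (\sigma'^2/\delta_k^2)\,C + o(\delta_k^{-2})$, where $C$ is a scalar multiple of $I$ by the isotropy of $u_k$.

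With these expansions in hand, I would match the scaling assumptions. Setting $\gamma_k=\gamma_0 k^{-\alpha}$ and $\delta_k=\delta_0 k^{-\phi}$, the effective variance $\gamma_k^2\,\E[M_{k+1}M_{k+1}^T\mid\mathcal{F}_k]$ is of order $k^{-(2\alpha-2\phi)} = k^{-(\alpha+\upsilon)}$, precisely the rate that produces a non-degenerate limit on the scale $k^{\upsilon/2}$. The bias $\gamma_k\delta_k w_k$ has order $k^{-(\alpha+2\phi)}$; equating this with the critical rate $k^{-(\alpha+\upsilon/2)} = k^{-(3\alpha/2-\phi)}$ for the bias to contribute to the limiting mean gives the condition $3\phi = \alpha/2$, which explains the dichotomy in the statement. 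The positive-definiteness hypothesis on $2\Gamma-\upsilon^+\mathbb{I}$, with $\Gamma = c_2\nabla^2 f(x^{*})$, follows because $x^{*}$ is a local minimum, which has been established earlier in the paper via the avoidance of unstable equilibria.

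Finally, diagonalising by the orthogonal matrix $Q$ decouples the Lyapunov equation that governs the asymptotic covariance into $d$ scalar equations, yielding $M=\frac{\gamma_0^2\sigma'^2}{4\delta_0^2}\,\mathrm{diag}\bigl((2\lambda_i-\upsilon^+)^{-1}\bigr)$, and rotating back gives $QMQ^T$. For the mean, in the sub-critical regime $3\phi>\alpha/2$ the bias decays faster than the CLT scale, so $\mu=0$; in the critical regime $3\phi=\alpha/2$, solving the fixed-point equation for the deterministic bias accumulation of the linearised recursion yields the stated expression involving $(\gamma_0\nabla^2 f(x^{*}) - \tfrac{1}{2}\upsilon^+\mathbb{I})^{-1}T$. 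The main obstacle I expect is the bias analysis -- specifically, verifying the symmetry-based cancellation that reduces $w_k$ from being $O(1)$ to $O(\delta_k)$ and extracting the correct third-derivative coefficient $T$ with its precise combinatorial factors $1$ and $3$, since these determine both whether $\mu$ vanishes and its explicit form in the critical case.
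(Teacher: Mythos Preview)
Your approach is correct in substance, but it is far more detailed than the paper's own proof. The paper disposes of the theorem in essentially one line: it invokes Proposition~1 of \cite{chin1997comparative} (a general asymptotic-normality result for simultaneous-perturbation SG schemes, itself built on Fabian's CLT) after recording the two moment facts
\[
\E\!\left[\frac{(u_k^i)^4}{1+\lVert u\rVert^2}\right]\le \E[\lVert u\rVert^4],\qquad
\E\!\left[\frac{uu^T}{1+\lVert u\rVert^2}\right]\preceq \mathbb{I},
\]
which are all that is needed to plug the truncated-Cauchy perturbation into Chin's framework. What you have written is effectively a reconstruction of that cited proposition from first principles: the linearisation around $x^*$, the martingale/bias decomposition, the symmetry-based cancellation that upgrades $\delta_k w_k$ from $O(\delta_k)$ to $O(\delta_k^2)$ and produces the vector $T$, the rate-matching that gives the dichotomy at $3\phi=\alpha/2$, and the Lyapunov-equation solution for $M$ after diagonalising by $Q$. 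Your route is self-contained and makes transparent where the constants $\bar c$, $T$, and the factor $\sigma'^2/(4\delta_0^2)$ come from; the paper's route buys brevity by outsourcing all of that to the citation. One small correction: you justify the positive-definiteness condition $2\lambda_i>\upsilon^+$ by appealing to the avoidance-of-unstable-equilibria result, but that proposition is proved \emph{after} Theorem~\ref{th2} and under the extra Assumption~\ref{def:ass9}; in the paper (and in Chin/Fabian) this spectral condition is simply a standing hypothesis, not a consequence of earlier results.
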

\begin{proof}
See \cref{pf:th2}.
\end{proof}
\begin{remark}
\label{Hstar}
For the case of general $H$ (not necessarily singleton as in the statement of \cref{th2}, the result will continue to hold for the particular $x^*$ in whose neighborhood the parameter lies after a sufficiently large number of iterations.
\end{remark}
From \cref{th2} we can say that $ k^{\upsilon/2}(x_k-x^*)$ is asymptotically Gaussian for TCSF algorithm. In addition, the maximum possible value of $\upsilon=2/3$ can be obtained by fixing $\alpha=1$ and $\phi=1/6$ in $\upsilon=\alpha-2\phi$. Fixing $\upsilon=2/3$, we obtain the best possible asymptotic convergence rate of $k^{-1/3}$. 
We define the asymptotic mean square error (AMSE), cf.~\cite{Spall92}, by
    $AMSE(\gamma_0,\delta_0)=\mu^T\mu+trace(QMQ^T)$,
where $\gamma_0,\delta_0$ are constant step-size and smoothing constant respectively. It can be shown under the condition given in \cite{gerencser99} that $AMSE(\gamma_0,\delta_0)$ coincides with $k^\upsilon\E\lVert x_k-x^*\rVert^2$.

We now compare the AMSE of our algorithm with two other well-known random perturbation gradient estimation schemes, namely GSF and SPSA 

To make the comparison fair, we follow the approach of  \cite{chin1997comparative} and set $\gamma_k$ and $\delta_k$ uniformly. 
In particular,  we use step-size $\gamma_k=\gamma_0/k$, where $\gamma_0\geq \upsilon/2\lambda_0$, with $\lambda_0$ denoting the minimum eigenvalue of $\nabla^2 f(x^*)$. Further, we set $\delta_k=\frac{\delta_0}{k^{1/6}}$. It can then be shown that
\begin{equation}
    AMSE(\gamma_0,\delta_0)=\Big(\bar{c}\delta^2_0\gamma_0\lVert\Phi T\rVert\Big)^2+\frac{1}{\delta^2_0}trace(\Phi P),\label{eq:amse-general}
\end{equation}
where  $T$ is as defined in \cref{th2}, 
$\Phi=(\gamma_0\nabla^2f(x^*)-\frac{1}{2}\upsilon^+\mathbb{I})^{-1}$ and $P=\frac{\sigma'^2}{4}\mathbb{I}$. 

\begin{remark}[Comparing with GSF]
For $u_k$ following $\N(0,\I)$, we have $\E(u_k^i)^4=3$. 
Hence, the ratio of AMSE of GSF with that of TCSF algorithm is given by
\begin{flalign*}
\frac{AMSE_{GSF}(\gamma_0,\delta_0)}{AMSE_{TCSF}(\gamma_0,\delta_0)}
&=\frac{\Big(3\delta^2_0\gamma_0\lVert\Phi T\rVert\Big)^2+\frac{1}{\delta^2_0}trace(\Phi P)}{\Big(\Bar{c}\delta^2_0\gamma_0\lVert\Phi T\rVert\Big)^2+\frac{1}{\delta^2_0}trace(\Phi P)}.
\end{flalign*}
where $\Phi,\ P$ $T$ are specified in \eqref{eq:amse-general}. 
Recall that $\Bar c = \E[\lVert u \rVert^4]$, and $u$ is restricted to the unit sphere, implying $\Bar c \le 1$. Thus, the AMSE of TCSF is clearly better than that of GSF.
\end{remark}

\begin{remark}[Comparing with SPSA]
Comparing the AMSE of TCSF with SPSA, we obtain
\begin{flalign*}
\frac{AMSE_{SPSA}(\gamma_0,\delta_0)}{AMSE_{TCSF}(\gamma_0,\delta_0)}
&=\frac{\Big(\delta^2_0\gamma_0\lVert\Phi T\rVert\Big)^2+\frac{1}{\delta^2_0}trace(\Phi P)}{\Big(\Bar{c}\delta^2_0\gamma_0\lVert\Phi T\rVert\Big)^2+\frac{1}{\delta^2_0}trace(\Phi P)} \ge 1.
\end{flalign*}
The last inequality follows from the fact that $\Bar c \le 1$. 
Thus, the AMSE of TCSF is at least as good as that of SPSA, and would be better if $\Bar c<1$.  For the case of $d=1$, $\bar c < 1$, as shown in \cite{staneski}. On the other hand, it is difficult to obtain a closed-form expression for $\Bar c$ when $d>1$.
\end{remark}

From the analysis above, it is apparent that, from an asymptotic convergence rate viewpoint, our algorithm outperforms GSF and SPSA, which are two popular gradient estimation schemes. \\
So far, we have provided theoretical guarantees that establish convergence to a stationary point of the objective function $f$. However, this result is not sufficient in a non-convex optimization setting since local maxima and saddle points are also stationary points in addition to local minima. The aim here is to converge to a local minimum, or avoid traps such as saddle points/local maxima. A usual trick to achieve this objective is to add extraneous noise, so that the algorithm does not converge to an unstable equilibrium. We now establish that our algorithm naturally avoids traps owing to the noise in the gradient estimator. For this result, we require the following additional assumption.
\begin{assumption}
\label{def:ass9}
Assume the condition in \cref{def:ass8} holds and $c_9$ is such that $\E|\eta_k^+-\eta_k|\geq c_9$.
\end{assumption}
The assumption above ensures that the noise is rich in all directions. Note that, one can rewrite the update rule \cref{def:eq2} as
\begin{equation}
\label{def:eq3}
x_{k+1} =x_{k}-\gamma_kG (x_k,u_k,\delta_k)-\epsilon_k,
\end{equation}
with
${\displaystyle G (x_k,u_k,\delta_k)=\frac{f(x_k+\delta_k u_k)-f(x_k)}{\delta_k}\frac{(d+1)u_k}{1+\lVert u_k \rVert^2}}$ and ${\displaystyle \epsilon_k=\gamma_k\frac{\eta_k^+-\eta_k}{\delta_k}\frac{(d+1)u_k}{1+\lVert u_k \rVert^2}}$.\\
Here $\E[\epsilon_k|\mathcal{F}_k]=0$. Note that \cref{def:eq3} is equivalent to \cref{def:eq4} in \cref{sec:appendix-asymp} by considering $-\epsilon_k=\mu_k$ and $-G =Y$. 
One can  ensure avoidance of traps if the increment of $\epsilon_k$ in any direction is of order $1/n^\gamma$, i.e., $\displaystyle\E\left[(\epsilon_k \cdot \theta)^+|\mathcal{F}_k\right]\geq c_7/k^\gamma$ (\cref{Them:1}) for every unit vector $\theta$. We establish that our algorithm satisfies $\displaystyle\E\left[(\epsilon_k \cdot \theta)^+|\mathcal{F}_k\right]\geq c_9c_{10}/2$ with $c_9 = \mathcal{O}(1/k^\gamma)$ (see (\cref{pf3}) for details).
\begin{proposition}
\label{prop}
Under \cref{def:ass3}-\cref{def:ass9}, $x_k$ generated by \cref{alg:alg1}, converges to a local minimum a.s.
\end{proposition}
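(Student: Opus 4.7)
The plan is to combine \cref{th1} (which places the limit of $x_k$ in the stationary set $H = \{x : \nabla f(x) = 0\}$) with the avoidance-of-traps criterion of \cite{pemantle} to rule out every non-minimum stationary point. The algorithm tracks the ODE $\dot{x}(t) = -c_2 \nabla f(x(t))$ with $c_2 > 0$, so the Jacobian at an equilibrium $x^*$ is $-c_2 \nabla^2 f(x^*)$. Local minima are precisely the (asymptotically) stable equilibria, while local maxima and (hyperbolic) saddle points have a Jacobian with at least one positive eigenvalue and are therefore unstable; the $C^3$ regularity required by Pemantle's hypotheses follows from \cref{def:ass4}.

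To invoke \cite{pemantle}, I would use the decomposition in \cref{def:eq3}, which writes the recursion with martingale-difference noise $-\epsilon_k$ satisfying $\E[\epsilon_k | \mathcal{F}_k] = 0$. Pemantle's sufficient condition for avoidance of an unstable equilibrium is the noise-richness bound $\E[(\epsilon_k \cdot \theta)^+ | \mathcal{F}_k] \geq c_7 / k^\gamma$ for every unit vector $\theta$ and a suitable exponent $\gamma$ compatible with the step-size decay. The main technical step is to verify this inequality. Writing $\epsilon_k \cdot \theta = \frac{\gamma_k (d+1)}{\delta_k} Z U$ with $Z := \eta_k^+ - \eta_k$ and $U := (u_k \cdot \theta)/(1 + \lVert u_k \rVert^2)$, I would use two symmetry facts: conditional on $\mathcal{F}_k$, $Z$ is independent of $u_k$ and symmetric about zero (as $\eta_k^+, \eta_k$ are i.i.d.), while the truncated Cauchy density is symmetric and isotropic, so $U$ is symmetric with a law that does not depend on the direction $\theta$. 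The product $Z U$ is then symmetric, yielding $\E[(ZU)^+ | \mathcal{F}_k] = \tfrac{1}{2} \E|Z| \cdot \E|U|$; \cref{def:ass9} supplies $\E|Z| \geq c_9 > 0$ and isotropy produces $\E|U| = c_{10} > 0$ independent of $\theta$. Combining,
\[
\E[(\epsilon_k \cdot \theta)^+ | \mathcal{F}_k] \geq \frac{(d+1)\, \gamma_k\, c_9 c_{10}}{2\, \delta_k},
\]
which under $\gamma_k = \gamma_0/k^\alpha$, $\delta_k = \delta_0/k^\phi$ scales as $\Theta(k^{-(\alpha - \phi)})$, falling in the admissible regime of Pemantle's theorem under the parameter choices permitted by \cref{def:ass3}.

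The main obstacle I anticipate is matching the decay exponent $\alpha - \phi$ against Pemantle's admissible $\gamma$, and handling the fact that the set of unstable equilibria in $H$ need not be countable. For the latter, I would combine \cref{def:ass6} with continuity of $\nabla^2 f$ to restrict attention to a compact region, cover the unstable part of $H$ by finitely many neighborhoods on which Pemantle's local avoidance result applies, and then take a union bound showing each such neighborhood is visited only finitely often. Together with \cref{th1}, this forces $x_k$ to converge almost surely to a point in $H$ that must be a local minimum.
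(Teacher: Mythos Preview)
Your proposal follows essentially the same route as the paper: rewrite the recursion in the form \cref{def:eq3}, then verify the hypotheses of Pemantle's avoidance theorem (\cref{Them:1}), with the key step being the noise-richness lower bound on $\E[(\epsilon_k\cdot\theta)^+\mid\mathcal{F}_k]$ obtained from the symmetry of the perturbation and \cref{def:ass9}. Your treatment is in fact a bit more careful than the paper's own proof---you use isotropy of the truncated Cauchy law to handle \emph{all} unit vectors $\theta$ (the paper only checks coordinate directions), you explicitly track the $\gamma_k/\delta_k$ scaling, and you flag the exponent-matching and non-isolated-equilibria issues that the paper passes over; one caveat is that your claimed independence of $Z=\eta_k^+-\eta_k$ from $u_k$ is not immediate since $\eta_k^+$ depends on $u_k$ through the evaluation point, though the paper implicitly makes the same factorisation.
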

\begin{proof}
See \cref{pf3}.
\end{proof}
\begin{remark}
From \cref{prop} we can justify that \cref{def:eq2} avoids saddle points and local maxima. To the best of our knowledge, a similar result is not available for the GSF algorithm. The latter algorithm has been shown to converge to a stationary point in \cite{bhatnagar}, and a non-asymptotic convergence rate for the same is available in \cite{gadimi}. In \cite{Rong15}, \cite{chi17}, the authors suggest adding extraneous noise to avoid traps for a SG algorithm. In contrast, we show that a noisy gradient estimation scheme would naturally avoid traps, obviating the need for extraneous noise addition.
\end{remark}

\subsection{Non-asymptotic convergence}

The non-asymptotic analysis below establishes convergence to an approximate stationary point as with \cite{gadimi,Bhavsar2022}. 

\begin{definition}
\label{def:def9}
For a non-convex function $f$, $\bar{x}$ is said to be an $\epsilon$-stationary point to the  problem \cref{def:eq1} if it satisfies $\E\left[\left\lVert\nabla f(\bar{x})\right\rVert^2\right]\leq\epsilon$.
\end{definition}
For non-asymptotic analysis we make the following assumption.
\begin{assumption}
	\label{def:ass1}
	Let $ \sigma^2>0$ such that
 	$\E_\xi[\lVert \nabla F(x, \xi) - \nabla f(x)\rVert^2] \leq \sigma^2$,   
 	 for all $x\in \R^d$.
\end{assumption}
\begin{assumption}
\label{def:ass2}
The function $f:\R^d\to\R$ is differentiable and the gradient of $f$ is Lipschitz continuous, i.e., $\left\lVert\nabla f(x) - \nabla f(y) \right\rVert \leq L\left\lVert x-y\right\rVert$, $\forall x,y\in\R^d$, where $L>0$ is the  Lipschitz constant. 
\end{assumption}

We now present a non-asymptotic bound for a randomized variant of TCSF algorithm in the spirit of \cite{gadimi}. 
\begin{theorem}
\label{coro2}
Assume \cref{def:ass5}, \cref{def:ass1} and \cref{def:ass2}.
Suppose Algorithm \ref{alg:alg1} has the following as the step-sizes and smoothing parameters:
\[    \gamma_k \stackrel{\triangle}{=}\min\left\{\dfrac{c_2}{L},\dfrac{1}{N^{2/3}}\right\}\ , \
    \delta_k\stackrel{\triangle}{=}\delta=\dfrac{1}{N^{1/6}}, 
k=1,\ldots,N,\] where $c_2$ is as specified in Lemma \ref{def:lem2}. Let $x_R$ be picked uniformly at random from $\{x_1,\ldots,x_N\}$.
Then
\begin{equation*}
\begin{split}
    \E\left[\left\lVert\nabla f(x_R)\right\rVert^2\right]&\leq 2D\Big(\frac{L}{c_2N}+ \frac{1}{N^{1/3}}\Big)+\frac{2Bc_2'}{c_2N^{1/6}}+\frac{L}{2c_2}\left(\frac{2Bc_2'}{N^{5/6}} +\frac{dc_2'^2}{N}+\frac{C''}{N^{1/3}}\right),
\end{split}
\end{equation*}
where $C''=\E\left[\frac{\lVert u_k\rVert^2(d+1)^2}{(1+\lVert u_k\rVert^2)^2}\right]\big[2(\beta_1+\beta_2)\big]$ , $c_2'=(d+1)B$ and $B$ is as specified in \cref{def:ass4}.
\end{theorem}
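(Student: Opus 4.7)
The plan is to follow the standard Ghadimi--Lan style non-asymptotic analysis, but adapted to the biased, truncated-Cauchy gradient oracle characterized in Lemma~\ref{def:lem2}. I would start from the $L$-smoothness descent inequality (Assumption~\ref{def:ass2}) applied along the update $x_{k+1}=x_k-\gamma_k G_k$, namely
\[
f(x_{k+1}) \le f(x_k) - \gamma_k \langle \nabla f(x_k), G_k\rangle + \tfrac{L\gamma_k^2}{2}\|G_k\|^2,
\]
and take conditional expectation given $\mathcal{F}_k$. Using Lemma~\ref{def:lem2}, $\E[G_k\mid\mathcal{F}_k]=c_2\nabla f(x_k)+\delta_k w_k$, so the linear term contributes $-\gamma_k c_2 \|\nabla f(x_k)\|^2 - \gamma_k\delta_k \langle \nabla f(x_k), w_k\rangle$. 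The bias remainder $w_k$ is bounded uniformly using the Hessian bound from Assumption~\ref{def:ass4}; a short calculation (exactly as in the derivation of $c_2'$) shows $\|w_k\|\le c_2'/2$ with $c_2'=(d+1)B$. I would then apply Cauchy--Schwarz together with Young's inequality to the cross term to obtain an $\O(\gamma_k\delta_k^2)$ bias contribution and a $\|\nabla f(x_k)\|^2$ piece that can be partially absorbed into $\gamma_k c_2\|\nabla f(x_k)\|^2$ on the left.

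Next I would bound $\E[\|G_k\|^2\mid\mathcal{F}_k]$. The natural split is $G_k=G_k^{\mathrm{det}}+G_k^{\mathrm{noise}}$, where $G_k^{\mathrm{det}}$ uses $f(x_k+\delta_k u_k)-f(x_k)$ and $G_k^{\mathrm{noise}}$ uses $\eta_k^+-\eta_k$. Since $\{u_k\}$ is independent of the measurement noises, $\E[\eta_k^+-\eta_k\mid u_k,\mathcal{F}_k]=0$, which kills the cross term and leaves $\E[\|G_k\|^2\mid\mathcal{F}_k]=\E[\|G_k^{\mathrm{det}}\|^2\mid\mathcal{F}_k]+\E[\|G_k^{\mathrm{noise}}\|^2\mid\mathcal{F}_k]$. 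For $G_k^{\mathrm{det}}$, a Taylor expansion of $f$ to second order around $x_k$ yields three pieces: a $\|\nabla f(x_k)\|^2$ piece (to be absorbed by the step-size condition $\gamma_k\le c_2/L$), an $\O(\delta_k)$ cross piece controlled by $B$ that produces the $2Bc_2'/N^{5/6}$ term, and an $\O(\delta_k^2)$ Hessian piece that produces the $dc_2'^2/N$ term. For $G_k^{\mathrm{noise}}$, Assumption~\ref{def:ass5} gives $\E[(\eta_k^+-\eta_k)^2]=\O(\beta_1+\beta_2)$, and using independence of $u_k$ and $\xi_k^\pm$ one obtains a bound of the form $C''/\delta_k^2$ with the prefactor $\E[\|u_k\|^2(d+1)^2/(1+\|u_k\|^2)^2]$ exactly as defined in the statement.

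Combining the descent step with these bounds, choosing $\gamma_k\le c_2/L$ to absorb the $L\gamma_k^2\|\nabla f\|^2$ contribution into $\tfrac{c_2\gamma_k}{2}\|\nabla f(x_k)\|^2$ on the left, I would obtain a per-iteration inequality of the form $\tfrac{c_2\gamma_k}{2}\E\|\nabla f(x_k)\|^2 \le \E[f(x_k)-f(x_{k+1})] + \text{(bias)} + \tfrac{L\gamma_k^2}{2}\text{(variance)}$. Summing from $k=1$ to $N$ telescopes the $f$-terms to $D:=f(x_1)-f^*$, and since $x_R$ is drawn uniformly from $\{x_1,\dots,x_N\}$, dividing by $Nc_2\gamma/2$ yields $\E[\|\nabla f(x_R)\|^2]$ on the left. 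Finally, substituting $\gamma=\min\{c_2/L,N^{-2/3}\}$, $\delta=N^{-1/6}$, and using $1/\min(a,b)\le 1/a+1/b$ to split the initial-value term into the two pieces $2DL/(c_2 N)$ and $2D/N^{1/3}$, produces precisely the stated bound term by term.

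The main obstacle I anticipate is the bookkeeping in the variance bound, in particular verifying that the $\|\nabla f(x_k)\|^2$ contribution coming from the Taylor expansion of $G_k^{\mathrm{det}}$ has a coefficient small enough to be absorbed under $\gamma_k\le c_2/L$; this relies on the rotational symmetry of the truncated Cauchy distribution to identify $\E[\langle\nabla f(x_k),u_k\rangle^2 (d+1)^2\|u_k\|^2/(1+\|u_k\|^2)^2]$ with a manageable multiple of $\|\nabla f(x_k)\|^2$, and on the vanishing of odd moments of $u_k$ to eliminate the Taylor cross term in expectation. Once this absorption is justified, the remaining work is purely algebraic matching of constants.
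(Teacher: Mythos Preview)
Your plan follows the same Ghadimi--Lan skeleton as the paper: apply the $L$-smoothness descent inequality, invoke Lemma~\ref{def:lem2} for the conditional mean of $G_k$, bound the second moment of $G_k$, telescope, and specialize $\gamma,\delta$. That is exactly what the paper does, so structurally you are on target.

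There are, however, two places where your decomposition diverges from the paper's and would not reproduce the stated bound verbatim. First, for the bias cross term $-\gamma_k\delta_k\langle\nabla f(x_k),w_k\rangle$ the paper does \emph{not} use Young's inequality; it simply bounds $|\langle\nabla f(x_k),w_k\rangle|$ by $B\cdot c_2'$ using $\|\nabla f(x_k)\|\le B$ (the gradient bound implicit in Assumption~\ref{def:ass4}). This is what produces the $2Bc_2'/(c_2 N^{1/6})$ term of order $\delta$. Your Young's-inequality route would give an $O(\delta^2)=O(N^{-1/3})$ bias contribution instead, which is sharper but not the constant in the statement. Second, for $\E\|G_k\|^2$ the paper does not Taylor-expand $G_k^{\mathrm{det}}$; it writes $\E\|G_k\|^2\le\|\E G_k\|^2+\E\|M_k\|^2$ and then uses the crude bound $\E\|M_k\|^2\le C''/\delta^2$ from the martingale estimate, where $C''$ already absorbs the \emph{function-value} bound $\beta_2$ from Assumption~\ref{def:ass5}. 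Your Taylor-based route would produce a noise term involving only $\beta_1$, not $\beta_1+\beta_2$, so again you would land on different (tighter) constants than those in $C''$. A small related slip: $\eta_k^+$ depends on $u_k$ through the evaluation point $x_k+\delta_k u_k$, so ``independence of $u_k$ and $\xi_k^\pm$'' is not the right justification; the correct statement is $\E[\eta_k^+-\eta_k\mid\mathcal F_k,u_k]=0$, which still kills the cross term.

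In short: your argument is correct and would yield a valid non-asymptotic bound of the same order; it is essentially the paper's approach with two local refinements (Young on the bias, Taylor on the variance) that change constants but not the method.
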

\begin{proof}
See \cref{pf:coro2}.
\end{proof}
From \cref{coro2}, we can conclude that the non-asymptotic bound is $\mathcal{O}(N^{-1/6})$, that is weaker than the asymptotic convergence rate of $\mathcal{O}(N^{-1/3})$ obtained in the previous section. This gap can however be fixed by using a balanced estimator, i.e.
\begin{equation}
\label{balanced}
    \begin{split}
        \tilde G(x_k,\xi_k^+,\xi_k,u_k,\delta_k)&
\stackrel{\triangle}{=} {\displaystyle 
\bigg(\frac{F(x_k+\delta_k u_k,\xi_k^+)-F(x_k-\delta_k u_k,\xi_k^-)}{2\delta_k}\bigg)\frac{(d+1)u_k}{1+\lVert u_k \rVert^2}}.
    \end{split}
\end{equation}
In this case, we obtain the following bound  (see \cref{prop3} in \cref{appendix-D} for details):
\[\E[\tilde G(x_k,\xi_k^+,\xi_k,u_k,\delta_k)|\mathcal{F}_k]=c_2\nabla f(x_k)+ \mathcal{O}(\delta^2_k) \mathbf{1}_{d}.\]
Notice that the bound is  $\mathcal{O}(\delta_k^2)$ instead of $\mathcal{O}(\delta_k)$ in the one-sided estimator \eqref{eq4}, since the second-order terms cancel in the balanced estimator. With a $\mathcal{O}(\delta_k^2)$ bias bound, we obtain in the following theorem a non-asymptotic bound of $\mathcal{O}(N^{-1/3})$ using the balanced estimator. From this bound it is observed that $\mathcal{O}(1/\epsilon^3)$ number of iterations of the TCSF algorithm guarantee convergence to an $\epsilon$-stationary point of \cref{def:eq1}.
\begin{theorem}
\label{prop4}
Assume \cref{def:ass5}, \cref{def:ass1} and \cref{def:ass2}.
Suppose Algorithm \ref{alg:alg1} is running with step-sizes $\gamma_k$ and $\tilde G(x_k,\xi_k^+,\xi_k,u_k,\delta_k)$ instead of $G(x_k,\xi_k^+,\xi_k,u_k,\delta_k)$. The smoothing parameter $\delta_k=\delta,\forall k$, and step-sizes $\gamma_k$ is chosen as defined in \cref{coro2}.
Let $x_R$ denote a point picked uniformly at random from $\{x_1,\ldots,x_N\}$.
Then under probability distribution \cref{eq98}
\begin{equation*}
\begin{split}
    \E\left[\left\lVert\nabla f(x_R)\right\rVert^2\right]&\leq \Big(\frac{2DL}{c_2N}+ \frac{2D}{N^{1/3}}\Big)+\frac{2Bc_2''}{c_2N^{1/3}}+\frac{L}{2c_2}\left(\frac{2Bc_2''}{N} +\frac{dc_2''^2}{N^{4/3}}+\frac{C''}{N^{1/3}}\right),
\end{split}
\end{equation*}
where $C''$ is as defined in \cref{martingle}, $c_2''$ is defined \cref{prop3} and $B$ is the same as in \cref{def:ass4}.
\end{theorem}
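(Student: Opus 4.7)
The plan is to follow the standard smooth-nonconvex SGD template, adapted to the balanced Cauchy-smoothed estimator, and mirror the structure of the proof of \cref{coro2}, but with the improved $\mathcal{O}(\delta_k^2)$ bias from \cref{prop3} replacing the $\mathcal{O}(\delta_k)$ bias used there. I would begin with the descent inequality implied by \cref{def:ass2}: writing $\tilde G_k = \tilde G(x_k,\xi_k^+,\xi_k,u_k,\delta_k)$ and using $x_{k+1} = x_k - \gamma_k \tilde G_k$,
\begin{equation*}
f(x_{k+1}) \leq f(x_k) - \gamma_k \langle \nabla f(x_k), \tilde G_k\rangle + \frac{L\gamma_k^2}{2}\lVert \tilde G_k\rVert^2.
\end{equation*}
Taking conditional expectation with respect to $\mathcal{F}_k$ and invoking \cref{prop3}, the cross term yields $-c_2 \gamma_k \lVert \nabla f(x_k)\rVert^2$ plus a bias contribution of order $\gamma_k \delta_k^2 \lVert \nabla f(x_k)\rVert$, which I would bound by Young's inequality to absorb a small multiple of $\lVert \nabla f(x_k)\rVert^2$ into the LHS and leave a residual $\mathcal{O}(\delta_k^4)$ term.

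The key new technical step is a clean second-moment bound for the balanced estimator. I would decompose
\begin{equation*}
\tilde G_k = \underbrace{\frac{f(x_k+\delta_k u_k) - f(x_k-\delta_k u_k)}{2\delta_k}\,\frac{(d+1)u_k}{1+\lVert u_k\rVert^2}}_{\text{bias part}} + \underbrace{\frac{\eta_k^+ - \eta_k^-}{2\delta_k}\,\frac{(d+1)u_k}{1+\lVert u_k\rVert^2}}_{\text{noise part}},
\end{equation*}
handle the first summand via a Taylor expansion around $x_k$ (using $\lVert \nabla^2 f\rVert \le B$ from \cref{def:ass4}) to produce a contribution bounded by $c_2^2 \lVert \nabla f(x_k)\rVert^2 + \mathcal{O}(d\,c_2''{}^2)$, and handle the second summand with \cref{def:ass5} to produce a $C''/\delta_k^2$ contribution, where $C''$ is exactly the constant already named in the statement. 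Combining, $\E[\lVert \tilde G_k\rVert^2|\mathcal{F}_k]$ is controlled by $c_2^2 \lVert \nabla f(x_k)\rVert^2 + 2B c_2'' \delta_k^2 + d c_2''{}^2 \delta_k^4 + C''/\delta_k^2$, up to constants.

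The final step is to telescope $\sum_{k=1}^{N}$ the resulting inequality, use $f(x_{N+1}) \ge f^*$ with $D := f(x_1)-f^*$, and rearrange to
\begin{equation*}
\frac{c_2}{2}\cdot\frac{1}{N}\sum_{k=1}^N \E\lVert \nabla f(x_k)\rVert^2 \leq \frac{D}{\gamma N} + \frac{L\gamma}{2}\cdot(\text{variance bound}) + (\text{bias bound}),
\end{equation*}
after which the uniform random index $x_R$ converts the arithmetic average on the left into $\E\lVert \nabla f(x_R)\rVert^2$. Plugging $\gamma = \min\{c_2/L,\,1/N^{2/3}\}$ splits the first term into $2DL/(c_2 N) + 2D/N^{1/3}$, and choosing $\delta = 1/N^{1/6}$ so that $\delta^2 = 1/N^{1/3}$ and $1/\delta^2 = N^{1/3}$ makes every residual term line up exactly with the stated bound: the bias contribution $2B c_2''/\delta^{-2} \cdot (1/c_2)$ gives $2B c_2''/(c_2 N^{1/3})$, and the variance contribution multiplied by $L\gamma/(2c_2) \le L/(2c_2 N^{2/3})$ produces the final three terms $2Bc_2''/N$, $dc_2''{}^2/N^{4/3}$, and $C''/N^{1/3}$. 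The main obstacle is the second-moment bound: the balanced estimator inflates the measurement-noise variance by $1/\delta^2$, and one must keep the $\mathcal{O}(\delta^2)$ bias from \cref{prop3} separate from the $\mathcal{O}(1/\delta^2)$ noise term so that the optimal choice $\delta = N^{-1/6}$ balances bias² and variance to give the advertised $N^{-1/3}$ rate rather than the $N^{-1/6}$ rate of \cref{coro2}.
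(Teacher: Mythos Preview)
Your proposal is essentially the same as the paper's proof: the paper establishes the $\mathcal{O}(\delta^2)$ bias bound for the balanced estimator in \cref{prop3}, invokes the second-moment bound $\E[\lVert \tilde G\rVert^2]\le \lVert \E[\tilde G]\rVert^2 + C''/\delta^2$ from \cref{martingle}, and then literally says ``the rest proof is similar as \cref{coro2}'' --- i.e., descent lemma, telescoping, and plugging in $\gamma=\min\{c_2/L,N^{-2/3}\}$ and $\delta=N^{-1/6}$ exactly as you outline.

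One small inconsistency in your outline: you first say you will handle the bias cross-term $\gamma_k\langle\nabla f(x_k),\,c_2''\delta^2\mathbf{1}_d\rangle$ via Young's inequality, leaving an $\mathcal{O}(\delta^4)$ residual, but in your final accounting you instead use the direct bound $|\langle\nabla f(x_k),\tau\mathbf{1}_d\rangle|\le B\tau$ with $\tau=c_2''\delta^2$, which is what actually produces the $2Bc_2''/(c_2 N^{1/3})$ term in the statement. The paper uses the direct bound throughout (it relies on $\lVert\nabla f(x_k)\rVert\le B$), not Young's inequality; stick with that to match the stated constants.
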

\begin{proof}
See \cref{pf:th4}
\end{proof}

For the non-asymptotic bounds presented above, we assumed that the objective function is smooth. Instead, if we make the stronger assumption that the noisy observation $F$ is smooth, then we can obtain a better non-asymptotic bound. 
We make this claim precise in the following.
\begin{assumption}
	\label{def:ass10}
	The function $F$ is Lipschitz continuous in the first argument, uniformly w.r.t the second, i.e., for any given $\xi$, $\left\lVert\nabla F(x,\xi) - \nabla F(y,\xi) \right\rVert \leq L\left\lVert x-y\right\rVert$, $\forall x,y\in \R^d$ almost surely.
\end{assumption}
Assumptions \cref{def:ass1} and \cref{def:ass10}  imply \cref{def:ass2},  since
\begin{equation*}
	\left\lVert\nabla f(x) - \nabla f(y) \right\rVert \leq \E_\xi \left\lVert\nabla F(x,\xi) - \nabla F(y,\xi) \right\rVert \leq L\left\lVert x-y\right\rVert,\;\; \forall x,y\in \R^d.
\end{equation*}
We consider now the following variant of the gradient estimator in \eqref{eq4}:
\begin{flalign}
\label{eq:grad-est-crn}
    G(x_k,\xi_k,u_k,\delta_k)
&\stackrel{\triangle}{=} {\displaystyle 
\bigg(\frac{F(x_k+\delta_k u_k,\xi_k)-F(x_k,\xi_k)}{\delta_k}\bigg)\frac{(d+1)u_k}{1+\lVert u_k \rVert^2}}.
\end{flalign}
Notice that both function observations in this case use the same noise factor $\xi_k$. Such a setting is possible when noise is added using common random numbers, for instance, in computer simulations. 
 In this setting, $G(x_k,\xi_k,u_k,\delta_k)$ defined in \eqref{eq:grad-est-crn} satisfies
\[\E_{\xi,h(u)}\left[G (x_k,\xi_k,u_k,\delta)\right] =\E_{h(u)}\left[\E_\xi \left[G (x_k,\xi_k,u_k,\delta)\right]\right] =\nabla f_\delta(x_k).\]

We now provide a non-asymptotic bound of the order $\mathcal{O}(N^{-1/2})$ for Algorithm \ref{alg:alg1} under the additional assumptions listed above.
\begin{theorem}
\label{coro1}
Assume \cref{def:ass1}, \cref{def:ass10}.
Suppose Algorithm \ref{alg:alg1} runs with step-sizes $\gamma_k$ and the smoothing parameter $\delta_k=\delta,\forall k$, chosen as follows: 
\[
\gamma_k =\min\left\{\frac{1}{2Lc_{13}},\frac{1}{c_{13}\sigma\sqrt{N}}\right\},\  \delta = \frac{1}{L\sqrt{dNc_{13}}},\]
where $c_{13} = \frac{4c_{11}c_{12}}{d+1}$, with $c_{11}$ defined in \cref{th2} and $c_{12}$ is the Frobenius norm of the generalized inverse of the matrix $\E_u\left(\frac{(d+1) uu^T}{1+\lVert u \rVert^2}\right)$. Let $x_R$ denote a point picked uniformly at random from $\{x_1,\ldots,x_N\}$.
Then
\[
    \E\left[\| \nabla f(x_R)\|^2\right] \leq \frac{c_{14}}{N}+\frac{2\sigma L}{\sqrt{N}},
\mbox{ where }  c_{14}=2L+c_{13}\sigma+8LBc_{13}^{3/2}+\frac{\sigma B c_{13}^{3/2}B}{L}+\frac{2c_{13}d+1}{L}. \]

\end{theorem}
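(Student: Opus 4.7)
The plan is to follow the non-convex SGD framework of Ghadimi--Lan, adapted to the biased truncated-Cauchy SF gradient oracle under the stronger smoothness assumption on $F$. The crucial new ingredient compared to \cref{coro2} is that, under the common-random-numbers (CRN) form \eqref{eq:grad-est-crn}, the finite-difference variance no longer scales as $1/\delta^2$; this permits a much larger choice of $\delta$ and drives the improved $\mathcal{O}(1/\sqrt{N})$ rate. Note that \cref{def:ass1} and \cref{def:ass10} together imply \cref{def:ass2}, so $f$ is $L$-smooth, which allows me to start from the descent lemma
\[
 f(x_{k+1})\le f(x_k)-\gamma_k\langle\nabla f(x_k),G_k\rangle+\tfrac{L\gamma_k^2}{2}\|G_k\|^2,
\]
where $G_k:=G(x_k,\xi_k,u_k,\delta)$ is the CRN estimator in \eqref{eq:grad-est-crn}.

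The first technical step is to control the conditional mean and second moment of $G_k$ via the Lipschitz gradient of $F$. A Taylor expansion in the first argument of $F$ gives $F(x_k+\delta u_k,\xi_k)-F(x_k,\xi_k)=\delta\langle\nabla F(x_k,\xi_k),u_k\rangle+R_k$ with $|R_k|\le \tfrac{L\delta^2}{2}\|u_k\|^2$. Dividing by $\delta$, multiplying by $\tfrac{(d+1)u_k}{1+\|u_k\|^2}$, then averaging first over $\xi_k$ (which collapses $\nabla F$ to $\nabla f$ by \cref{def:ass1}) and subsequently over $u_k$ (using rotational symmetry of the truncated Cauchy to identify $\E_u\bigl[\tfrac{(d+1)uu^T}{1+\|u\|^2}\bigr]=c_2 I_d$), yields
\[
 \E[G_k\mid\mathcal{F}_k]=c_2\nabla f(x_k)+b_k,\qquad \|b_k\|\le C_1\,L\,\delta\,(d+1),
\]
for a purely numerical constant $C_1$. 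For the second moment, squaring the same Taylor expansion and exploiting $\|u_k\|\le 1$ shows that the dominant contribution is $\langle\nabla F(x_k,\xi_k),u_k\rangle^2\cdot\tfrac{(d+1)^2\|u_k\|^2}{(1+\|u_k\|^2)^2}$, which is $O(1)$ in $\delta$ -- not $O(1/\delta^2)$ -- precisely because the same $\xi_k$ sits in both function evaluations. Combined with $\E_\xi\|\nabla F(x_k,\xi_k)\|^2\le \|\nabla f(x_k)\|^2+\sigma^2$ from \cref{def:ass1}, the standard isotropy calculation (which uses $c_{11}$ from \cref{th2} and $c_{12}=\|(\E_u\tfrac{(d+1)uu^T}{1+\|u\|^2})^{\dagger}\|_F$) packages everything into
\[
 \E[\|G_k\|^2\mid\mathcal{F}_k]\le c_{13}\bigl(\|\nabla f(x_k)\|^2+\sigma^2\bigr)+C_2\,L^2\delta^2 d.
\]

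Substituting both bounds into the descent lemma, applying the elementary inequality $-\langle\nabla f(x_k),b_k\rangle\le \tfrac{c_2}{4}\|\nabla f(x_k)\|^2+\tfrac{1}{c_2}\|b_k\|^2$, and choosing $\gamma_k\le 1/(2Lc_{13})$ so that the quadratic-in-$G$ term absorbs at most half of the linear-in-$G$ term, I take total expectation and sum $k=1,\dots,N$ to obtain an inequality of the shape
\[
 \tfrac{c_2\gamma}{2}\sum_{k=1}^N\E\|\nabla f(x_k)\|^2\le \bigl(f(x_1)-f^\ast\bigr)+N\gamma\cdot O(L^2\delta^2 d)+N\gamma^2 L c_{13}\sigma^2.
\]
Dividing by $c_2\gamma N/2$ turns the left-hand side into $\E\|\nabla f(x_R)\|^2$ by the randomised-output trick, and the prescribed choices $\delta=1/(L\sqrt{dNc_{13}})$ and $\gamma=\min\{1/(2Lc_{13}),\,1/(c_{13}\sigma\sqrt{N})\}$ balance the three contributions to recover exactly $c_{14}/N+2\sigma L/\sqrt{N}$.

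The main obstacle I anticipate is the second-moment calculation: rigorously exhibiting, under CRN, that $\E[\|G_k\|^2\mid\mathcal{F}_k]$ is $O(1)$ rather than $O(1/\delta^2)$, and assembling the resulting constant into precisely $c_{13}=4c_{11}c_{12}/(d+1)$. This hinges on using \cref{def:ass10} (pathwise smoothness of $F$, not merely $f$-smoothness) to cancel the $1/\delta$ in the finite difference, together with the rotational isotropy of the truncated Cauchy to evaluate $\E_u[(d+1)uu^T/(1+\|u\|^2)]$ and identify its generalized inverse. The subsequent step-size/smoothing calibration is routine once the correct dependences on $\delta,\gamma,d,\sigma,L$ are isolated.
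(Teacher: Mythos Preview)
Your route differs substantively from the paper's, and the place where you try to force the constant $c_{13}=\tfrac{4c_{11}c_{12}}{d+1}$ to appear does not work as written.

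The paper does \emph{not} apply the descent lemma to $f$. It applies it to the smoothed function $f_\delta$, exploiting the fact that under CRN the estimator $G(x_k,\xi_k,u_k,\delta)$ is an \emph{unbiased} estimator of $\nabla f_\delta(x_k)$ (so the inner-product cross term vanishes exactly, with no $c_2$ and no bias $b_k$). The passage back to $\|\nabla f\|^2$ is then handled by the two-sided comparison lemmas: Lemma~\ref{def:lem7}(b) is used on the right to bound $\E\|G\|^2$ by $\tfrac{2c_{11}}{d+1}\bigl[\tfrac{(d+1)^2\delta^2L^2}{4}+\|\nabla F\|^2\bigr]$, and Lemma~\ref{def:lem7}(a) is used on the left to convert $\|\nabla f_\delta\|^2$ into $\tfrac{1}{2c_{12}}\|\nabla f\|^2$ minus a $\delta^2$ correction. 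The product of these two conversion factors, $2c_{12}\cdot\tfrac{2c_{11}}{d+1}$, is precisely $c_{13}$, and this is also why the step-size threshold is $1/(2Lc_{13})$.

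Your proposal instead writes $\E[G_k\mid\mathcal F_k]=c_2\nabla f(x_k)+b_k$ and then asserts that a direct second-moment computation ``packages everything into'' $\E[\|G_k\|^2\mid\mathcal F_k]\le c_{13}(\|\nabla f\|^2+\sigma^2)+\cdots$. But $c_{12}$ is the Frobenius norm of the \emph{generalized inverse} of $\E_u\bigl[\tfrac{(d+1)uu^\top}{1+\|u\|^2}\bigr]$; it cannot arise from an upper bound on $\E\|G_k\|^2$, which only involves forward moments of $u$ (and would produce a factor like $c_{11}$ alone). Likewise, with your linear term carrying coefficient $c_2\gamma$, the absorption condition you invoke would read $\gamma\le c_2/(L\cdot\text{const})$, not $\gamma\le 1/(2Lc_{13})$. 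So while your direct-on-$f$ strategy can be pushed through to yield \emph{some} $O(1/\sqrt{N})$ bound (the key insight that CRN kills the $1/\delta^2$ variance is correct), it will not reproduce the stated constants $c_{13},c_{14}$; to match the theorem exactly you must run the descent argument on $f_\delta$ and bring in the generalized-inverse constant $c_{12}$ via Lemma~\ref{def:lem7}(a) on the left-hand side.
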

\begin{proof}
See \cref{pf:coro1}.
\end{proof}

\begin{remark}
In \cref{coro1}, we notice that the convergence rate is $\mathcal{O}(1/\sqrt{N})$ or equivalently $\mathcal{O}(1/\epsilon^2)$ number of iterations are needed to find an $\epsilon$-stationary point of \cref{def:eq1}. This rate is better than the one obtained in \cref{coro2} and this is a consequence of \cref{def:ass10} which ensures $F$ is smooth.  
In \cite{gadimi}, for GSF, the number of iterations to find an $\epsilon$-stationary point is bounded by $\mathcal{O}(1/\epsilon^2)$, and our bound matches their result. The advantage with our algorithm is that it outperforms the GSF algorithm empirically. In the next section, we provide some examples to validate this claim. 
\end{remark}

We will provide in the following theorem a non-asymptotic bound of order $\mathcal{O}(N^-{1/2})$ for \cref{alg:alg1} with balanced estimator by assuming sample performance is smooth.

\begin{theorem}
\label{prop6}
Assume \cref{def:ass4}, \cref{def:ass1} and \cref{def:ass10}.
Suppose Algorithm \ref{alg:alg1} is running with $\tilde G(x_k,\xi_k^+,\xi_k,u_k,\delta_k)$ instead of $G(x_k,\xi_k^+,\xi_k,u_k,\delta_k)$. Let the smoothing parameter $\delta_k=\delta,\forall k$ and step-sizes $\gamma_k$ is chosen as follows:
\begin{equation}
\label{eq101}
\begin{split}
    \gamma_k =\min\left\{\frac{c_2}{2c^2_{11}L},\frac{1}{N^{1/2}}\right\} , \
    \delta = \frac{1}{N^{1/2}}.
\end{split}
\end{equation}
Here $c_2$ is same as in Lemma \ref{def:lem2}. Let $x_R$ denote a point picked uniformly at random from $\{x_1,\ldots,x_N\}$.
Then under probability distribution $\p_R(k)=\frac{1}{N}$
\begin{equation*}
\begin{split}
    \E\left[\left\lVert\nabla f(x_R)\right\rVert^2\right]&\leq  \Big(\frac{2DL}{Nc_2^2}+ \frac{2D}{N^{1/2}}\Big)+\frac{2Bc_2''}{c_2N}+\frac{L}{2c_2N^{1/2}}\left(4c_{11}\sigma^2+\frac{2d^2c_2''^2}{N^2}\right),
   \end{split}
\end{equation*}
where $c_2''$ is defined \cref{prop3} and $B$ is the same as in \cref{def:ass4}.
\end{theorem}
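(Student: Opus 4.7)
The plan is to imitate the descent-lemma argument already used for \cref{coro2} and \cref{coro1}, but adapted to the balanced estimator $\tilde G$ of \eqref{balanced}. Two features of this estimator are decisive. First, by \cref{prop3} its conditional bias is $\mathcal{O}(\delta_k^{2})$ rather than $\mathcal{O}(\delta_k)$, so a smaller $\delta$ does less damage through bias. Second, the smoothness of $F$ (\cref{def:ass10}) together with $\mathbb{E}_\xi\lVert\nabla F(x,\xi)-\nabla f(x)\rVert^{2}\le \sigma^{2}$ (\cref{def:ass1}) will allow a second-moment bound on $\tilde G$ in which the noise term is proportional to $c_{11}\sigma^{2}$ rather than the $\sigma^{2}/\delta^{2}$ one would pick up if only $f$ were smooth. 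These two ingredients are exactly what permits the choice $\delta = 1/\sqrt{N}$ while still producing the announced $\mathcal{O}(N^{-1/2})$ rate.

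First I would invoke the descent inequality: since \cref{def:ass1} together with \cref{def:ass10} implies \cref{def:ass2}, $f$ is $L$-smooth, so
$f(x_{k+1}) \le f(x_k) - \gamma_k\langle \nabla f(x_k),\tilde G_k\rangle + \tfrac{L\gamma_k^{2}}{2}\lVert \tilde G_k\rVert^{2}$, with $\tilde G_k := \tilde G(x_k,\xi_k^{+},\xi_k^{-},u_k,\delta)$. Taking conditional expectation on $\mathcal{F}_k$ and using \cref{prop3}, we have $\mathbb{E}[\tilde G_k\mid \mathcal{F}_k] = c_2\nabla f(x_k) + c_2''\delta^{2}\mathbf{1}_d$; a Young-inequality split on the cross-term then yields $\mathbb{E}[\langle \nabla f(x_k),\tilde G_k\rangle\mid \mathcal{F}_k] \ge \tfrac{c_2}{2}\lVert \nabla f(x_k)\rVert^{2} - \tfrac{d\,c_2''^{\,2}\delta^{4}}{2 c_2}$, isolating the desired gradient-norm term and leaving a clean $\mathcal{O}(\delta^{4})$ residual.

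The main obstacle is bounding $\mathbb{E}[\lVert \tilde G_k\rVert^{2}\mid \mathcal{F}_k]$. My approach is to split $\tilde G_k = \tilde G_k^{f} + \tilde G_k^{\eta}$, where $\tilde G_k^{f}$ uses $f(x_k\pm\delta u_k)$ and $\tilde G_k^{\eta}$ uses the sample-path noise $F(x_k\pm\delta u_k,\xi_k^{\pm})-f(x_k\pm\delta u_k)$. For $\tilde G_k^{f}$, a symmetric Taylor expansion of $f(x_k+\delta u_k)-f(x_k-\delta u_k)$ around $x_k$ under \cref{def:ass4} leaves $2\delta u_k^{\top}\nabla f(x_k)+\mathcal{O}(\delta^{3}\lVert u_k\rVert^{3})$; squaring and dividing by $(2\delta)^{2}$, then using the isotropy of the truncated Cauchy $u_k$ and the bound $\lVert u_k\rVert\le 1$, yields $\mathbb{E}[\lVert \tilde G_k^{f}\rVert^{2}\mid \mathcal{F}_k] \le c_{11}\lVert \nabla f(x_k)\rVert^{2} + \mathcal{O}(\delta^{2})$. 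For $\tilde G_k^{\eta}$, the key trick is to apply \cref{def:ass10} to rewrite the symmetric $F$-difference at $x_k\pm\delta u_k$ in terms of $\nabla F(x_k,\xi_k^{\pm})-\nabla f(x_k)$, up to $\mathcal{O}(\delta^{2}\lVert u_k\rVert^{2})$ remainders; the $1/\delta$ in $\tilde G_k^{\eta}$ cancels against the $\delta$ in these expansions, and after invoking \cref{def:ass1} the contribution is $4c_{11}\sigma^{2} + \mathcal{O}(\delta^{2})$. Combining the two pieces produces a bound of exactly the form needed in the theorem.

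Substituting the bias and second-moment bounds into the conditional descent inequality, summing over $k=1,\dots,N$ so that the $f(x_k)-f(x_{k+1})$ terms telescope into $D:=f(x_1)-f^{*}$, and dividing by $N\gamma$ with the choice $\gamma = \min\{c_2/(2c_{11}^{2}L), 1/\sqrt{N}\}$, $\delta = 1/\sqrt{N}$ from \eqref{eq101}, balances the step-size-squared noise contribution (order $\gamma\sigma^{2}L$) against the bias residuals (of orders $\delta^{2}$ and $\delta^{4}/\gamma$), so that every remaining term is $\mathcal{O}(1/N)$ or $\mathcal{O}(1/\sqrt{N})$. Converting the averaged sum into $\mathbb{E}\lVert \nabla f(x_R)\rVert^{2}$ via the uniform sampling rule $\mathbb{P}_R(k)=1/N$ and collecting constants then produces the stated bound. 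The descent inequality and the telescoping step are routine; the real obstacle is the second-moment bound, since without the $F$-smoothness assumption the noise term would scale as $\sigma^{2}/\delta^{2}$ and the rate would collapse back to the $\mathcal{O}(N^{-1/3})$ of \cref{prop4}.
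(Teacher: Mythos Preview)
Your overall framework---descent lemma on $f$, telescoping to $D=f(x_1)-f^*$, then dividing by $N\gamma$ and invoking the uniform sampling rule---matches the paper's proof. Your handling of the cross-term via Young's inequality is a mild variant of the paper's route (the paper instead uses $\lVert\nabla f(x_k)\rVert\le B$ to bound $\langle\nabla f(x_k),\Gamma'_k\rangle$ by $B\tau$ with $\tau=c_2''\delta^2$), and either device would be acceptable.

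The genuine gap is in your second-moment argument. Your decomposition $\tilde G_k=\tilde G_k^{f}+\tilde G_k^{\eta}$ cannot yield the noise contribution you claim. Writing $\eta_k^{\pm}=F(x_k\pm\delta u_k,\xi_k^{\pm})-f(x_k\pm\delta u_k)$ and Taylor-expanding each summand about $x_k$ gives
\[
\eta_k^{+}-\eta_k^{-}=\bigl[F(x_k,\xi_k^{+})-F(x_k,\xi_k^{-})\bigr]
+\delta u_k^{\top}\bigl[\nabla F(x_k,\xi_k^{+})+\nabla F(x_k,\xi_k^{-})-2\nabla f(x_k)\bigr]+\mathcal{O}(\delta^{2}).
\]
The bracketed zeroth-order term $F(x_k,\xi_k^{+})-F(x_k,\xi_k^{-})$ carries no factor of $\delta$, so after dividing by $2\delta$ it contributes $\mathcal{O}(1/\delta^{2})$ to $\mathbb{E}\lVert\tilde G_k^{\eta}\rVert^{2}$. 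The $F$-smoothness of \cref{def:ass10} only controls the remainder after the linear term; it does nothing to this zeroth-order noise. Hence your ``the $1/\delta$ cancels against the $\delta$ in these expansions'' is false, and the split recovers exactly the $\sigma^{2}/\delta^{2}$ scaling you said you wanted to avoid.

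The paper circumvents this by \emph{not} splitting off the noise. It applies the bound of \cref{prop5} to the smooth function $F(\cdot,\xi_k)$ in place of $f$: since $\tilde G_k$ is the balanced finite-difference of $F(\cdot,\xi_k)$, \cref{prop5} gives
\[
\mathbb{E}_{u}\bigl[\lVert\tilde G_k\rVert^{2}\bigr]\le 2c_{11}^{2}\lVert\nabla F(x_k,\xi_k)\rVert^{2}+2d^{2}\delta^{4}c_2''^{2},
\]
and then \cref{def:ass1} converts $\mathbb{E}_\xi\lVert\nabla F(x_k,\xi_k)\rVert^{2}$ into $2\lVert\nabla f(x_k)\rVert^{2}+2\sigma^{2}$. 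The point is that keeping the full $F$-difference intact lets the $1/\delta$ cancel against $F(x_k+\delta u_k,\xi)-F(x_k-\delta u_k,\xi)=\mathcal{O}(\delta)$ \emph{pathwise}; once you subtract the $f$-part first, this pathwise cancellation is destroyed. (Note also that this step of the paper's argument implicitly uses a common noise $\xi_k$ at both evaluation points, so that $\tilde G_k$ really is a finite-difference of a single smooth function.) To repair your proof, drop the signal/noise split and bound $\mathbb{E}\lVert\tilde G_k\rVert^{2}$ in one shot via \cref{prop5} applied to $F(\cdot,\xi)$.
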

\begin{proof}
See \cref{pf:th6}
\end{proof}

\section{Convergence proofs}
\label{section-4}

\subsection{Asymptotic convergence proofs}
\label{sec:appendix-asymp}
\subsubsection{Proof of Lemma \ref{def:lem2}}
In the lemma below, we  state and prove a  bound on the moments of a truncated Cauchy r.v.
\begin{lemma}
\label{def:lem5}
Let u be a truncated Cauchy r.v., then for any positive integer r, we have 
\begin{equation}
\label{def:eq10}
    \E_u[\lVert u\rVert^{2r}]\leq \frac{c_{11}}{(r+d)},
\end{equation}
where $c_{11}=\frac{2\Gamma(\frac{d+1}{2})}{\sqrt{\pi}\Gamma(d/2)c_1}.$
\end{lemma}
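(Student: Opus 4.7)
\subsection*{Proof plan for Lemma \ref{def:lem5}}

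The plan is to compute $\E_u[\lVert u\rVert^{2r}]$ directly from the truncated Cauchy density using spherical coordinates, then bound the radial integral in a crude but sufficient way. Since $u$ is supported on the unit ball (i.e.\ $h(u)$ is the $\delta=1$ truncated Cauchy density from Proposition \ref{pf7}), I would first write
\[
\E_u[\lVert u\rVert^{2r}] \;=\; \frac{\Gamma(\tfrac{d+1}{2})}{\pi^{(d+1)/2}c_1}\int_{\lVert u\rVert\le 1}\frac{\lVert u\rVert^{2r}}{(1+\lVert u\rVert^2)^{(d+1)/2}}\,du.
\]

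Next I would switch to spherical coordinates, writing $u=\rho\omega$ with $\rho=\lVert u\rVert\in[0,1]$ and $\omega$ on the unit $(d-1)$-sphere $S^{d-1}$. Using the standard surface-area formula $\mathrm{vol}(S^{d-1})=\frac{2\pi^{d/2}}{\Gamma(d/2)}$, the angular integration is trivial and yields
\[
\E_u[\lVert u\rVert^{2r}] \;=\; \frac{2\Gamma(\tfrac{d+1}{2})}{\sqrt{\pi}\,\Gamma(d/2)\,c_1}\int_0^1\frac{\rho^{2r+d-1}}{(1+\rho^2)^{(d+1)/2}}\,d\rho \;=\; c_{11}\int_0^1\frac{\rho^{2r+d-1}}{(1+\rho^2)^{(d+1)/2}}\,d\rho,
\]
after identifying the prefactor with the constant $c_{11}$ as defined in the statement.

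The remaining step is to bound the radial integral. Since $\rho\in[0,1]$ gives $(1+\rho^2)^{(d+1)/2}\ge 1$, we have
\[
\int_0^1\frac{\rho^{2r+d-1}}{(1+\rho^2)^{(d+1)/2}}\,d\rho \;\le\; \int_0^1\rho^{2r+d-1}\,d\rho \;=\; \frac{1}{2r+d} \;\le\; \frac{1}{r+d}.
\]
Combining this with the previous display yields $\E_u[\lVert u\rVert^{2r}] \le \tfrac{c_{11}}{r+d}$, which is exactly \eqref{def:eq10}.

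There is no real obstacle here; the proof is essentially a spherical-coordinates computation followed by the trivial bound $(1+\rho^2)^{(d+1)/2}\ge 1$ on the unit ball. The only mild subtlety is lining up the normalization constant: one must verify that the prefactor $\tfrac{2\Gamma((d+1)/2)}{\sqrt{\pi}\,\Gamma(d/2)\,c_1}$ emerging from the Jacobian and the sphere's surface area agrees with the $c_{11}$ defined in the statement.
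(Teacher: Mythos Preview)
Your proposal is correct and follows essentially the same approach as the paper: write the expectation as an integral against the truncated Cauchy density, pass to spherical (radial) coordinates so that the angular part produces the surface-area factor $2\pi^{d/2}/\Gamma(d/2)$, identify the resulting prefactor with $c_{11}$, and then bound the radial integrand using $(1+\rho^2)^{(d+1)/2}\ge 1$ on $[0,1]$. The only cosmetic difference is that the paper integrates in the variable $R=\lVert u\rVert^2$ (arriving at $\int_0^1 R^{r+d-1}\,dR=1/(r+d)$ directly), whereas you integrate in $\rho=\lVert u\rVert$ and obtain the slightly sharper $\int_0^1 \rho^{2r+d-1}\,d\rho=1/(2r+d)$ before relaxing to $1/(r+d)$; either way the stated bound follows.
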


\begin{proof}
\begin{equation*}
\begin{split}
     \mathbb{E}_u[\lVert u\rVert^{2r}]&=\frac{\Gamma(\frac{d+1}{2})}{\pi^{\frac{d+1}{2}}c_1}\int_{\lVert u \rVert^2\leq 1} \lVert u \rVert^{2r}\frac{1}{(1+\lVert u \rVert^2)^{\frac{d+1}{2}}} du \\
     &=\frac{\Gamma(\frac{d+1}{2})}{\pi^{\frac{d+1}{2}}c_1}\int_0^1\int_{\lVert u \rVert^2=R} R^{r}\frac{1}{(1+R)^{\frac{d+1}{2}}} du dR\\
     &=\frac{\Gamma(\frac{d+1}{2})}{\pi^{\frac{d+1}{2}}c_1}\int_0^1 R^{r}\frac{1}{(1+R)^{\frac{d+1}{2}}} \frac{2\pi^{d/2}}{\Gamma(d/2)}R^{d-1}dR\\
     &=c_{11}\int_0^1 \frac{R^{r+d-1}}{(1+R)^{\frac{d+1}{2}}}dR\\
     &\leq c_{11}\int_0^1 R^{r+d-1} dR\\
     &=\frac{c_{11}}{(r+d)}.
    \end{split}
\end{equation*}
\end{proof}
\subsubsection*{Proof of Lemma \ref{def:lem2}}
\label{pf1}
\begin{proof}
Using Taylor series expansions,
we obtain 
\begin{equation}
\begin{split}
    f(x_k+\delta_k u_k)&=f(x_k)+\delta_k u_k^T\nabla f(x_k)
    +\frac{\delta_k^2}{2}u_k^T\nabla^2 f(\Bar{x}^+_k)u_k, \label{eq:taylor}
\end{split}
\end{equation}
where $\Bar{x}^+_k$ is on the line segment between $x_k$ and $x_k+\delta_k u_k$. 
Using \eqref{eq:taylor}, we have
\begin{flalign*}
& G(x_k,\xi_k^+,\xi_k,u_k,\delta_k)\\
&=\frac{f(x+\delta_k u_k)-f(x_k)}{\delta_k}\frac{(d+1)u_k}{1+\lVert u_k \rVert^2}+\frac{(\eta_k^+-\eta_k)}{\delta_k}\frac{(d+1)u_k}{1+\lVert u_k \rVert^2},\\
&= \frac{(d+1)u_ku_k^T\nabla f(x_k)}{1+\lVert u_k \rVert^2} + \bigg(\frac{ u_k^T\nabla^2 f(\Bar{x}^+_k) u_k}{2}\bigg)\frac{(d+1)u_k}{1+\lVert u_k \rVert^2}\delta_k+\frac{(\eta_k^+-\eta_k)}{\delta_k}\frac{(d+1)u_k}{1+\lVert u_k \rVert^2},
\end{flalign*}
Let $u_k^i$ denote the $i$-th element of the random vector $u_k$.
Then, the off-diagonal elements of  $\displaystyle\E_u\bigg[\frac{(d+1)u_ku_k^T}{1+\lVert u_k \rVert^2}\bigg]$ satisfy
 $\displaystyle\E_u\bigg[\frac{(d+1)u_k^iu_k^j}{1+\lVert u_k \rVert^2}\bigg]= 0,$ since $\displaystyle\frac{1}{1+\lVert u_k\rVert^2}\in (0,1]$ is upper bounded by a fixed quantity, and for $i\neq j$, $\E_u[u_k^iu_k^j]=0$. Hence
\begin{equation}
\label{eq:4.6}
    \E_u\left[\frac{(d+1)u_ku_k^T}{1+\lVert u_k \rVert^2}\right]= c_2\mathbb{I},
\end{equation}
where $c_2$ is as defined in the lemma statement.

Notice that  $\E_u\left[(\eta_k^+-\eta_k)\frac{u_k}{1+\lVert u_k \rVert^2}\middle\vert\ \mathcal{F}_k\right]=0$, since $E[\eta_k^+-\eta_k\mid \mathcal{F}_k]=0$, and $u_k$ is independent of $\mathcal{F}_k$. Thus we obtain
\begin{align*}
    \E[G(x_k,\xi_k^+,\xi_k,u_k,\delta_k)|\mathcal{F}_k]=  c_2\nabla f(x_k)+\delta_k w_k.
\end{align*}
\end{proof}

\subsubsection{Proof of Theorem \ref{th1}}
\label{pf2}

For the proof of Theorem \ref{th1}, we require the notion of Lyapunov stability, which we define next.
\begin{definition}
\label{def:def2}
A continuously differentiable function $V\colon \mathbb{R}^d \to [0,\infty)$ is said to be a Lyapunov function for an ODE $\dot y=f(y)$ with set of equilibrium points $\mathit{H}$ if it satisfies the properties below.
\begin{enumerate}
    \item $\lim_{\lVert x \rVert \to +\infty} V(x) =\infty$.
    \item The inner product of $f(y)$ with $V(y)$ can take the following values:
    \[
    \langle f(y), \nabla V(y) \rangle=
    \begin{cases}
        0 & \text{if}\ y\in \mathit{H},\\
        < 0 & \text{otherwise}. \\
    \end{cases}
    \]
\end{enumerate}
\end{definition}

\begin{lemma}
\label{def:lem3}
Consider the function in \cref{def:eq1} such that $f(\cdot)\geq c'$ where $c'$ is a negative real number and $H$ be the set of equilibrium points of the ODE $\dot x(t)=-\nabla f(x(t))$, i.e., $H=\{x(t):\nabla f(x(t))=0\}$ where $f(x)$ is defined as in \cref{def:eq1}. Then $x(t) \to H$ as $t\rightarrow\infty$. 
\end{lemma}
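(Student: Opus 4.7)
The plan is to exhibit a Lyapunov function for the ODE $\dot x(t) = -\nabla f(x(t))$ that fits the framework of Definition \ref{def:def2}, and then conclude $x(t)\to H$ by a LaSalle-type invariance argument. The natural candidate is $V(x) := f(x) - c'$. Since $f(\cdot)\geq c'$ by hypothesis, $V$ is non-negative; Assumption \ref{def:ass4} makes $V$ continuously differentiable (in fact three-times continuously differentiable), so the regularity requirements of Definition \ref{def:def2} are immediate.

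The next step is to verify the descent condition. Along any trajectory of the ODE,
\[
\frac{d}{dt}V(x(t)) \;=\; \langle \nabla V(x(t)),\,\dot x(t)\rangle \;=\; -\|\nabla f(x(t))\|^2,
\]
which is zero exactly when $x(t)\in H$ and strictly negative otherwise. In particular, $V(x(t))$ is monotonically non-increasing and bounded below by $0$, hence converges to a finite limit $V^\infty\geq 0$. Integrating the identity above over $[0,\infty)$ yields
\[
\int_0^\infty \|\nabla f(x(s))\|^2\,ds \;=\; V(x(0))-V^\infty \;<\;\infty.
\]

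From here I would conclude $x(t)\to H$ by a standard LaSalle argument: along any convergent subsequence $x(t_n)\to x^\star$, the integrability above (together with continuity of $\nabla f$, from Assumption \ref{def:ass4}) forces $\nabla f(x^\star)=0$; and if some sequence $t_n\to\infty$ kept $\mathrm{dist}(x(t_n),H)\geq\epsilon>0$, continuity of $\nabla f$ and boundedness of $\dot x$ on the trajectory would produce intervals of uniformly positive length on which $\|\nabla f(x(\cdot))\|^2$ stays bounded away from zero, contradicting the finite integral.

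The main obstacle is the radial unboundedness required in condition (1) of Definition \ref{def:def2} and, relatedly, having a compact set in which to apply LaSalle. The hypothesis $f\geq c'$ alone does not guarantee $V(x)\to\infty$ as $\|x\|\to\infty$, so some additional input is needed: either one restricts attention to bounded trajectories (which is the regime actually used when coupling this ODE to the SA recursion via Assumption \ref{def:ass6}), or one works on any compact sublevel set $\{V\leq V(x(0))\}$ that contains the trajectory once the sublevel set is known to be bounded. In the coupling with Algorithm \ref{alg:alg1}, Assumption \ref{def:ass6} makes this seamless, so I would note this caveat and proceed under the implicit convention — used throughout the paper — that trajectories of the limiting ODE inherit the boundedness of the iterates they shadow.
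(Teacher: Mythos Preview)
Your approach is essentially the same as the paper's: both take $V(x)=f(x)-c'$ as a Lyapunov function, compute $\frac{d}{dt}V(x(t))=-\lVert\nabla f(x(t))\rVert^2$, and conclude via the Lyapunov/LaSalle machinery referenced in Definition~\ref{def:def2}. If anything, you are more thorough---the paper simply asserts that $g:=f-c'$ ``serves as a Lyapunov function'' and that ``the claim follows,'' whereas you spell out the integrability argument and explicitly flag the radial-unboundedness caveat (which the paper does not address, relying implicitly on Assumption~\ref{def:ass6} as you note).
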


\begin{proof}
Let $g(\cdot)=f(\cdot)-c'\geq 0$. Further
\begin{equation*}
    \begin{split}
        \frac{d g(x(t))}{d t}&= \langle \nabla f(x(t)), \dot{x}(t)\rangle\\
        &= \langle \nabla f(x(t)), -\nabla f(x(t))\rangle\\
        &=-\lVert \nabla f(x(t))\rVert^2.
    \end{split}
\end{equation*}
Thus, ${\displaystyle \frac{dg(x(t))}{dt} <0}$ for $x(t)\not\in H$ and is
0 otherwise. Thus, $g$ serves as a Lyapunov function for the above (gradient) ODE. The claim follows.
\end{proof}

\subsubsection*{Proof of Theorem \ref{th1}}
\begin{proof}
From \ref{def:eq2} and Lemma \ref{def:lem2}, we have
\begin{equation}
\label{eq:5}
    \begin{split}
       x_{k+1}&=x_{k}-\gamma_k(\E_u[G(x_k,\xi_k^+,\xi_k,u_k,\delta_k)|\mathcal{F}_k]+M_k),\\
       &=x_{k}-\gamma_k(c_2\nabla f(x_k)+ c_2'\delta_k \mathbf{1}_{d}+M_k).\\
    \end{split}
\end{equation}
where $M_k=G(x_k,\xi_k^+,\xi_k,u_k,\delta_k)-\E_u[G(x_k,\xi_k,\xi_k^+, u_k,\delta)|\mathcal{F}_k]$, $k\geq 0$, is a martingale difference sequence. Further, $\gamma_k, k\geq 1$ satisfies  \cref{def:ass3}.
The update rule \cref{eq:5} thus tracks the ODE \cref{ode}.  However the ODE \cref{ode} has the same equilibria as the ODE $$\dot{x}=-\nabla f(x).$$ In fact, if the constant $c_2>1$, the ODE will have a faster speed of convergence to $x^*$.  
Note that $c_2'\delta_k \mathbf{1}_{d}$ is equivalent to the bias vector. Here each term of $\mathcal{O}(\delta_k)\mathbf{1}_{d}$ goes to zero as $k \to \infty$. 
From \cite{Clark78} we can directly conclude the convergence of the above algorithm using the assumptions (a)-(d) below that are taken from \cite{Clark78}.
\begin{enumerate}[label=(\alph*)]
    \item $\nabla f(x)$ is a Lipschitz continuous function.
    \item The bias sequence $\delta_k,k\geq 1$ is almost surely bounded and with $\delta_k\to 0$ almost surely as $k\to \infty$. 
    \item The step-sizes $\gamma_k, k\geq 1$, satisfy $\gamma_k\to 0$ as $k\to\infty$ and $\sum_k \gamma_k=\infty$. 
    \item The Martingale sequence $M_k$ satisfies the following: $\forall \zeta >0$, 
    \begin{equation*}
        \lim_{k\to\infty}\p\left(\sup_{m\geq k}\lVert \sum_{i=k}^m \gamma_iM_i\rVert \geq \zeta\right)=0.
    \end{equation*}
\end{enumerate}
Assumptions $(a),(b)$ and $(c)$ above directly follow from \cref{def:ass3} and \cref{def:ass4}. 
We now verify Assumption $(d)$ above. 
Recall the Doob's martingale inequality, i.e.
\begin{equation*}
    \p\left(\sup_{m\geq 0}\lVert Z_m\rVert\geq \zeta\right)\leq \frac{1}{\zeta^2}\lim_{m\to \infty}\E\lVert Z_m \rVert^2.
\end{equation*}
By considering  $Z_m=\sum_{i=0}^{m-1} \gamma_iM_i$, we obtain
\begin{align}
\label{def:eq6}
    \p\left(\sup_{m\geq k}\lVert \sum_{i=k}^{m}\gamma_iM_i\rVert\geq \zeta\right)&\leq \frac{1}{\zeta^2}\lim_{m\to \infty}\E\lVert \sum_{i=k}^{m}\gamma_iM_i \rVert^2 \nonumber\\
        &\stackrel{(a)}{=}\frac{1}{\zeta^2}\sum_{i=k}^{\infty} \gamma_k^2\E\lVert M_i\rVert^2 ,
\end{align}
where the first inequality in $(\ref{def:eq6})$ follows from the fact that  $\E[M_iM_j]=\E[M_i\E[M_j]|\mathcal{F}_j]=0$. 
Now using the identity $\E\lVert X-\E[X|\mathcal{F}]\rVert^2\leq \E[\lVert X \rVert^2]$ for a r.v. $X$ one can rewrite $\E\lVert M_k\rVert^2\leq \E\lVert G(x_k,\xi_k^+,\xi_k,u_k,\delta_k)\rVert^2$. Hence
\begin{flalign}
\label{martingle}
&\E\lVert M_k \rVert^2 \nonumber \\ 
&\leq \E\lVert G(x_k,\xi_k^+,\xi_k,u_k,\delta_k)\rVert^2\nonumber\\
        &=\E\left[\frac{\lVert u_k\rVert^2(d+1)^2}{(1+\lVert u_k\rVert^2)^2}\left(\frac{\eta_k^+-\eta_k}{\delta_k}\right)^2\right]+\E\left[\frac{\lVert u_k\rVert^2(d+1)^2}{(1+\lVert u_k\rVert^2)^2}\left(\frac{f(x_k+\delta_ku_k)-f(x_k)}{\delta_k}\right)^2\right]\nonumber\\
        &\leq \frac{C''}{\delta_k^2}.
\end{flalign}
Here $C''=\E\left[\frac{\lVert u_k\rVert^2(d+1)^2}{(1+\lVert u_k\rVert^2)^2}\right]\big[2(\beta_1+\beta_2)\big] <\infty$.
The last inequality follows from $\cref{def:ass5}$. Moreover, from construction, the truncated Cauchy distribution  has finite moments. Plugging the above inequality in \cref{def:eq6},we obtain
\begin{equation*}
    \begin{split}
        \lim_{k\to\infty} \p\left(\sup_{m\geq k}\lVert \sum_{i=k}^{m}\gamma_iM_i\rVert\geq \zeta\right)\leq \frac{C''}{\zeta^2}\lim_{k\rightarrow\infty} \sum_{i=k}^\infty(\frac{\gamma_i^2}{\delta_i^2})=0
    \end{split}.
\end{equation*}
The last inequality follows by the assumption $\sum_k(\frac{\gamma_k^2}{\delta_k^2})< \infty$.
So, by the convergence of the martingale sequence and  Lemma \ref{def:lem3},  we can conclude that $x_k\to H$ almost surely as $k\to \infty$.
\end{proof}

\subsubsection{Proof of Theorem \ref{th2}}
\label{pf:th2}
\begin{proof}
\label{pf11}
This proof follows from Proposition 1 of \cite{chin1997comparative} after noting the following facts:
\begin{align}
\label{eq:4.10}
    \E\left[\frac{(u_k^i)^4}{(1+\lVert u \rVert^2)}\right]&\leq \E[\lVert u \rVert^4]\nonumber,\\
    \E\left[\frac{uu^T}{(1+\lVert u \rVert^2)}\right]&\preceq \mathbb{I},
\end{align}
\end{proof}

\subsubsection{Proof of Proposition \ref{prop}}
\label{pf3}
For establishing the avoidance of traps result in Proposition \ref{prop}, we require a result from \cite{pemantle}, which we state below.
\begin{theorem}
\label{Them:1}
Let $Y$ be a function in $\Delta\subseteq \mathbb{R}^d$ such that $Y:\Delta\to T\Delta$ where $T\Delta$ is the tangent space of $\Delta$ at each point. Consider a sequence of random variables $\{x_n: n\geq 0\}$ that are updated as in \cref{def:eq4} with a given $x_0$. 
\begin{equation}
    \label{def:eq4}
    x_{k+1}=x_k+a_kY(x_k)+\mu_k.
\end{equation}
Let $p$ be any critical point, i.e., $Y(p)=0$, and let $\mathcal{N}$ be a neighborhood of $p$. Assume that there are constants $\gamma \in (1/2, 1]$ and $c_5,c_6,c_7,c_8\geq 0$ for which the following conditions are satisfied whenever $x_n\in \mathcal{N}$ and $n$ is sufficiently large:
\begin{itemize}
    \item[(i)] $p$ is a linearly unstable critical point of $Y$;
    \item[(ii)] $\frac{c_5}{k^\gamma} \leq a_k \leq \frac{c_6}{k^\gamma}$;
    \item[(iii)] $\E\left[(\mu_k \cdot \theta)^+|\mathcal{F}_k\right]\geq c_7/k^\gamma$ for every unit vector $\theta \in T\Delta$;
    \item[(iv)] $\lVert \mu_k \rVert\leq c_8/k^\gamma$.
\end{itemize}
In the above, $(\mu_k \cdot \theta)^+ \stackrel{\triangle}{=} \max \{\mu_k \cdot \theta,0\}$ is the positive part of $\mu_k \cdot \theta$.
Assume $Y$ is smooth enough (at least $C^2$) to apply the stable manifold theorem. Then $\p(x_k\to p) = 0.$
\end{theorem}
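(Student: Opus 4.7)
The plan is to argue by contradiction. Assume the event $E=\{x_k\to p\}$ has positive probability and derive that, under the linear instability and noise richness hypotheses, the iterate is pushed decisively away from $p$ along an unstable direction, contradicting convergence.

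The first step is to linearize near $p$. Since $p$ is a hyperbolic critical point of the $C^2$ vector field $Y$, the Jacobian $A=DY(p)$ admits an invariant splitting $\R^d=E^s\oplus E^u$ where $E^u$ is the unstable eigenspace, non-empty by hypothesis (i), and the smallest positive real part $\lambda_u>0$ governs the unstable spectral gap. Using a $C^2$ change of coordinates around $p$ supplied by the stable manifold theorem, I can work with $x_k-p=(s_k,u_k)\in E^s\oplus E^u$ where $Y$ acts linearly to leading order in each block. On the event $E$, eventually $x_k\in\mathcal{N}$ and in particular $\lVert u_k\rVert\to 0$.

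Next I would track the unstable projection $u_k$. A Taylor expansion of $Y$ at $p$ together with \cref{def:eq4} yields a recursion of the form
\begin{equation*}
u_{k+1}=(I+a_k A_u)u_k+a_k R_k+\pi^u(\mu_k),
\end{equation*}
where $\pi^u$ denotes projection onto $E^u$, $A_u=A|_{E^u}$, and $\lVert R_k\rVert=O(\lVert x_k-p\rVert^2)$. Iterating the relation, the deterministic amplification factor grows like $\exp\bigl(\lambda_u\sum_k a_k\bigr)$, which diverges by (ii) since $\gamma\le 1$, while the quadratic remainder is dominated on $\mathcal{N}$ because $\lVert x_k-p\rVert$ is eventually small on $E$. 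Condition (iii) supplies the crucial push: via a direction-counting argument over a finite net of unit vectors in $E^u$, one shows that $\pi^u(\mu_k)$ injects a systematic positive contribution of order $k^{-\gamma}$ into some fixed direction infinitely often. Comparing the accumulated, geometrically amplified noise contribution with the negligible remainder produces a lower bound on $\lVert u_n\rVert$ that stays bounded away from zero on a subset of $E$ with positive probability, contradicting $u_k\to 0$ and forcing $\p(E)=0$.

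The main obstacle is the noise-richness step. Condition (iii) gives a bound on the conditional first moment $\E[(\mu_k\cdot\theta)^+\mid\mathcal{F}_k]$ for each fixed unit vector $\theta$, but the argument above requires an almost-sure recurrence statement about actual realizations of $\pi^u(\mu_k)$, uniformly across the continuum of possible unstable directions. The standard remedy is to combine (iv), which yields a uniform upper bound $\lVert\mu_k\rVert\le c_8/k^\gamma$, with a Paley--Zygmund type second-moment estimate to convert first-moment positivity into a bounded-below event probability, and then to apply a Borel--Cantelli argument over a finite cover of the unit sphere in $E^u$. Getting the step-size regime $\gamma\in(1/2,1]$ to align with this recurrence-versus-amplification balance, so that the exponentially amplified noise overwhelms the shrinking tail, is the technical core of the argument in \cite{pemantle}.
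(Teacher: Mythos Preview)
The paper does not prove this theorem at all. Theorem~\ref{Them:1} is stated explicitly as a result imported from \cite{pemantle} (``we require a result from \cite{pemantle}, which we state below''), and is then invoked as a black box in the proof of Proposition~\ref{prop}. There is no argument in the paper to compare your proposal against; the paper's ``proof'' is simply the citation.

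Your sketch is a reasonable outline of Pemantle's actual argument: linearize at the unstable critical point, project onto the unstable eigenspace, and show that the noise, amplified by the unstable linear dynamics, prevents the unstable component from converging to zero. You have also correctly identified the main technical difficulty, namely converting the conditional first-moment lower bound in (iii) together with the pathwise upper bound in (iv) into an almost-sure recurrence statement that survives the amplification balance dictated by $\gamma\in(1/2,1]$. But since the paper defers entirely to \cite{pemantle}, the appropriate response here is not to reconstruct that proof but simply to cite it, as the paper does.
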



\begin{remark}
Note that, the limit of $x_k$ exists if $Y(x)=0$. Here the iteration rule \cref{def:eq4} can be considered as a discrete version of the differential equation $\dot{x(t)}=Y(x(t))$ with initial condition $x_0=v(0)$. 
We need to classify the points where $Y(v)=0$. Consider a linear approximation $T$ near a critical point $p$ of $Y(\cdot)$ such that $Y(p+x)=T(x)+\mathcal{O}(|x|^2)$. $p$ is said to be attracting point if real part of the eigenvalue of T is negative and in such a case $x_t$ will converge to $p$ if there are no other attracting points for the ODE. On the other hand,  $p$ is said to be linearly unstable if some eigenvalue has positive real part and $x_n$ exists in the neighbourhood of $p$ for any choice of $v(0)$ which is not on a stable manifold of smaller dimension. However, if all the eigenvalues of $T$ have a positive real part, then $p$ is said to be a repelling node and the sequence $x_k, k\geq 0$ will never converge.
\cref{Them:1} gives conditions under which $\p(x_k\to p)=0$  when $p$ is a repelling point as well as a linearly unstable critical point.
\end{remark}
\subsubsection*{Proof of Proposition \ref{prop}}
\begin{proof}
Let's rewrite the update rule \cref{def:eq2} as
\begin{equation}
\label{eq-0}
x_{k+1} =x_{k}-\gamma_kG (x_k,u_k,\delta_k)-\epsilon_k,
\end{equation}
where
${\displaystyle G (x_k,u_k,\delta_k)=\frac{f(x_k+\delta_k u_k)-f(x_k)}{\delta_k}\frac{(d+1)u_k}{1+\lVert u_k \rVert^2}}$ and ${\displaystyle \epsilon_k=\gamma_k\frac{\eta_k^+-\eta_k}{\delta_k}\frac{(d+1)u_k}{1+\lVert u_k \rVert^2}}$.\\
Here $\E[\epsilon_k|\mathcal{F}_k]=0$. Note that \cref{eq-0} is equivalent to \cref{def:eq4}  by considering $-\epsilon_k=\mu_k$ and $-G =Y$. 

We will show that the conditions stated in \cref{Them:1} will hold for our case. Let $\gamma\in(\frac{1}{2},1]$. Choose the step-size $\gamma_k\in[\frac{c_8}{k^\gamma},\frac{c_9}{k^\gamma}]$ such that both $(ii)$ and \cref{def:ass3} are satisfied, and in \cref{def:ass5}, we have considered $\E|\eta_k^+|^2$ and $\E|\eta_k^+|^2$ are bounded which in turn implies that $| \eta_k^+-\eta_k|$ is bounded and noting the fact $u_k$ is the truncated Cauchy distribution (over the unit sphere), one can trivially show that $\lVert \epsilon_k\rVert$ is bounded, which implies $(iv)$. 
 
 We now show $(iii)$ for our case below.
Note that $\epsilon_k=(\eta_k^+-\eta_k)u_km_k$, where $m_k=\frac{(d+1)\gamma_k}{1+\lVert u_k \rVert^2}$.
Consider the unit vector with the $i$th entry as $1$, i.e., $\theta=(0,0,...1,..0)^T$. This implies $\epsilon_k\cdot\theta=(\eta_k^+-\eta_k)u_k^im_k$. Here, $u_k^i$ denotes the $i$th entry of the vector $u$ at the $k$th iteration.
\begin{align*}
    \E[(\epsilon_k\cdot\theta)^+]&=\E[((\eta_k^+-\eta_k)\cdot u_k^im_k)^+]\nonumber\\
        &\stackrel{(b)}{=}\E[\frac{(\eta_k^+-\eta_k)\cdot u_k^im_k+|(\eta_k^+-\eta_k)\cdot u_k^im_k|}{2}]\\
        &\stackrel{(c)}{=}\E[\frac{|(\eta_k^+-\eta_k)\cdot u_k^im_k|}{2}]\nonumber\\
        &=\E[\frac{|\eta_k^+-\eta_k|\cdot |u_k^im_k|}{2}]\nonumber\\
        &\stackrel{(d)}{\geq} \frac{c_9c_{10}}{2}.
\end{align*}
In the above, we used (i) the fact that $\max(x,y)=\frac{x+y+|x-y|}{2}$ to infer the equality in $(b)$; (ii) $\E[(\eta_k^+-\eta_k)\cdot u_k^im_k]=0$ since $E[\eta_k^+-\eta_k\mid \mathcal{F}_k]=0$ and $u_k$ is independent of $\mathcal{F}_k$, to infer the equality in $(c)$; and (iii) \cref{def:ass9} in conjunction with $\E_u|u_k^im_k|\geq c_{10}$ for some positive constant $c_{10}$ is used to infer $(c)$.
Thus, condition (iii) of \cref{Them:1} holds. The main claim now follows by an application of \cref{Them:1}.

\end{proof}

\subsection{Non-asymptotic convergence proofs}
\label{sec:appendix-nonasymp}
\subsubsection{Proof of Theorem \ref{coro2}}
\label{pf:coro2}
\begin{proof}
We use the the proof technique from \cite{Bhavsar2022}(in particular, proposition-1 there) in order to proof the main claim here. However unlike \cite{Bhavsar2022} we have a gradient estimate that comes from truncated Cauchy distribution.\\
Lets define $ \alpha_k \equiv (\xi_k,\xi_k^+,u_k,\delta), k\geq 1$ and $\alpha_{[N]}:=(\alpha_1,\alpha_2,...\alpha_N)$ \. Using Taylor series expansion over $f(x_k)$ for any $k=1,2,...,N$ the following is obtained
\begin{equation*}
    \begin{split}
        f(x_{k+1}) &\leq f(x_k)-\gamma_k\langle\nabla f(x_k),G(x_k,\alpha)\rangle+\frac{L}{2}\gamma_k^2 \left\lVert G(x_k,\alpha_k)\right\rVert^2,\\
        &=f(x_k)-c_2\gamma_k\left\lVert\nabla f(x_k)\right\rVert^2- \gamma_k\langle \nabla f(x_k) ,\Gamma_k\rangle
        +\frac{L}{2}\gamma_k^2 \left\lVert G(x_k,\alpha_k)\right\rVert^2.
    \end{split}
\end{equation*}
Here $\Gamma_k \equiv G(x_k,\xi_k,u_k,\delta)-c_2\nabla f (x) \equiv G (x_k,\alpha_k) - c_2\nabla f(x_k)$.
Adding upto $N$-terms both side of these inequalities and considering $f^*\leq f(x_{N+1})$ and $\gamma_k=\gamma$ for all $k$,
we obtain
\begin{align}
    \label{def:eq12}
        \sum_{k=1}^N c_2\gamma \left\lVert\nabla f(x_k)\right\rVert^2
        & \leq f(x_1)-f^*-\sum_{k=1}^N \gamma \langle\nabla f(x_k),\gamma\rangle +\frac{L}{2}\sum_{k=1}^N \gamma ^2\left\lVert G(x_k,\alpha_k)\right\rVert^2.
\end{align}
Now by Lemma \ref{def:lem2} we have
\[\E_{\alpha_{[k]}}[\Gamma_k] =\E_{\alpha_{[k]}}[\Gamma_k|\alpha_{k-1}]=\E_{\alpha_{[k]}}[\Gamma_k|x_k]=\E_{\alpha[k]}[G-c_2\nabla f|x_k]\stackrel{(e)}{\leq} \tau\mathbf{1}_{d\times1},\] 
notice $\tau=c_2'\delta$ where $c_2'=(d+1)B$ is a constant that arise from the Taylor series as defined. In the above vector inequality $(e)$ is element-wise. From \cref{martingle} we have
\[\E_{\alpha_{[k]}}[\lVert G \rVert^2]\leq \lVert\E_{\alpha_{[k]}}[ G]\rVert^2+\frac{C''}{\delta^2}.\]
Hence by taking the expectation w.r.t $\alpha_{[N]}$ on both side of \cref{def:eq12} the following is obtained
\begin{flalign*}
       \sum_{k=1}^N c_2\gamma \E_{\alpha_{[N]}}\left\lVert\nabla f(x_k)\right\rVert^2
        & \leq D+BNc_2'\delta \gamma +\frac{L}{2}\sum_{k=1}^N \gamma^2\Big[\E_{\alpha_{[N]}}\lVert \nabla f(x_k)\lVert^2+2c_2'\delta B+dc_2'^2\delta^2+\frac{C''}{\delta^2}\Big] .
\end{flalign*}
The above inequality uses the fact $-\lVert V \rVert_1\leq \sum_{k=1}^{d} v_k$ for a $d-$dimensional vector $V$ followed by $\lVert \nabla f(x_k)\rVert_1\leq \lVert \nabla f(x_k)\rVert\leq B$ from \cref{def:ass1}. Note that $D=f(x_1)-f^*$.
By rearranging the terms we have
\begin{flalign*}
          \left[c_2\gamma-\frac{L\gamma^2}{2}\right]\sum_{k=1}^N\E_{\alpha_{[N]}}\left\lVert\nabla f(x_k)\right\rVert^2
        &\leq D +BNc_2'\delta \gamma +\frac{LN}{2}\left(2c_2'\delta B+dc_2'^2\delta^2+\frac{C''}{\delta^2}\right) \gamma^2.
\end{flalign*}
Due to the choice of $\gamma_k = \gamma=\min\left\{\frac{c_2}{L},\frac{1}{N^{2/3}}\right\}$, it is obvious that $N\left[c_2\gamma-\frac{L\gamma^2}{2}\right]\geq0$. Thus by dividing both sides of the above inequality by $N\left[c_2\gamma-\frac{L\gamma^2}{2}\right]$ and noting the fact
\begin{equation}
\label{eq98}
    \p_R(k)=Prob(R=k)=\frac{\left[c_2\gamma-\frac{L\gamma^2}{2}\right]}{N\left[c_2\gamma-\frac{L\gamma^2}{2}\right]}=\frac{1}{N},
\end{equation}
the following is obtained
\begin{flalign*}
         \E_{\alpha_{[N]}}\left\lVert\nabla f(x_R)\right\rVert^2
        &\leq\frac{1}{N\left[c_2\gamma-\frac{L\gamma^2}{2}\right]}\bigg[D +BNc_2'\delta \gamma +\frac{LN}{2}\left(2c_2'\delta B+dc_2'^2\delta^2+\frac{C''}{\delta^2}\right) \gamma^2\bigg].
\end{flalign*}
By considering, $\gamma=\frac{c_2}{L}$, the following is obtained 
\begin{equation*}
   Nc_2\gamma\left[1 -\frac{L}{2c_2}\gamma\right]\geq \frac{Nc_2\gamma}{2}.
\end{equation*}
From the above inequality, we can write
\begin{flalign*}
\E\left[\left\lVert\nabla f(x_R)\right\rVert^2\right]
        & \leq \frac{2D} {Nc_2\gamma}+\frac{2Bc_2'\delta}{c_2}+\frac{L}{c_2}\left(2c_2'\delta B+dc_2'^2\delta^2+\frac{C''}{\delta^2}\right)\gamma \\
         &\leq \frac{2D}{Nc_2}\max\left\{\frac{L}{c_2}, N^{2/3}\right\}+\frac{2Bc_2'\delta}{c_2}+\frac{L}{c_2N^{2/3}}\left(2c_2'\delta B+dc_2'^2\delta^2+\frac{C''}{\delta^2}\right)\\
         &\stackrel{(g)}{\leq} \Big(\frac{2DL}{Nc_2^2}+ \frac{2D}{N^{1/3}}\Big)+\frac{2Bc_2'}{c_2N^{1/6}}+\frac{L}{c_2N^{2/3}}\left(\frac{2Bc_2'}{N^{1/6}} +\frac{dc_2'^2}{N^{1/3}}+\frac{C''}{N^{-1/3}}\right)\\
         &= \Big(\frac{2DL}{c_2N}+ \frac{2D}{N^{1/3}}\Big)+\frac{2Bc_2'}{c_2N^{1/6}}+\frac{L}{c_2}\left(\frac{2Bc_2'}{N^{5/6}} +\frac{dc_2'^2}{N}+\frac{C''}{N^{1/3}}\right).\\
\end{flalign*}
Note that $(g)$ uses the condition of $\delta=\frac{1}{N^{1/6}}$. Hence proved.
\end{proof}

\subsubsection{Proof of Theorem \ref{coro1}}
The proof is accumulated from a sequence of lemmas..We follow the technique from \cite{gadimi}, and our proof of the lemmas involves significant deviations owing to the fact that biased gradient information under truncated Cauchy distribution are available instead of unbiased gradient information (under Gaussian distribution). 
\begin{lemma}
\label{def:lem6}
let $f$ be the function satisfying \cref{def:ass1} and \cref{def:ass2} and $\delta$ be the smoothing parameter defined in \cref{alg:alg1}. Then
\begin{equation*}
     \lVert f_\delta(x) \rVert \leq \frac{c_{11}}{d+1}\left [\frac{L\delta^2}{2}+2\delta B\right].
\end{equation*}
\end{lemma}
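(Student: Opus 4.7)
The approach I would take is to Taylor-expand $f(x+u)$ around $x$ and then integrate against the truncated Cauchy density $h_\delta$. By \cref{def:ass2} together with the Hessian bound in \cref{def:ass4}, one can write
\[
f(x+u) - f(x) = \nabla f(x)^T u + R(x,u), \qquad |R(x,u)| \le \tfrac{L}{2}\lVert u \rVert^2.
\]
Applying the triangle inequality inside the expectation defining $f_\delta$, together with Cauchy--Schwarz and the bound $\lVert \nabla f(x)\rVert \le B$, yields
\[
|f_\delta(x)| \le \E_u\bigl[|\nabla f(x)^T u|\bigr] + \tfrac{L}{2}\,\E_u[\lVert u\rVert^2] \le B\,\E_u[\lVert u\rVert] + \tfrac{L}{2}\,\E_u[\lVert u\rVert^2].
\]

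It then remains to control the first two moments of $\lVert u\rVert$ under the truncated Cauchy. The second moment is immediate from \cref{def:lem5} with $r=1$, giving $\E_u[\lVert u\rVert^2] \le c_{11}\delta^2/(d+1)$. The first moment is not covered directly by \cref{def:lem5}, which handles only even moments, so I would compute it by changing variables $u = \delta v$ to reduce to the unit-sphere truncated Cauchy, passing to polar coordinates, and bounding the resulting radial integral
\[
\int_0^1 \frac{\rho^d}{(1+\rho^2)^{(d+1)/2}}\,d\rho \;\le\; \int_0^1 \rho^d\,d\rho = \tfrac{1}{d+1},
\]
in the spirit of the proof of \cref{def:lem5}. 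This yields $\E_u[\lVert u\rVert] \le c_{11}\delta/(d+1)$, and substituting both moment bounds into the earlier display produces the announced estimate, with the constant $2$ in front of $B\delta$ either absorbed as a cushion or obtained by using the sharper $\lVert \nabla f(\xi)\rVert \le B + L\lVert u\rVert \le B + L\delta$ along the Taylor segment in place of the crude $\lVert \nabla f(x)\rVert \le B$.

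The main subtlety is obtaining the first-moment bound with the sharp $1/(d+1)$ scaling. A direct appeal to Jensen's inequality applied to \cref{def:lem5} only delivers $\E_u[\lVert u\rVert] \le \delta\sqrt{c_{11}/(d+1)}$, which scales as $\delta/\sqrt{d+1}$ and is a factor of $\sqrt{d+1}$ too weak; hence the explicit polar-coordinate computation mirroring the proof of \cref{def:lem5} is essential. Beyond this, the remaining ingredients are routine Taylor estimation and a careful tracking of constants.
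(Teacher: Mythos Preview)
Your proposal is correct and follows essentially the same route as the paper: write $f_\delta(x)=\E_u[f(x+\delta u)-f(x)]$, add and subtract the linear term $\delta\langle\nabla f(x),u\rangle$, bound the remainder by $\tfrac{L\delta^2}{2}\E_u[\lVert u\rVert^2]$ via smoothness, bound the linear part by $\delta B\,\E_u[\lVert u\rVert]$ via Cauchy--Schwarz and $\lVert\nabla f\rVert\le B$, and then invoke moment bounds for the truncated Cauchy. The only cosmetic difference is in the first-moment step: the paper effectively runs the computation of \cref{def:lem5} with $r=\tfrac12$ to obtain $\E_u[\lVert u\rVert]\le \tfrac{2c_{11}}{2d+1}\le \tfrac{2c_{11}}{d+1}$, which is where the factor $2$ in front of $\delta B$ comes from, whereas your explicit polar computation gives the slightly sharper $\E_u[\lVert u\rVert]\le \tfrac{c_{11}}{d+1}$ and the $2$ is indeed just cushion.
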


\begin{proof}
\begin{equation*}
\begin{split}
     f_\delta(x) &=\E_u(f(x+\delta u)-f(x)) \\
     &=\E_u[f(x+\delta u)-f(x)-\delta\langle\nabla f(x),u\rangle]+\delta\E_u[\langle\nabla f(x),u \rangle] \\
     |f_\delta(x)| 
     &\stackrel{(d)}{\leq}\frac{L\delta^2}{2}\E_u[\lVert u \rVert^2]+\delta\lVert\nabla f(x) \rVert \E_u[\lVert u \rVert]\\
     &\stackrel{(d)}{\leq}\frac{\delta^2c_{11}L}{2(d+1)}+ \frac{2\delta Bc_{11}}{2d+1}\\
     &\leq \frac{c_{11}}{d+1}\left[\frac{L\delta^2}{2}+2\delta B\right].
\end{split}
\end{equation*}
where $(d)$ follows from smoothness of $f$ and Cauchy-Schwarz inequality and $(e)$ follows from Lemma \ref{def:lem5}. 
\end{proof}

\begin{proposition}
\label{prop1}
The solution of a linear system of equations $Ax=y$, where $A$ is a non-invertible matrix, is $Py$ where $P$ is the generalized inverse of the matrix $A$ that satisfies $APA=A$ and $PAP=P$.
\end{proposition}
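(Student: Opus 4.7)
The plan is to verify the statement under the implicit consistency hypothesis, which is essential for the conclusion to be meaningful. Since $A$ is non-invertible, the system $Ax=y$ admits a solution only when $y$ lies in the range of $A$; assuming this, there exists some $z$ with $Az=y$. With this in hand, I would compute $A(Py)=APy=AP(Az)=(APA)z$, and then apply the defining identity $APA=A$ to conclude that $A(Py)=Az=y$. This shows that $x^{\star}:=Py$ is indeed a solution of $Ax=y$, which is the content of the proposition.

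Observe that only the first identity, $APA=A$, is used in the above derivation. The second identity, $PAP=P$, plays the role of a reflexivity condition that singles out a canonical $P$ within the class of matrices satisfying $APA=A$; this will presumably be relevant when Proposition \ref{prop1} is invoked in the proof of \cref{coro1}, where $c_{12}$ is defined as the Frobenius norm of a specific generalized inverse of $\E_u\!\left(\frac{(d+1)uu^T}{1+\lVert u\rVert^2}\right)$. I would include a one-line remark to justify why both defining relations are imposed, so that the reader does not expect $PAP=P$ to appear in the verification itself.

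The argument is essentially a direct substitution, so the main obstacle is not technical but expository: explicitly stating the consistency hypothesis (without which the claim is vacuous when $y \notin \mathrm{range}(A)$) and clarifying how $Py$ compares to alternative notions of pseudoinverse such as the Moore--Penrose pseudoinverse that would yield minimum-norm least-squares solutions. If the authors in fact require the minimum-norm least-squares interpretation for the downstream application in \cref{coro1}, two additional symmetry conditions (namely, $AP$ and $PA$ being symmetric) would need to be added to the hypotheses, and I would structure the proof so that it extends transparently to that stronger setting while keeping the core one-line verification $A(Py)=(APA)z=Az=y$ unchanged.
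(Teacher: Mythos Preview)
Your verification is correct and is exactly the standard argument: assuming consistency ($y\in\mathrm{range}(A)$), write $y=Az$ and compute $A(Py)=(APA)z=Az=y$. The paper itself does not supply a proof for this proposition; it is stated as a known fact about generalized inverses and is used immediately afterward in the proof of Lemma~\ref{def:lem7}. Your remarks about the consistency hypothesis and about the second identity $PAP=P$ being unnecessary for the bare verification are accurate and appropriate additions.
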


\begin{lemma}
\label{def:lem7}
Under \cref{def:ass1} and \cref{def:ass2}, we have
\begin{enumerate}[label=(\alph*)]
\item 
\begin{equation*}
    \lVert \nabla f(x) \rVert^2 \leq 2c_{12} \lVert\nabla f_\delta(x) \rVert^2+\frac{c_{11}c_{12}\delta^2L^2(d+1)}{2}.
\end{equation*}
\item 
\begin{equation*}
    \lVert \nabla f_\delta(x) \rVert^2 \leq \frac{2c_{11}}{d+1}\left[\frac{(d+1)^2\delta^2L^2}{4}+\lVert\nabla f(x) \rVert^2\right].
\end{equation*}
\end{enumerate}
\end{lemma}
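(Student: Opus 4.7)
The plan is to reduce both inequalities to a single Taylor-expansion decomposition of $\nabla f_\delta(x)$ and then collect the resulting error term using the truncated Cauchy moment bound from Lemma \ref{def:lem5}. Starting from the SF formula \cref{eq3} and a second-order Taylor expansion $f(x+\delta u)-f(x) = \delta u^T\nabla f(x) + \tfrac{\delta^2}{2}u^T\nabla^2 f(\bar x)u$ (with $\bar x$ on the segment from $x$ to $x+\delta u$), one writes
\[\nabla f_\delta(x) = A\,\nabla f(x) + \tfrac{\delta}{2}R(x),\]
where $A = \E_u\bigl[\tfrac{(d+1)uu^T}{1+\lVert u\rVert^2}\bigr]$ and $R(x) = \E_u\bigl[(u^T\nabla^2 f(\bar x)u)\tfrac{(d+1)u}{1+\lVert u\rVert^2}\bigr]$. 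By the symmetry argument used in the proof of Lemma \ref{def:lem2}, $A = c_2\mathbb{I}$; by Lipschitz continuity of $\nabla f$ we have $\lVert\nabla^2 f(\bar x)\rVert_{\mathrm{op}} \leq L$, so the triangle inequality and Lemma \ref{def:lem5} give $\lVert R(x)\rVert \leq L(d+1)\E_u[\lVert u\rVert^3] \leq L c_{11}$, using $\lVert u\rVert\leq 1$ on the truncated support to dominate $\lVert u\rVert^3$ by $\lVert u\rVert^2$.

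For part (a), I would treat the decomposition as a linear system $A\nabla f(x) = \nabla f_\delta(x) - \tfrac{\delta}{2}R(x)$ and invoke Proposition \ref{prop1}: pre-multiplying by the generalized inverse $P$ of $A$ yields $\nabla f(x) = P\bigl(\nabla f_\delta(x) - \tfrac{\delta}{2}R(x)\bigr)$. Applying the matrix-vector bound $\lVert Py\rVert \leq \lVert P\rVert_F\lVert y\rVert$ with $\lVert P\rVert_F = c_{12}$, squaring, using $\lVert a-b\rVert^2 \leq 2\lVert a\rVert^2 + 2\lVert b\rVert^2$, and substituting the bound on $\lVert R(x)\rVert$ together with the $(d+1)$ factor that emerges from rearranging, produces the stated right-hand side.

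For part (b), the cleaner route that matches the precise prefactor $\tfrac{2c_{11}}{d+1}$ is to bound $\lVert\nabla f_\delta(x)\rVert^2$ at the integrand level via Jensen applied to \cref{eq3}, and then split $|f(x+\delta u)-f(x)|^2 \leq 2\delta^2(u^T\nabla f(x))^2 + \tfrac{L^2\delta^4\lVert u\rVert^4}{2}$ before taking expectation. The first piece becomes $(\nabla f(x))^T \E_u\bigl[\tfrac{(d+1)^2 u u^T\lVert u\rVert^2}{(1+\lVert u\rVert^2)^2}\bigr]\nabla f(x)$, which by symmetry is a scalar multiple of $\lVert\nabla f(x)\rVert^2$ with the scalar controlled by Lemma \ref{def:lem5}; the second piece collapses to the $L^2\delta^2$ term via the same moment inequality applied to $\lVert u\rVert^6$ (again dominated by $\lVert u\rVert^2$).

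The main obstacle is the bookkeeping of constants: both bounds have specific prefactors ($c_{12}$ in (a), $\tfrac{2c_{11}}{d+1}$ in (b)) that must come out cleanly, which rules out crude triangle-inequality or operator-norm estimates and forces one to exploit the identity $A = c_2\mathbb{I}$, the definition of $c_{12}$ as $\lVert A^{+}\rVert_F$, and the $1/(d+1)$ factor from the truncated Cauchy moment bound. The Taylor-remainder and symmetry arguments themselves are routine; essentially all the care lies in aligning the moment powers with the smoothing and Lipschitz constants.
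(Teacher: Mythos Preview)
Your plan matches the paper's proof almost exactly: for (a) the paper also writes $A\nabla f(x)$ with $A=\E_u\bigl[\tfrac{(d+1)uu^T}{1+\lVert u\rVert^2}\bigr]$, applies the generalized inverse $P$ from Proposition~\ref{prop1}, and bounds via $\lVert Py\rVert\le \lVert P\rVert_F\lVert y\rVert=c_{12}\lVert y\rVert$; for (b) it likewise pushes the square inside by Jensen on \eqref{eq3}, splits $|f(x+\delta u)-f(x)|^2$ exactly as you do, and closes with Lemma~\ref{def:lem5} on $\E_u\lVert u\rVert^4$ and $\E_u\lVert u\rVert^6$. One small technical point: you invoke $\nabla^2 f(\bar x)$ via a second-order Taylor expansion, but the lemma is stated only under \cref{def:ass2} (Lipschitz gradient), which does not give twice differentiability; the paper instead uses the descent inequality $|f(x+\delta u)-f(x)-\delta\langle\nabla f(x),u\rangle|\le \tfrac{L\delta^2}{2}\lVert u\rVert^2$ directly, which is all your argument actually needs.
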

\begin{proof}
Notice that
\begin{equation}
\label{def:eq11}
    \begin{split}
        \E_u \left[\frac{(d+1)u}{1+\lVert u \rVert^2}\langle \nabla f(x),u\rangle\right]
        &= A\nabla f(x)\\
        \end{split}
\end{equation}
where $A=\E_u\left(\frac{(d+1) uu^T}{1+\lVert u \rVert^2}\right)$ is a matrix. Let $P$ be the generalized inverse of $A$ which satisfies the condition in \cref{prop1}. Now

\begin{align*}
        \nabla f(x) &\stackrel{(e)}{=}P\cdot\E\bigg[\frac{(d+1)u}{1+\lVert u \rVert^2}\langle \nabla f(x),u\rangle\bigg],\\
        &= \frac{P}{\delta} \E\left[\frac{(d+1)u}{1+\lVert u \rVert^2}\{f(x+\delta u)-f(x)\}\right]
        -\frac{P}{\delta}\E\bigg[\frac{(d+1)u}{1+\lVert u \rVert^2}\{f(x+\delta u)-f(x)
         -\delta\langle \nabla f(x),u\rangle\}\bigg],\\
\end{align*}
$(e)$ is a simple application of Proposition \ref{prop1}. Let $\lVert\cdot\rVert_F$ denotes the Frobenius norm of the matrix. Then $\lVert Px \rVert_2\leq c_{12}\lVert x \rVert_2$ where $c_{12}=\lVert P\rVert_F$
\begin{align*}
        \lVert\nabla f(x)\rVert^2 &\stackrel{(f)}{\leq} 2\lVert P \rVert_F^2 \lVert \nabla f_\delta(x) \rVert^2+\frac{2\lVert P \rVert_F^2}{\delta^2}\E\left[\frac{(d+1)^2\lVert u \rVert^6 L^2 \delta^4}{4(1+\lVert u \rVert^2)^2}\right],\\
        &=2c_{12} \lVert\nabla f_\delta(x) \rVert^2+
        \frac{c_{12}\delta^2L^2(d+1)^2}{2}\E\left[\frac{\lVert u \rVert^6}{(1+\lVert u \rVert^2)^2}\right],\\
        &\leq 2c_{12} \lVert\nabla f_\delta(x) \rVert^2+\frac{c_{12}\delta^2L^2(d+1)^2}{2}\E\left[\lVert u \rVert^6\right],\\
        &\leq 2c_{12} \lVert\nabla f_\delta(x) \rVert^2+\frac{c_{12}\delta^2L^2(d+1)^2}{2}\E\left[\lVert u \rVert^6\right]\\
        &\leq 2c_{12} \lVert\nabla f_\delta(x) \rVert^2+\frac{c_{11}c_{12}\delta^2L^2(d+1)^2}{2(d+3)},\\
        &\leq 2c_{12} \lVert\nabla f_\delta(x) \rVert^2+c_{11}c_{12}\delta^2L^2(d+1),\\
\end{align*}
where $(f)$ uses the Cauchy-Schwarz inequality followed by Lemma \ref{def:lem5}.

We now prove the second claim.
\begin{equation*}
   \begin{split}
        \left[f(x+\delta u)-f(x)\right]^2 &=[f(x+\delta u)-f(x) -\delta\langle\nabla f(x),u\rangle+\delta\langle\nabla f(x),u\rangle]^2,\\
        & \leq 2\left(\frac{\delta^2}{2} L \left\lVert u \right\lVert^2\right)^2 + 2\delta^2\langle \nabla f(x),u\rangle^2,\\
        & \leq 2\left(\frac{\delta^2}{2} L \left\lVert u \right\lVert^2\right)^2 + 2\delta^2 \lVert\nabla f(x)\rVert^2\left\lVert u\right\rVert^2.\\
    \end{split} 
\end{equation*}
Now, by taking the norm in the both side of the inequality and employing \cref{def:lem5} the following is obtained 
\begin{flalign*}
        &\lVert \nabla f_\delta(x) \rVert^2
        \leq \frac{1}{\delta^2}\E_u \left[\left(f(x+\delta u)-f(x)\right)^2 \frac{(d+1)^2\left\lVert u \right\lVert^2}{(1+\lVert u \rVert^2)^2}\right],\\
        &\leq \frac{((d+1)\delta L)^2}{2} \E_u\left[\frac{\left\lVert u \right\lVert^6}{(1+\lVert u \rVert^2)^2}\right]
        +2\lVert\nabla f(x)\rVert^2(d+1)^2\E_u \left[\frac{\left\lVert u \right\lVert^4}{(1+\lVert u \rVert^2)^2}\right],\\
        &\leq \frac{((d+1)\delta L)^2}{2} \E_u\left\lVert u \right\lVert^6+2\lVert\nabla f(x)\rVert^2(d+1)^2\E_u \left\lVert u \right\lVert^4,\\
        &\leq \frac{((d+1)\delta L)^2c_{11}}{2(d+3)}+\frac{2\lVert\nabla f(x)\rVert^2c_{11}}{d+2},\\
        &\leq \frac{2c_{11}}{d+1}\left[\frac{(d+1)^2\delta^2L^2}{4}+\lVert\nabla f(x) \rVert^2\right].
\end{flalign*}
\end{proof}

In the following lemma we state and prove a general result that holds true for any choice of non-increasing stepsize sequence, smoothing parameter. Subsequently, we specialize the result for the choice of parameters suggested in \cref{coro1}, to prove the same.

\begin{lemma}
\label{Them:2}
Suppose, $x_k$ generated by \cref{alg:alg1} and $\{\gamma_k\}$ be the desired step-sizes for the iteration. Let and the probability mass function $P_R(\cdot)$  are chosen such that 
$\gamma_k\leq\frac{1}{2Lc_{13}}$ and 
\begin{equation}
\label{prob1}
    P_R(k):=Prob(R=k)=\frac{\left[\gamma_k -Lc_{13}\gamma_k^2\right]}{\sum_{k=1}^N\left[\gamma_k -Lc_{13}\gamma_k^2\right]},
\end{equation}
where $c_{13} = \frac{4c_{11}c_{12}}{d+1}.$ and $c_{11}$,$c_{12}$ are defined in \cref{def:eq10} ,\cref{def:eq11} respectively. Then under \cref{def:ass1},\cref{def:ass10} and for any $N\geq 1$
\begin{flalign}
\label{eq:lem6}
\E\left[\left\lVert\nabla f(x_R)\right\rVert^2\right]
&\leq \frac{1}{\sum_{k=1}^N\left[\gamma_k -Lc_{13}\gamma_k^2\right]}\bigg[\frac{L\delta^2c_{13}}{2}+2c_{13}\delta B +L^2\delta^2d^2c_{13}\sum_k\gamma_k\\
        &+Lc_{13}\left(\frac{d\delta^2L^2}{4}+ \sigma^2\right)\sum \gamma_k^2\bigg].
\end{flalign}
\end{lemma}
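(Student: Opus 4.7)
The plan is to adapt the Ghadimi--Lan descent-lemma template for zeroth-order SGD to the truncated Cauchy smoothing oracle. Starting from the update $x_{k+1} = x_k - \gamma_k G(x_k,\xi_k,u_k,\delta)$ and applying the $L$-smoothness of $f$ from Assumption \ref{def:ass2} gives
\begin{equation*}
f(x_{k+1}) \leq f(x_k) - \gamma_k\langle\nabla f(x_k), G_k\rangle + \tfrac{L}{2}\gamma_k^2\lVert G_k\rVert^2.
\end{equation*}
Taking conditional expectation given $x_k$ and invoking $\E[G_k \mid x_k] = \nabla f_\delta(x_k)$, which holds for the common-random-number estimator \eqref{eq:grad-est-crn} under Assumption \ref{def:ass10} (as observed immediately above the lemma), reduces the cross term to $\langle\nabla f(x_k),\nabla f_\delta(x_k)\rangle$.

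The second step is to isolate $\lVert\nabla f(x_k)\rVert^2$ on the right-hand side. Writing $\langle\nabla f,\nabla f_\delta\rangle = \lVert\nabla f\rVert^2 - \langle\nabla f,\nabla f - \nabla f_\delta\rangle$ and applying Young's inequality together with Lemma \ref{def:lem7}(a)---which trades $\lVert\nabla f\rVert^2$ against $c_{13}\lVert\nabla f_\delta\rVert^2$ at an $O(\delta^2)$ cost---produces a lower bound proportional to $\lVert\nabla f(x_k)\rVert^2$ and fixes the constant $c_{13} = 4c_{11}c_{12}/(d+1)$. For the second-moment term I would split $\E\lVert G_k\rVert^2$ into $\lVert\nabla f_\delta(x_k)\rVert^2$ plus the variance of $G_k$: the former is handled by Lemma \ref{def:lem7}(b), reintroducing $\lVert\nabla f(x_k)\rVert^2$ modulo an $O(\delta^2 L^2 d)$ remainder, while the variance is controlled by combining the $\sigma^2$-bound from Assumption \ref{def:ass1} with the Taylor remainder $F(x+\delta u,\xi)-F(x,\xi)-\delta\langle\nabla F(x,\xi),u\rangle = O(\delta^2 L\lVert u\rVert^2)$ granted by Assumption \ref{def:ass10}, after which the truncated Cauchy moment bounds of Lemma \ref{def:lem5} convert the $u$-dependence into dimension-correct constants. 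Rearranging gives a per-step inequality of the form
\begin{equation*}
\gamma_k(1 - Lc_{13}\gamma_k)\,\E\lVert\nabla f(x_k)\rVert^2 \leq \E[f(x_k)-f(x_{k+1})] + (\text{bias})\cdot\gamma_k + (\text{variance})\cdot\gamma_k^2,
\end{equation*}
and the step-size ceiling $\gamma_k \leq 1/(2Lc_{13})$ keeps the leading coefficient nonnegative. Summing from $k=1$ to $N$ telescopes the function-value terms; the initial-final $f$-gap is routed through $f_\delta$ and dominated via Lemma \ref{def:lem6}, producing the constants $\frac{L\delta^2 c_{13}}{2} + 2c_{13}\delta B$ in the stated bound. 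Dividing by $\sum_k[\gamma_k - Lc_{13}\gamma_k^2]$ and recognising the prescribed $P_R$ on the left converts the weighted average into $\E\lVert\nabla f(x_R)\rVert^2$.

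The main obstacle will be the variance estimate for $G_k$: the weighting $(d+1)u/(1+\lVert u\rVert^2)$ couples the perturbation magnitude to the gradient in a nonlinear way that the Gaussian analysis of \cite{gadimi} sidesteps, so the clean noise/signal decomposition used there does not transfer. Keeping the constants matched to $c_{13}$ and preventing spurious dimensional blow-ups requires careful moment calculations for the truncated Cauchy variates via Lemma \ref{def:lem5}, combined with the uniform bound $(1+\lVert u\rVert^2)^{-1} \leq 1$ and Assumption \ref{def:ass10} to separate the $\sigma^2$ part of the variance from the higher-order $O(\delta^2 L^2 d)$ remainder.
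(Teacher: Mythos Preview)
Your proposal diverges from the paper's argument in one structural choice that then propagates through every step: the paper applies the descent lemma to the \emph{smoothed} function $f_\delta$, not to $f$. That is, the starting inequality is
\[
f_\delta(x_{k+1}) \;\leq\; f_\delta(x_k) - \gamma_k\langle\nabla f_\delta(x_k), G_k\rangle + \tfrac{L}{2}\gamma_k^2\lVert G_k\rVert^2,
\]
so after conditioning the inner product splits as $-\gamma_k\lVert\nabla f_\delta(x_k)\rVert^2 - \gamma_k\langle\nabla f_\delta(x_k),\Gamma_k\rangle$ with $\Gamma_k = G_k - \nabla f_\delta(x_k)$, and the cross term has \emph{exactly} zero mean because $\E[G_k\mid x_k]=\nabla f_\delta(x_k)$. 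Lemma~\ref{def:lem7}(a) is then applied in the natural direction: the left-hand side carries $\lVert\nabla f_\delta\rVert^2$, and the inequality $\lVert\nabla f\rVert^2 \le 2c_{12}\lVert\nabla f_\delta\rVert^2 + O(\delta^2)$ is rearranged to lower-bound $\lVert\nabla f_\delta\rVert^2$ by $\tfrac{1}{2c_{12}}\lVert\nabla f\rVert^2$ minus a remainder. For the second moment, the paper applies Lemma~\ref{def:lem7}(b) with $F(\cdot,\xi)$ in place of $f$ (valid under Assumption~\ref{def:ass10}), obtaining a bound in $\lVert\nabla F(x_k,\xi_k)\rVert^2$, and then invokes $\E\lVert\nabla F\rVert^2 \le 2\lVert\nabla f\rVert^2 + 2\sigma^2$. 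Finally, the telescoped term is $f_\delta(x_1)-f_\delta^*$, and \emph{this} is what Lemma~\ref{def:lem6} bounds: $|f_\delta(x)| \le \tfrac{c_{11}}{d+1}\bigl[\tfrac{L\delta^2}{2}+2\delta B\bigr]$, which after multiplication by $2c_{12}$ yields exactly the constants $\tfrac{L\delta^2 c_{13}}{2}+2c_{13}\delta B$ in the statement.

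Your route---descent lemma on $f$---runs into two concrete problems. First, the cross term becomes $\langle\nabla f,\nabla f_\delta\rangle$, and your plan to lower-bound it via Young plus Lemma~\ref{def:lem7}(a) does not work: after Young you need a bound on $\lVert\nabla f-\nabla f_\delta\rVert^2$, but Lemma~\ref{def:lem7}(a) bounds $\lVert\nabla f\rVert^2$ in terms of $\lVert\nabla f_\delta\rVert^2$, which is the wrong quantity and the wrong direction. (One can instead use the Taylor relation $\nabla f_\delta = c_2\nabla f + O(\delta)$ directly, but that brings in $c_2$, not $c_{13}$, and is not what you wrote.) Second, your telescoping produces $f(x_1)-f^*$, which is simply the constant $D$; there is no mechanism to ``route it through $f_\delta$'' and recover the $\tfrac{L\delta^2 c_{13}}{2}+2c_{13}\delta B$ constants via Lemma~\ref{def:lem6}, since that lemma bounds $|f_\delta(x)|$, not $f(x)-f^*$. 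Those specific constants in the stated bound are a fingerprint of the $f_\delta$-descent argument and cannot be reproduced from the $f$-descent path you describe.
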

\label{pf:Them2}
\begin{proof}
Let $ \alpha_k \equiv (\xi_k,u_k), k\geq 1$ and $\alpha_{[N]}:=(\alpha_1,\alpha_2,...\alpha_N)$. \ 
Denote $\Gamma_k \equiv G_\delta(x_k,\xi_k,u_k)-\nabla f_\delta (x) \equiv G_\delta (x_k,\alpha_k) - \nabla f_\delta(x)$. Now, by Taylor series expansion $f_\delta(x_{k+1})$ we have for any $k=1,2,...,N$,

\begin{equation*}
    \begin{split}
        f_\delta(x_{k+1}) &\leq f_\delta(x_k)-\gamma_k\langle\nabla f_\delta(x_k),G_\delta(x_k,\alpha)\rangle+\frac{L}{2}\gamma_k^2 \left\lVert G_\delta(x_k,\alpha_k)\right\rVert^2\\
        &=f_\delta(x_k)-\gamma_k\left\lVert\nabla f_\delta(x_k)\right\rVert^2- \gamma_k\langle \nabla f_\delta(x_k) ,\Gamma_k\rangle
        +\frac{L}{2}\gamma_k^2 \left\lVert G_\delta(x_k,\alpha_k)\right\rVert^2.
    \end{split}
\end{equation*}
Adding upto $N$-terms both side of these inequalities and applying $f_\delta^*\leq f_\delta(x_{N+1})$,
we obtain
\begin{align}
    \label{def:eq5}
        \sum_{k=1}^N \gamma_k \left\lVert\nabla f_\delta(x_k)\right\rVert^2
        & \leq f_\delta(x_1)-f_\delta^*-\sum_{k=1}^N \gamma_k \langle\nabla f_\delta(x_k),\Gamma_k\rangle +\frac{L}{2}\sum_{k=1}^N \gamma_k ^2\left\lVert G_\delta(x_k,\alpha_k)\right\rVert^2.
\end{align}
From the unbiased property of $G_\delta(x_k,u_k,\xi_k)$ the following holds
\begin{equation*}
    \E[\langle\nabla f_\delta(x_k),\Gamma_k\rangle|\alpha_{[k-1]}]=0.
\end{equation*}
Now by \cref{def:ass10} and Lemma \ref{def:lem7}(b)
\begin{equation}
\label{eq}
\begin{split}
    \E[\left\lVert G_\delta(x_k,\alpha_k) \right\rVert^2|\alpha_{[k-1]}] 
    &\leq \frac{2c_{11}}{d+1}\left[\E\lVert \nabla F(x_k,\xi_k) \rVert^2|\alpha_{[k-1]}]+\frac{(d+1)^2\delta^2L^2}{4}\right]\\
     &\leq  \frac{2c_{11}}{d+1}\left[2\E[\left\lVert \nabla f(x_k) \right\rVert^2|\alpha_{[k-1]}+\sigma^2]+\frac{(d+1)^2\delta^2L^2}{4}\right].
\end{split}
\end{equation}

Note that,the second inequality implies from variance bound of \cref{def:ass1}. Taking expectations with respect to $\alpha_{[N]}$ on both sides of \cref{def:eq5}, we have
\begin{flalign*}
&\sum_{k=1}^N \gamma_k \E_{\alpha_{[N]}}\left\lVert\nabla f_\delta(x_k)\right\rVert^2 \\
&\leq f_\delta(x_1)-f_\delta^*+
        \frac{L}{2}\sum_{k=1}^N \gamma_k^2\bigg[\frac{2c_{11}}{d+1}\Big[2\E[\left\lVert \nabla f(x_k) \right\rVert^2|\alpha_{[k-1]}+\sigma^2]
        +\frac{(d+1)^2\delta^2L^2}{4}\Big]\bigg].\\
\end{flalign*}
Applying Lemma \ref{def:lem7}(a) in the r.h.s and rearranging the term we obtain
\begin{flalign*}
        &\sum_{k=1}^N \gamma_k \left[\frac{1}{2c_{12}}\E_{\alpha_{[N]}}\left\lVert\nabla f(x_k)\right\rVert^2-\frac{c_{11}(d+1)L^2\delta^2}{2}\right]\\
        &\leq f_\delta(x_1)-f_\delta^* +\frac{LNc_{11}}{d+1}\left(\frac{(d+1)\delta^2L^2}{4}+ 2\sigma^2\right) \sum_{k=1}^N\gamma_k^2+\frac{2Lc_{11}}{d+1}\sum_{k=1}^N \gamma_k^2\E[\lVert\nabla f(x_k) \rVert^2].
\end{flalign*}
From \cref{def:lem6} and using the fact  $f_\delta(x_1)-f_\delta^*\leq |f_\delta(x_1)-f_\delta^*|\leq |f_\delta(x_1)|+|f_\delta^*|$ one can have
\begin{flalign*}
        &\sum_{k=1}^N \left[\frac{\gamma_k}{2c_{12}}-\frac{2Lc_{11}\gamma_k^2}{d+1}\right] \E_{\alpha_{[N]}}\left\lVert\nabla f(x_k)\right\rVert^2\\ &\leq \frac{c_{11}L\delta^2}{d+1}+\frac{4c_{11}\delta B}{d+1} +\frac{NL^2\delta^2(d+1)c_{11}}{2}\sum_{k=1}^N\gamma_k+\frac{LNc_{11}}{d+1}\left(\frac{(d+1)\delta^2L^2}{4}+ 2\sigma^2\right) \sum_{k=1}^N\gamma_k^2.
\end{flalign*}
Thus by rearranging the above inequality we obtain

\begin{flalign*}
        &\sum_{k=1}^N \left[\gamma_k-\frac{4c_{11}c_{12}L\gamma_k^2}{d+1}\right] \E_{\alpha_{[N]}}\left\lVert\nabla f(x_k)\right\rVert^2\\ &\leq \frac{2c_{11}c_{12}L\delta^2}{d+1}+\frac{8c_{11}c_{12}\delta B}{d+1} +L^2\delta^2(d+1)c_{11}c_{12}\sum_{k=1}^N\gamma_k+\frac{2Lc_{11}c_{12}}{d+1}\left(\frac{(d+1)\delta^2L^2}{4}+ 2\sigma^2\right)\sum_{k=1}^N \gamma_k^2.
\end{flalign*}
Let $c_{13} = \frac{4c_{11}c_{12}}{d+1}$
\begin{flalign}
\label{eq:lem6_1}
        &\sum_{k=1}^N \left[\gamma_k-Lc_{13}\gamma_k^2\right] \E_{\alpha_{[N]}}\left\lVert\nabla f(x_k)\right\rVert^2\\ &\leq \frac{L\delta^2c_{13}}{2}+2c_{13}\delta B +\frac{L^2\delta^2(d+1)^2c_{13}}{4}\sum_{k=1}^N\gamma_k+\frac{Lc_{13}}{2}\left(\frac{(d+1)\delta^2L^2}{4}+ \sigma^2\right)\sum_{k=1}^N \gamma_k^2.
\end{flalign}
Dividing both sides of the above inequality by $\sum_{k=1}^N\left[\gamma_k -Lc_{13}\gamma_k^2\right]$ and notice that
\begin{equation}
\label{eq100}
    \begin{split}
        \E\left[\left\lVert\nabla f(x_R)\right\rVert^2\right]&=\E_{R,\alpha_{[N]}}\left[\left\lVert\nabla f(x_R)\right\rVert^2\right] 
        =\frac{\sum_{k=1}^N \left[\gamma_k -Lc_{13}\gamma_k^2\right]\E_{\alpha_{[N]}}\left\lVert\nabla f(x_k)\right\rVert^2}{\sum_{k=1}^N\left[\gamma_k -Lc_{13}\gamma_k^2\right]},
    \end{split}
\end{equation}
we obtain \cref{eq:lem6} by replacing $d+1$ with $2d$ in \cref{eq:lem6_1}.

\end{proof}
We now specialize the result obtained from \cref{Them:2} to get the tight bound for \cref{alg:alg1} as describe in \cref{coro1}.
\subsubsection*{Proof of Theorem \ref{coro1}}
\begin{proof}
\label{pf:coro1}
Recall the step-size $\gamma_k=\gamma$, smoothing parameter $\delta_k=\delta,\forall k\geq 1,$ where
\begin{equation}
\label{eq-97}
    \begin{split}
        \gamma_k=\gamma &=\min\left\{\frac{1}{2Lc_{13}},\frac{1}{c_{13}\sigma\sqrt{N}}\right\}\ , \delta = \frac{1}{L\sqrt{dNc_{13}}}.
    \end{split}
\end{equation}
From the above condition of $\gamma$ by considering $\gamma\leq\frac{1}{2Lc_{13}}$ we have
\begin{equation}
    N\gamma\left[1 -Lc_{13}\gamma\right]\geq \frac{N\gamma}{2}.
\end{equation}
Henceforth, from the above inequality and by \cref{Them:2}, we obtain
\begin{flalign*}
       &\E\left[\left\lVert\nabla f(x_R)\right\rVert^2\right]\\
        & \stackrel{(h)}{\leq} \frac{L\delta^2c_{13}}{N\gamma}+\frac{4\delta Bc_{13}}{N\gamma}+2L^2\delta^2d^2c_{13}+2Lc_{13}\left(\frac{d\delta^2L^2}{4}+\sigma^2\right)\gamma, \\
        & \leq \left(\frac{L\delta^2c_{13}}{N}+\frac{4\delta Bc_{13}}{N}\right) \max\{2Lc_{13},\sigma c_{13} \sqrt{N}\}+2L^2\delta^2d^2c_{13}+ \frac{d\delta^2L^2}{4}+\frac{2\sigma L}{\sqrt{N}},\\
        &\leq \left(\frac{L\delta^2c_{13}}{N}+\frac{4\delta Bc_{13}}{N}\right)(2Lc_{13}+\sigma c_{13}\sqrt{N})+L^2\delta^2d(2c_{13}d+1)+\frac{4\sigma L}{\sqrt{N}},\\
        &\stackrel{(k)}{=} \frac{c_{13}^2}{N}(L\delta^2+4\delta B)(2L+\sigma \sqrt{N})+L^2\delta^2d(2c_{13}d+1)+\frac{2\sigma L}{\sqrt{N}},\\
        &\leq \frac{c_{13}}{LN}(\frac{1}{Nd}+\frac{4 B\sqrt{c_{13}}}{\sqrt{Nd}})(2L+\sigma \sqrt{N})+\frac{(2c_{13}d+1)}{Nc_{13}}+\frac{2\sigma L}{\sqrt{N}},\\
        &=\frac{c_{13}}{LN}(\frac{2L}{Nd}+\frac{\sigma}{d\sqrt{N}}+\frac{8LB\sqrt{c_{13}}}{\sqrt{dN}}+\frac{\sigma B\sqrt{c_{13}}}{\sqrt{d}})+\frac{(2c_{13}d+1)}{Nc_{13}}+\frac{2\sigma L}{\sqrt{N}},\\
        &\leq \frac{c_{13}}{LN}(2L+\sigma+8LB\sqrt{c_{13}}+\sigma B\sqrt{c_{13}})+\frac{(2c_{13}d+1)}{Nc_{13}}+\frac{2\sigma L}{\sqrt{N}},\\
        &=\frac{c_{14}}{N}+\frac{4\sigma L}{\sqrt{N}}.
\end{flalign*}
Where $c_{14}=2L+c_{13}\sigma+8LBc_{13}^{3/2}+\frac{\sigma B c_{13}^{3/2}B}{L}+\frac{2c_{13}d+1}{L}$. Notice that, $(h)$ uses the condition of $\gamma_k$ in \cref{eq-97}  and $(k)$ follows from the condition of $\delta=\frac{1}{L\sqrt{dNc_{13}}}$. Thus by rearranging the terms as noting the fact that $1/Nd \leq 1$ we get the convergence rate $\mathcal{O}(1/\sqrt{N}).$
\end{proof}

\subsection{Non-asymptotic analysis for the balanced estimator}
\label{appendix-D}
Till now we have covered the analysis with imbalance gradient estimator. However, with this estimator we will get high bias which is $\mathcal{O}(\delta_k)$. We will now introduce a balanced gradient estimator and later we will show how it will help to achieve low bias as compared to the previous one.\\ 
Recall the finite difference gradient estimate is defined as
\begin{equation}
\label{eq-1}
    \begin{split}
    \nabla f_\delta(x) =\frac{1}{\delta} \E_{h(u)} \left[(f(x+\delta u)-f(x)) 
        \frac{(d+1)u}{(1+\lVert u \rVert^2)}\right].\\
    \end{split}
\end{equation}
Note that by change of variable we can rewrite \cref{eq-1} as
\begin{equation}
\label{eq-2}
    \begin{split}
    \nabla f_\delta(x) =\frac{1}{\delta} \E_{h(u)} \left[(f(x)-f(x-\delta u)) 
        \frac{(d+1)u}{(1+\lVert u \rVert^2)}\right].\\
    \end{split}
\end{equation}
Now by summing up \cref{eq-1} and \cref{eq-2} one can get the balanced estimator as
\begin{equation}
\label{eq-3}
    \begin{split}
    \nabla f_\delta(x) =\frac{1}{2\delta} \E_{h(u)} \left[(f(x+\delta u)-f(x-\delta u)) 
        \frac{(d+1)u}{(1+\lVert u \rVert^2)}\right].\\
    \end{split}
\end{equation}
Thus for the case of noisy function measurements the balanced estimator is
\begin{flalign}
\label{eq-4}
    \tilde G (x_k,\xi_k^+,\xi_k^-,u_k,\delta_k)
&\stackrel{\triangle}{=} {\displaystyle 
\bigg(\frac{F(x_k+\delta_k u_k,\xi_k^+)-F(x_k-\delta_ku_k,\xi_k^-)}{2\delta_k}\bigg)\frac{(d+1)u_k}{1+\lVert u_k \rVert^2}}.
\end{flalign}

\subsubsection{Proof of Theorem \ref{prop4}}
\label{pf:th4}
\begin{lemma}
\label{prop3}
Under \cref{def:ass3}-\cref{def:ass6}, and $\tilde G$ defined in \cref{balanced} 
we have almost surely
\begin{align}
	\label{eq:grad-bias_1}
        \E[\tilde G (x_k,\xi_k,\xi_k^+, u_k,\delta_k)|\mathcal{F}_k]=c_2\nabla f(x_k)+ c_2''\delta^2_k \mathbf{1}_{d},
\end{align}
where $c_2=\E_{h(u)}\left[\frac{(d+1)(u_k^i)^2}{1+\lVert u_k \rVert^2}\right]$, $c_2''=\frac{B_1d^4}{3}$ and $\mathbf{1}_{d}$ is the $d$-dimensional vector of all ones. 
\end{lemma}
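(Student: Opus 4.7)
The proof follows the same template as Lemma \ref{def:lem2}, but exploits the cancellation of even-order Taylor terms afforded by the balanced estimator. The plan is to expand both $f(x_k + \delta_k u_k)$ and $f(x_k - \delta_k u_k)$ around $x_k$ to \emph{third} order (which is available under \cref{def:ass4}), subtract them, and observe that the constant and quadratic terms cancel pairwise.

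Concretely, write
\[
f(x_k \pm \delta_k u_k) = f(x_k) \pm \delta_k u_k^T \nabla f(x_k) + \tfrac{\delta_k^2}{2} u_k^T \nabla^2 f(x_k) u_k \pm \tfrac{\delta_k^3}{6}\, T^{\pm}_k,
\]
where $T^{\pm}_k = \sum_{i,j,\ell} \nabla^3_{ij\ell} f(\bar{x}^{\pm}_k)\, u_k^i u_k^j u_k^\ell$ with $\bar{x}^{\pm}_k$ on the segment between $x_k$ and $x_k \pm \delta_k u_k$. Subtracting and dividing by $2\delta_k$ yields
\[
\frac{f(x_k + \delta_k u_k) - f(x_k - \delta_k u_k)}{2\delta_k} \;=\; u_k^T \nabla f(x_k) \;+\; \frac{\delta_k^2}{12}\bigl(T^{+}_k + T^{-}_k\bigr).
\]
Now inject the noise by adding $\frac{\eta_k^+ - \eta_k^-}{2\delta_k}$, multiply by the weighting vector $\frac{(d+1)u_k}{1+\|u_k\|^2}$, and take the conditional expectation given $\mathcal{F}_k$.

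The noise contribution vanishes: since $u_k$ is independent of $\mathcal{F}_k$ and $\E[\eta_k^\pm \mid \mathcal{F}_k] = 0$, we have $\E\!\left[\frac{(\eta_k^+ - \eta_k^-)}{2\delta_k}\frac{(d+1)u_k}{1+\|u_k\|^2}\,\middle|\,\mathcal{F}_k\right] = 0$. The leading deterministic term is handled exactly as in the proof of \cref{def:lem2}: by \eqref{eq:4.6},
\[
\E_u\!\left[\frac{(d+1)u_k u_k^T}{1+\|u_k\|^2}\right]\nabla f(x_k) \;=\; c_2\, \nabla f(x_k).
\]
It remains to bound the third-order remainder coordinate-wise. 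For the $m$-th coordinate, the contribution is
\[
\frac{\delta_k^2}{12}\,\E_u\!\left[\frac{(d+1) u_k^m}{1+\|u_k\|^2}\bigl(T^+_k + T^-_k\bigr)\right],
\]
and using $|\nabla^3_{ij\ell} f| \le B_1$ (from \cref{def:ass4}), $\|u_k\| \le 1$ by truncation, and $(1+\|u_k\|^2)^{-1} \le 1$, each such coordinate is bounded in absolute value by $\tfrac{1}{6}(d+1) B_1 d^3 \le \tfrac{B_1 d^4}{3}$, which matches the stated $c_2'' = B_1 d^4 /3$. Packaging these bounds as the vector $c_2'' \delta_k^2 \mathbf{1}_d$ gives the claim.

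The only mildly delicate step is the coordinate-wise bookkeeping for the third-order remainder: one must verify that the odd symmetry in $u_k^m$ does not reduce the constant further than claimed, and that the truncation (via $\|u_k\|\le 1$) is what allows us to convert $\E[|u_k^m| \cdot |T^\pm_k|]$ into a clean dimensional factor. Everything else is a direct adaptation of the Taylor-expansion argument used for the one-sided estimator, with the key gain being the cancellation of the $\tfrac{\delta_k}{2} u_k^T \nabla^2 f(x_k) u_k$ term that was responsible for the $\mathcal{O}(\delta_k)$ bias in Lemma \ref{def:lem2}.
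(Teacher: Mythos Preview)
Your proposal is correct and follows essentially the same approach as the paper's own proof: third-order Taylor expansion at $x_k$ with Lagrange remainders at $\bar{x}_k^{\pm}$, cancellation of the even-order terms in the difference, vanishing of the noise term by conditional mean-zero and independence of $u_k$, identification of the leading term via \eqref{eq:4.6}, and a coordinate-wise bound on the cubic remainder using $|\nabla^3_{ij\ell} f|\le B_1$, $|u_k^\ell|\le 1$, and $(1+\|u_k\|^2)^{-1}\le 1$ to arrive at $\tfrac{1}{6}(d+1)B_1 d^3\le \tfrac{B_1 d^4}{3}=c_2''$. The paper merely writes the subtracted Taylor expansion in one line using Kronecker-product notation, but the content and the constants are identical.
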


\begin{proof}
Using Taylor series expansion for truncated Cauchy perturbations we obtain: $f(x_k+\delta_k u_k)-f(x_k-\delta_k u_k)$ $=$ $2\delta_k u_k^T\nabla f(x_k)+ \frac{\delta_k^3}{6}(\nabla^3f(\Bar{x_k}^+)+\nabla^3f(\Bar{x_k}^-))(u_k\otimes u_k \otimes u_k)$.\\ 
Here $\otimes$ denotes the Kronecker product and $\Bar{x}^+$(respectively,
$\Bar{x}^-$) are on the line segment between $x$ and $x+\delta u$ (respectively,
$x-\delta u)$. So
\begin{flalign*}
\E[\tilde G(x_k,\xi_k,\xi_k^+,u_k,\delta_k)|\mathcal{F}_k]
&=\E\Big[\frac{F(x+\delta_k u_k,\epsilon_k^+)-F(x-\delta_k u_k,\epsilon_k^-)}{2\delta_k}\frac{(d+1)u_k}{1+\lVert u_k \rVert^2}\vert \mathcal{F}_k\Big]\\
&=\E\Big[\frac{f(x+\delta_k u_k)-f(x-\delta_k u_k)}{2\delta_k}\frac{(d+1)u_k}{1+\lVert u_k \rVert^2}\vert \mathcal{F}_k\Big]+\E\Big[\frac{\eta_k^+-\eta_k^-}{2\delta_k}\frac{(d+1)u_k}{1+\lVert u_k \rVert^2}\vert \mathcal{F}_k\Big],\\
&=\E\Big[\frac{(d+1)u_ku_k^T\nabla f(x_k)}{1+\lVert u_k \rVert^2}\vert \mathcal{F}_k\Big]+\E\Big[\frac{\delta_k^2(d+1)u_k}{12(1+\lVert u_k \rVert^2)}(\nabla^3f(\Bar{x_k}^+)+\nabla^3f(\Bar{x_k}^-))(u_k\otimes u_k \otimes u_k)\vert \mathcal{F}_k\Big],\\
&\leq c_2\nabla f(x_k)+\E\Big[\frac{\delta_k^2 (d+1)u_k}{12}(\nabla^3f(\Bar{x_k}^+)+\nabla^3f(\Bar{x_k}^-))(u_k\otimes u_k \otimes u_k)\vert \mathcal{F}_k\Big].
\end{flalign*}
Now the $j$th coordinate of the second term in RHS of the above inequality is bounded as follows:
\begin{flalign*}
\E\Big[\frac{\delta_k^2u_k^j(d+1)}{12}(\nabla^3f(\Bar{x_k}^+)+\nabla^3f(\Bar{x_k}^-))(u_k\otimes u_k \otimes u_k)|\mathcal{F}_k\Big]
&\leq \frac{B_1\delta_k^2(d+1)}{6}\sum_{l_1=1}^{d}\sum_{l_2=1}^{d}\sum_{l_3=1}^{d}\E(u_k^j u_k^{l_1}u_k^{l_2}u_k^{l_3})
\leq \frac{B_1 d^4\delta_k^2}{3}.
\end{flalign*}
The first inequality follows from \cref{def:ass4} and in the last one we use the fact $|u_k^l|\leq1$.
\end{proof}

\subsubsection*{Proof of \cref{prop4}}
\begin{proof}
Notice by \cref{prop3}, \[\E_{\alpha_{[k]}}[\Gamma'_k] =\E_{\alpha_{[k]}}[\Gamma'_k|\alpha_{k-1}]=\E_{\alpha_{[k]}}[\Gamma'_k|x_k]=\E_{\alpha[k]}[\tilde G-c_2\nabla f|x_k]\stackrel{(e)}{\leq} \tau\mathbf{1}_{d\times1},\] 
with  $\Gamma'_k \equiv \tilde G(x_k,\xi_k,\xi_k^+,u_k,\delta)-c_2\nabla f (x) \equiv \tilde G (x_k,\alpha_k) - c_2\nabla f(x_k)$ and $\tau=c_2''\delta^2$  and $ \alpha_k \equiv (\xi_k,\xi_k^+,u_k,\delta), k\geq 1$. In the above vector inequality $(e)$ implies that $x_i\geq y_i$ for $X,Y\in\R^d$. Also, by \cref{martingle} we have
\[\E_{\alpha_{[k]}}[\lVert \tilde G \rVert^2]\leq \lVert\E_{\alpha_{[k]}}[ \tilde G]\rVert^2+\frac{C''}{\delta^2}.\]
The rest proof is similar as \cref{coro2}.
\end{proof}

\subsubsection{Proof of Theorem \ref{prop6}}
\label{pf:th6}
\begin{lemma}
\label{prop5}
Let $\nabla f_\delta(x)$ is the balanced estimator defined in \cref{eq-3} then under \cref{def:ass4} we have
\begin{equation}
    \lVert \nabla f_\delta(x)\rVert^2\leq 2\lVert \nabla f(x)\rVert^2 c_{11}^2+2 d^2\delta^4 c_2''^2
\end{equation}
\end{lemma}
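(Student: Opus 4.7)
The plan is to mimic the decomposition used in the proof of Lemma \ref{prop3}, splitting $\nabla f_\delta(x)$ into a first-order term that carries $\nabla f(x)$ and a $\delta^2$-order remainder, and then bound each piece separately via $\|a+b\|^2 \le 2\|a\|^2 + 2\|b\|^2$.

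First, I would apply a third-order Taylor expansion to the numerator of the balanced estimator:
\[
f(x+\delta u) - f(x-\delta u) = 2\delta\, u^T \nabla f(x) + \tfrac{\delta^3}{6}\bigl(\nabla^3 f(\bar{x}^+) + \nabla^3 f(\bar{x}^-)\bigr)(u\otimes u\otimes u),
\]
which is the same expansion that drove Lemma \ref{prop3}; the second-order terms cancel by symmetry, which is the whole point of using the balanced estimator. Plugging this into \eqref{eq-3} gives a clean decomposition $\nabla f_\delta(x) = A\,\nabla f(x) + R(x,\delta)$, where $A := \E_u\bigl[\tfrac{(d+1)uu^T}{1+\|u\|^2}\bigr]$ and $R(x,\delta)$ is the expectation of the third-derivative term divided by $2\delta$, which is $O(\delta^2)$.

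Second, I would bound the two pieces. For the linear part, since $\tfrac{(d+1)uu^T}{1+\|u\|^2} \preceq (d+1)uu^T$ and $\E[uu^T]\preceq \E[\|u\|^2] \mathbb{I}$, Lemma \ref{def:lem5} gives $\|A\|_{\mathrm{op}} \le c_{11}$, so $\|A\nabla f(x)\|^2 \le c_{11}^2\|\nabla f(x)\|^2$. For the remainder, I would reuse the coordinate-wise estimate already derived inside the proof of Lemma \ref{prop3}: each component of $R(x,\delta)$ is bounded in absolute value by $c_2''\delta^2 = \tfrac{B_1 d^4}{3}\delta^2$, using Assumption \ref{def:ass4} and $|u^l|\le 1$ on the $\delta$-sphere. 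Summing the $d$ squared coordinates gives $\|R(x,\delta)\|^2 \le d\,(c_2'')^2\delta^4$, and the stated $d^2$ factor absorbs the slack from the sup-norm step. Combining the two bounds through $\|a+b\|^2\le 2\|a\|^2+2\|b\|^2$ yields the claim.

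The main obstacle will be the remainder term $R(x,\delta)$, which involves a contraction of a third-order tensor against $u\otimes u\otimes u$ weighted by $\tfrac{(d+1)u}{1+\|u\|^2}$. Handling it cleanly requires tracking a quadruple sum over indices and bounding the resulting expectations using only the $\|u\|\le 1$ truncation plus Assumption \ref{def:ass4}; one cannot avoid going coordinate-wise here. Once this tensor bound is in place exactly as in Lemma \ref{prop3}, the remaining steps reduce to routine applications of operator-norm inequalities and the moment bound in Lemma \ref{def:lem5}.
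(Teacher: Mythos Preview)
Your proposal is correct and follows essentially the same route as the paper: the paper also applies the third-order Taylor expansion to split $\nabla f_\delta(x)$ into $A\nabla f(x)$ plus the $\delta^2$-order remainder bounded coordinate-wise by $c_2''\delta^2$ via Lemma \ref{prop3}, then bounds $\|A\nabla f(x)\|\le c_{11}\|\nabla f(x)\|$ (via Jensen and Lemma \ref{def:lem5}, which is equivalent to your operator-norm argument) and squares using $(a+b)^2\le 2a^2+2b^2$. The only cosmetic difference is that the paper first bounds $\|\nabla f_\delta(x)\|$ and then squares, whereas you square the vector decomposition directly; both yield the stated inequality.
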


\begin{proof}
From the definition of balanced estimator we have \\
\centerline{$\nabla f_\delta (x_k)
= {\displaystyle 
\E_u\bigg[\bigg(\frac{f(x_k+\delta_k u_k)-f(x_k-\delta_k u_k)}{2\delta_k}\bigg)\frac{(d+1)u_k}{1+\lVert u_k \rVert^2}}\bigg].$}\\
By Taylor series expansion we obtain
\begin{equation*}
    \begin{split}
   \nabla f_\delta (x_k)&= \E_u\bigg[\bigg(\frac{2\delta u_k^T\nabla f(x_k)+ \frac{\delta^3}{6}(\nabla^3f(\Bar{x_k}^+)+\nabla^3f(\Bar{x_k}^-))(u_k\otimes u_k \otimes u_k)}{2\delta}\bigg)\frac{(d+1)u_k}{1+\lVert u_k \rVert^2}\bigg]\\
   &= \E_u\Big[\frac{(d+1)u_ku_k^T\nabla f(x_k)}{1+\lVert u_k \rVert^2}\Big]+\E_u\Big[\frac{\delta^2(d+1)u_k}{12(1+\lVert u_k \rVert^2)}(\nabla^3f(\Bar{x_k}^+)+\nabla^3f(\Bar{x_k}^-))(u_k\otimes u_k \otimes u_k)\Big],\\
   &\leq \E_u\Big[\frac{(d+1)u_ku_k^T\nabla f(x_k)}{1+\lVert u_k \rVert^2}\Big]+c_2''\delta^2\mathbf{1}_d
    \end{split}
\end{equation*}
The second term in the above inequality is obtain via same logic used in \cref{prop3}. Hence by taking norm of the both side and applying Jensen inequality we get 
\begin{equation*}
    \begin{split}
        \lVert \nabla f_\delta(x)\rVert &\leq \lVert \nabla f(x)\rVert \E\bigg[\frac{\lVert u \rVert^2(d+1)}{1+\lVert u \rVert^2}\bigg]+dc_2''\delta^2,\\
        &< \lVert \nabla f(x)\rVert \E\bigg[\lVert u \rVert^2(d+1)\bigg]+dc_2''\delta^2,\\
        & \leq \lVert \nabla f(x)\rVert c_{11}+dc_2''\delta^2,\\
        \lVert \nabla f_\delta(x)\rVert^2 & \leq 2\lVert \nabla f(x)\rVert^2 c_{11}^2+2 d^2\delta^4 c_2''^2.\\
        \end{split}
\end{equation*}
\end{proof}

\subsubsection*{Proof of \cref{prop6}}
\begin{proof}
Define $ \alpha_k \equiv (\xi_k,\xi_k^+,u_k), k\geq 1$ and $\alpha_{[N]}:=(\alpha_1,\alpha_2,...\alpha_N)$ as before in \cref{pf:Them2} \ Using Taylor series expansion over $f(x_k)$ we obtain for any $k=1,2,...,N$,
\begin{equation*}
    \begin{split}
        f(x_{k+1}) &\leq f(x_k)-\gamma_k\langle\nabla f(x_k),\tilde G(x_k,\alpha)\rangle+\frac{L}{2}\gamma_k^2 \left\lVert \tilde G(x_k,\alpha_k)\right\rVert^2\\
        &=f(x_k)-c_2\gamma_k\left\lVert\nabla f(x_k)\right\rVert^2- \gamma_k\langle \nabla f(x_k) ,\Gamma'_k\rangle
        +\frac{L}{2}\gamma_k^2 \left\lVert \tilde G(x_k,\alpha_k)\right\rVert^2.
    \end{split}
\end{equation*}
Here $\Gamma'_k \equiv \tilde G(x_k,\xi_k,\xi_k^+,u_k,\delta)-c_2\nabla f (x) \equiv \tilde G (x_k,\alpha_k) - c_2\nabla f(x_k)$.
Adding upto $N$-terms both side of these inequalities and applying $f^*\leq f(x_{N+1})$,
we obtain
\begin{align*}
        \sum_{k=1}^N c_2\gamma_k \left\lVert\nabla f(x_k)\right\rVert^2
        & \leq f(x_1)-f^*-\sum_{k=1}^N \gamma_k \langle\nabla f(x_k),\Gamma'_k\rangle +\frac{L}{2}\sum_{k=1}^N \gamma_k ^2\left\lVert \tilde G(x_k,\alpha_k)\right\rVert^2.
\end{align*}

Notice by \cref{prop3}
\[\E_{\alpha_{[k]}}[\Gamma'_k] =\E_{\alpha_{[k]}}[\Gamma'_k|\alpha_{k-1}]=\E_{\alpha_{[k]}}[\Gamma'_k|x_k]=\E_{\alpha[k]}[\tilde G-c_2\nabla f|x_k]\stackrel{(e)}{\leq} \tau\mathbf{1}_{d\times1},\] 
where $\Gamma'_k \equiv \tilde G(x_k,\xi_k,u_k)-c_2\nabla f (x) \equiv \tilde G (x_k,\alpha_k) - c_2\nabla f(x_k)$ and $\tau=c_2''\delta^2$. In the above vector inequality $(e)$ implies that $x_i\geq y_i$ for $X,Y\in\R^d$.

Now by \cref{def:ass10} and \cref{prop5} 
\begin{equation}
\begin{split}
    \E[\left\lVert G(x_k,\alpha_k) \right\rVert^2|\alpha_{[k-1]}] 
    &\leq \left[2c^2_{11}\E[\lVert \nabla F(x_k,\xi_k) \rVert^2|\alpha_{[k-1]}]+2d^2\delta^4c_2''^2\right]\\
     &\leq  \left[4c^2_{11}\E[\left\lVert \nabla f(x_k) \right\rVert^2|\alpha_{[k-1]}+\sigma^2]+2d^2\delta^4c_2''^2\right].
\end{split}
\end{equation}
Thus we have
\begin{flalign*}
       \sum_{k=1}^N c_2\gamma_k \E_{\alpha_{[N]}}\left\lVert\nabla f(x_k)\right\rVert^2
        & \leq D+B\tau\sum_{k=1}^N \gamma_k +\frac{L}{2}\sum_{k=1}^N \gamma_k^2 \left[4c^2_{11}\E[\left\lVert \nabla f(x_k) \right\rVert^2|\alpha_{[k-1]}+\sigma^2]+2d^2\delta^4c_2''^2\right].
\end{flalign*}

The above inequality uses the fact $-\lVert V \rVert_1\leq \sum_{k=1}^{d} v_k$ for a $d-$dimensional vector $V$ followed by $\lVert \nabla f(x_k)\rVert_1\leq \lVert \nabla f(x_k)\rVert\leq B$ in \cref{def:ass1}. Note that $D=f(x_1)-f^*$.
By rearranging the terms we have
\begin{flalign*}
        \sum_{k=1}^N  \left[c_2\gamma_k-2Lc^2_{11}\gamma_k^2\right]\E_{\alpha_{[N]}}\left\lVert\nabla f(x_k)\right\rVert^2
        &\leq D +B\tau\sum_{k=1}^N \gamma_k +L\left(4c_{11}\sigma^2+ 2d^2\delta^4c_2''^2\right)\sum_{k=1}^{N} \gamma_k^2.
\end{flalign*}
By the same argument as in \cref{eq100} and under the probability distribution 
\begin{equation}
    \p_R(k)=Prob(R=k)=\frac{\left[c_2\gamma_k-2Lc^2_{11}\gamma_k^2\right]}{\sum_{k=1}^{N}\left[c_2 \gamma_k-2Lc^2_{11}\gamma_k^2\right]},
\end{equation}
the following is obtained
\begin{flalign*}
         \E_{\alpha_{[N]}}\left\lVert\nabla f(x_R)\right\rVert^2
        &\leq\frac{1}{\sum_{k=1}^{N}\left[c_2\gamma_k-Lc^2_{11}\gamma_k^2\right]}\bigg[D +B\tau\sum_{k=1}^N \gamma_k +L\left(4c_{11}\sigma^2+ 2d^2\delta^4c_2''^2\right)\sum_{k=1}^{N} \gamma_k^2.\bigg].
\end{flalign*}
Note that, the condition of $\gamma_k$, see \cref{eq101}, is given by
\[
    \gamma_k =\min\left\{\frac{c_2}{4c^2_{11}L},\frac{1}{N^{1/2}}\right\}, \quad k=1,2,...N,
\]Thus one can have
\begin{equation*}
    \sum_{k=1}^N\left[c_2\gamma_k -2Lc^2_{11}\gamma_k^2\right]=Nc_2\gamma_1\left[1 -\frac{2Lc^2_{11}}{c_2}\gamma_1\right]\geq \frac{Nc_2\gamma_1}{2}.
\end{equation*}
From the above inequality, we can write
\begin{flalign*}
\E\left[\left\lVert\nabla f(x_R)\right\rVert^2\right]
        & \leq \frac{2D} {Nc_2\gamma_1}+\frac{2B\tau}{c_2}+\frac{L}{c_2}\left(4c_{11}\sigma^2+ 2d^2\delta^4c_2''^2\right)\gamma_1 ,\\
         &\leq \frac{2D}{Nc_2}\max\left\{\frac{4c^2_{11}L}{c_2}, N^{1/2}\right\}+\frac{2B\tau}{c_2}+\frac{L}{c_2N^{1/2}}\left(4c_{11}\sigma^2+ 2d^2\delta^4c_2''^2\right),\\
         &\stackrel{(h)}{\leq} \Big(\frac{2DL}{Nc_2^2}+ \frac{2D}{N^{1/2}}\Big)+\frac{2Bc_2''}{c_2N}+\frac{L}{2c_2N^{1/2}}\left(4c_{11}\sigma^2+\frac{2d^2c_2''}{N^2}\right).\\
\end{flalign*}
Note that $(h)$ uses the condition of $\delta=\frac{1}{N^{1/2}}$ by putting $\tau=c_2''\delta^2$. Thus, we get a rate of convergence of $\mathcal{O}(N^{-1/2}).$
\end{proof}

\section{Experiments}
\label{experiments}
In this section, we compare the performance of GSF, SPSA with symmetric Bernoulli($\pm1$) valued perturbations and RDSA  
with uniform ($-5,5$) perturbations, respectively,
with the TCSF (\cref{alg:alg1}) and TCSF with balanced estimator\cref{balanced} (B-TCSF). We consider both non-convex and convex objective functions with additive noise. 
We consider the following choices for the function $F(x,\xi)$ with $d=4$:

\begin{table}[H]
  \caption{The different models for the observations $F(\cdot,\cdot)$ considered in our experiments.}
  \label{sample-table}
  \centering
  \begin{tabular}{llll}
    \cmidrule(r){1-4}
    Name     & Functional form     & Optimal point $x^*$ & Min-Value\\
    \midrule
    Rastrigin & $10d+\sum_{i=1}^d(x_i^2-10\cos(2\pi x_i))+\xi_x$  & $(0,\ldots,0)^T$  & 0   \\
    Rosenbrock     & $\sum_{i=1}^{d-1}(100(x_{i+1}-x_i^2)^2+(1-x_i)^2)+\xi_x$, & $(1,\ldots,1)^T$  & 0    \\
    Quadratic  & $\frac{1}{2}x^TAx-b^Tx+\xi_x$       & $(-135.1,\ldots,-5.6)^T$ & $-17.528$ \\
    \bottomrule
  \end{tabular}
\end{table}

The setting for the case of quadratic function considered in \cref{sample-table} is \[F_3(x,\xi_x)=\frac{1}{2}x^TAx-b^Tx+\xi_x,\] with, $x^*=[-135.1150,-4.5224,130.1168,-5.6879]^T$ where
\[ A= \begin{bmatrix}
	2.3346 & 1.1384 & 2.5606 & 14507 \\
	1.1384 & 0.7860 & 1.2743 & 0.9531 \\
	2.5606 & 1.2743 & 2.8147 & 1.6487 \\
	1.4507 & 0.9531 & 1.6487 & 1.8123
\end{bmatrix}\] and 
\[b=[0.4218,
0.9157,0.7922,0.9595]^T.\]
We consider three settings for the noise $\eta$.
In the first setting, referred to as Type-1, we let $\xi_x=[x^T,1]\eta$,  where $\eta$ is a multivariate Gaussian with zero mean, and covariance matrix $\sigma^2\mathbb{I}_{d+1}$ with $\sigma=5$.
In the second setting, referred to as  Type-2, we have $\xi_x$ as a Gaussian random variable with mean $0$ and variance $\ln\lVert x \rVert_2$.
Finally, in the last setting, referred to as Type-3,  we have $\xi_x$ as a Gaussian random variable with mean $0$ and variance $\frac{1}{1+\ln\lVert x \rVert_2}$.
Note in particular that Rosenbrock is a badly-scaled function, while Rastrigin is a multi-modal function.

For the truncated Cauchy perturbations in TCSF and B-TCSF,
we generated samples from the multivariate t-distribution with one degree of freedom and then projected the same to the unit sphere.
In our experiments, we set $\epsilon=0.0001$, i.e., we stop the algorithm when $\lVert G\rVert \leq \epsilon$. For our initial experiments, we used the stepsize and smoothing parameter 
	as follows: $\gamma_k=\frac{1}{k^{0.6}}$ and $\delta_k=\frac{1}{k^{0.09}}$. However,  we used constant step-sizes $ 0.0001$ and smoothing parameters $ 0.001$, respectively, which work well on both algorithms. We run each algorithm 100 times with $N=1000$, $3000$, $10000$, respectively, for the Rastrigin, quadratic and Rosenbrock functions, and the averages of the optimal functional values are reported in \cref{table2} while the standard error estimates for the various algorithms from the different simulation runs are given in \cref{table4}. we considered the set  $[0,10]^4$ for Rastrigin and Rosenbrock functions, and $[0,150]^4$ for a quadratic objective function. We report the average number of iterations needed to reach an $\epsilon$-stationary point in \cref{table1} 

\begin{table}[H]
   \caption{Average functional values for five SG algorithms with diminishing step size and smoothing parameter}
   \label{table2}
  \centering
  \begin{tabular}{p{1cm}>{\bfseries}cccccc}
     \cmidrule(lr){1-7}
    & &  \thead{GSF} & \thead{TCSF} &\thead{B-TCSF} & \thead{SPSA} & \thead{RDSA}  \\
    \midrule
    & Rastrigin &   $0.00019$ \ \ &$1.03e-05$ \ \ &$0.0$ \ \ & $0.0092$ \ \ & $0.0094$  \\
     & Rosenbrock &   $0.002$ \ \ & $0.00066$ \ \ & $0.00062$ \ \ & $0.0010$ \ \ & $0.0017$\\
     \raisebox{10pt}[0pt][0pt]{\rotatebox[origin=c]{90}{\parbox{2.5cm}{\centering Type 1}}}  & Quadratic &   $-17.471$ \ \ & $-17.513$ \ \ & $-17.5286$ \ \  & $-17.4719$ \ \ & $-17.4731$ \\
    \bottomrule
    & Rastrigin &   $0.0097$ \ \ &$0.0009$ \ \ & $1.17e-05$ \ \ & $0.0096$ \ \ & $0.01$ \\
     & Rosenbrock &   $0.003$ \ \ & $0.0018$ \ \ & $0.00091$ \ \ & $8e-05$ \ \ & $0.0017$\\
     \raisebox{10pt}[0pt][0pt]{\rotatebox[origin=c]{90}{\parbox{2.5cm}{\centering Type 2}}}  & Quadratic &   $-17.5286$ \ \ & $-17.528$ \ \ & $-17.5248$ \ \ & $-17.52819$ \ \ & $-17.474$ \\
    \bottomrule
    & Rastrigin &   $0.0002$ \ \ &$1.11e-5$ \ \ & $0.0$ \ \ & $0.0$ \ \ & $0.009$ \\
     & Rosenbrock &   $0.017$ \ \ & $0.00072$ \ \ & $0.00026$ \ \ & $0.0021$ \ \ & $0.0017$\\
     \raisebox{10pt}[0pt][0pt]{\rotatebox[origin=c]{90}{\parbox{2.5cm}{\centering Type 3}}}  & Quadratic &   $-17.5253$ \ \ & $-17.5248$ \ \ & $-17.5279$ \ \ & $-17.492$ \ \ & $-17.488$ \\
    \bottomrule
  \end{tabular}
\end{table}

\begin{table}[H]
   \caption{Standard error for \cref{table2}}
   \label{table4}
  \centering
  \begin{tabular}{p{1cm}>{\bfseries}cccccc}
     \cmidrule(lr){1-7}
    & &  \thead{RSGF} & \thead{TCSF} &\thead{B-TCSF} & \thead{SPSA} & \thead{RDSA}  \\
    \midrule
   & Rastrigin    & $3.44\times 10^{-7}$ \ \ &$2.34\times 10^{-8}$ \ \ & $0$ \ \ & $9.41\times 10^{-6}$  \ \ & $4.84\times 10^{-6}$\\
    & Rosenbrock  & $4.96\times 10^{-5}$ \ \ &$3.75\times 10^{-7}$ \ \ & $1.24\times 10^{-7}$ \ \ & $6.34\times 10^{-6}$  \ \ & $1.51\times 10^{-6}$\\
     \raisebox{10pt}[0pt][0pt]{\rotatebox[origin=c]{90}{\parbox{2.5cm}{\centering Type 1}}}  & Quadratic     & $5.61\times 10^{-6}$ \ \ &$1.58\times 10^{-7}$ \ \ & $1.28\times 10^{-7}$ \ \ & $64.25\times 10^{-5}$  \ \ & $5.28\times 10^{-6}$\\
    \bottomrule
    & Rastrigin     & $1.26\times 10^{-5}$ \ \ &$1.62\times 10^{-7}$ \ \ & $2.96\times 10^{-7}$ \ \ & $4.29\times 10^{-5}$  \ \ & $5.61\times 10^{-6}$\\
    & Rosenbrock  & $1.59\times 10^{-6}$ \ \ &$4.57\times 10^{-7}$ \ \ & $4.29\times 10^{-7}$ \ \ & $4.27\times 10^{-6}$  \ \ & $1.59\times 10^{-6}$\\
     \raisebox{10pt}[0pt][0pt]{\rotatebox[origin=c]{90}{\parbox{2.5cm}{\centering Type 2}}}  & Quadratic     & $4.36\times 10^{-7}$ \ \ &$1.54\times 10^{-7}$ \ \ & $2.03\times 10^{-6}$ \ \ & $6.29\times 10^{-6}$  \ \ & $5.69\times 10^{-6}$\\
    \bottomrule
     & Rastrigin     & $4.29\times 10^{-6}$ \ \ &$1.80\times 10^{-7}$ \ \ & $0$ \ \ & $0$  \ \ & $2.49\times 10^{-5}$\\
    & Rosenbrock  & $2.41\times 10^{-6}$ \ \ &$1.94\times 10^{-5}$ \ \ & $1.62\times 10^{-7}$ \ \ & $2.36\times 10^{-5}$  \ \ & $6.16\times 10^{-5}$\\
     \raisebox{10pt}[0pt][0pt]{\rotatebox[origin=c]{90}{\parbox{2.5cm}{\centering Type 3}}}  & Quadratic     & $3.26\times 10^{-6}$ \ \ &$2.63\times 10^{-6}$ \ \ & $1.30\times 10^{-7}$ \ \ & $6.11\times 10^{-5}$  \ \ & $8.83\times 10^{-5}$\\
    \bottomrule
  \end{tabular}
\end{table}

There is a significant difference in optimal functional values obtained from TCSF, B-TCSF as compared to the other algorithms. One can notice from the \cref{table2} that $|f(x^*)-f(\Bar{x}_{sol})|$ $>$ $|f(x^*)-f(\hat{x}_{{sol}})|$, where $\Bar{x}_{sol}$ indicates the final output from GSF, SPSA, RDSA and $\hat{x}_{{sol}}$ denotes the final output from TCSF, B-TCSF. Only in the case of Rosenbrock and Rastrigin  with Type 2 and Type 3 error, SPSA works slightly better than TCSF though not so when compared with B-TCSF. However in case of the quadratic function we have noticed that TCSF beats other algorithms under Type-1 and Type-3 errors as well. 
Between TCSF and B-TCSF, B-TCSF is seen to perform slightly better on the whole. This is to be expected since B-TCSF has a lower bias because of the direct cancellation of even-order bias terms starting from the second order. 

\begin{table}[H]
   \caption{Average number of iterations to converge to the optimal point(constant step size)}
   \label{table1}
  \centering
  \begin{tabular}{p{1cm}>{\bfseries}cccccc}
     \cmidrule(lr){1-7}
    & &  \thead{RSGF} & \thead{TCSF} &\thead{B-TCSF} & \thead{SPSA} & \thead{RDSA}  \\
    \midrule
    & Rastrigin &   $837.6$ \ \ &$331.5$ \ \ &$149.6$ \ \ & $1268.39$ \ \ & $1281.39$  \\
     & Rosenbrock &   $5604.08$ \ \ & $3995.2$ \ \ & $3138.3$ \ \ & $6968.41$ \ \ & $8572.53$\\
     \raisebox{10pt}[0pt][0pt]{\rotatebox[origin=c]{90}{\parbox{2.5cm}{\centering Type 1}}}  & Quadratic &   $8727.82$ \ \ & $3234.86$ \ \ & $2994.15$ \ \  & $5952.7$ \ \ & $6000.3$ \\
    \bottomrule
    & Rastrigin &   $785.6$ \ \ &$638.9$ \ \ & $151.4$ \ \ & $1269.32$ \ \ & $1105.33$ \\
     & Rosenbrock &   $5924.5$ \ \ & $4834.2$ \ \ & $2231.2$ \ \ & $1782.1$ \ \ & $9419.4$\\
     \raisebox{10pt}[0pt][0pt]{\rotatebox[origin=c]{90}{\parbox{2.5cm}{\centering Type 2}}}  & Quadratic &   $3305.44$ \ \ & $3050.96$ \ \ & $2863.11$ \ \ & $3154.49$ \ \ & $5743.9$ \\
    \bottomrule
    & Rastrigin &   $896.8$ \ \ &$398.6$ \ \ & $128.7$ \ \ & $137.6$ \ \ & $1352.4$ \\
     & Rosenbrock &   $5450.51$ \ \ & $3376.6$ \ \ & $2735.2$ \ \ & $7946.4$ \ \ & $8847.3$\\
     \raisebox{10pt}[0pt][0pt]{\rotatebox[origin=c]{90}{\parbox{2.5cm}{\centering Type 3}}}  & Quadratic &   $3518.9$ \ \ & $3319.68$ \ \ & $2726.65$ \ \ & $4154.49$ \ \ & $6152.2$ \\
    \bottomrule
  \end{tabular}
\end{table}

\begin{table}[H]
   \caption{Standard error for \cref{table1}}
   \label{table5}
 \centering
  \begin{tabular}{p{1cm}>{\bfseries}cccccc}
     \cmidrule(lr){1-7}
    & &  \thead{RSGF} & \thead{TCSF} &\thead{B-TCSF} & \thead{SPSA} & \thead{RDSA}  \\
    \midrule
    & Rastrigin &   $4.79$ \ \ &$0.325$ \ \ &$0.1255$ \ \ & $0.395$ \ \ & $0.445$  \\
     & Rosenbrock &   $90.4$ \ \ & $38.35$ \ \ & $20.3$ \ \ & $108.2$ \ \ & $115.3$\\
     \raisebox{10pt}[0pt][0pt]{\rotatebox[origin=c]{90}{\parbox{2.5cm}{\centering Type 1}}}  & Quadratic &   $31.85$ \ \ & $24.49$ \ \ & $17.85$ \ \  & $1.07$ \ \ & $1.0407$ \\
    \bottomrule
    & Rastrigin &   $5.66$ \ \ &$5.412$ \ \ & $0.14$ \ \ & $0.445$ \ \ & $0.415$ \\
     & Rosenbrock &   $108.08$ \ \ & $45.36$ \ \ & $21.71$ \ \ & $13.99$ \ \ & $99.35$\\
     \raisebox{10pt}[0pt][0pt]{\rotatebox[origin=c]{90}{\parbox{2.5cm}{\centering Type 2}}}  & Quadratic &   $20.78$ \ \ & $20.85`$ \ \ & $15.18$ \ \ & $0.895$ \ \ & $1.13$ \\
    \bottomrule
    & Rastrigin &   $6.114$ \ \ &$0.319$ \ \ & $0.132$ \ \ & $0.128$ \ \ & $0.431$ \\
     & Rosenbrock &   $102.21$ \ \ & $22.93$ \ \ & $19.02$ \ \ & $103.36$ \ \ & $121.3$\\
     \raisebox{10pt}[0pt][0pt]{\rotatebox[origin=c]{90}{\parbox{2.5cm}{\centering Type 3}}}  & Quadratic &   $22.17$ \ \ & $24.18$ \ \ & $19.15$ \ \ & $1.04$ \ \ & $0.99$ \\
    \bottomrule
  \end{tabular}
\end{table}

In \cref{table1} above we have described the number of iterations needed to reach an $\epsilon-$stationary point while \cref{table5} describes the SE for corresponding iteration . There is a significant difference in the number of iterations required for converging to the optimal point in each of the cases. For example, GSF, RDSA and SPSA each take more than 2000 iterations with respect to TCSF, B-TCSF. However SPSA performs well as compared to RDSA, RSGF and in some experiments it beats TCSF (e.g., Rosenbrock with type-2 error). 

\begin{table}[H]
   \caption{Average functional values for five SG algorithm with constant step size and smoothing parameter}
   \label{table3}
  \centering
  \begin{tabular}{p{1cm}>{\bfseries}cccccc}
     \cmidrule(lr){1-7}
    & &  \thead{RSGF} & \thead{TCSF} &\thead{B-TCSF} & \thead{SPSA} & \thead{RDSA}  \\
    \midrule
   & Rastrigin    & $1.722$ \ \ &$0.091$ \ \ & $0.008$ \ \ & $0.987$  \ \ & $2.024$\\
    & Rosenbrock & $1.954$ \ \ & $0.013$ \ \ & $0.0006$ \ \ & $0.587$ \ \ & $1.743$\\
     \raisebox{10pt}[0pt][0pt]{\rotatebox[origin=c]{90}{\parbox{2.5cm}{\centering Type 1}}}  & Quadratic    & $-15.894$ \ \ & $-17.685$ \ \ & $-17.587$ \ \ & $-17.096$ \ \ & $-14.843$\\ 
    \bottomrule
    & Rastrigin    & $1.985$ \ \ &$0.065$ \ \ & $0.008$ \ \ & $0.848$ \ \ & $2.542$\\
    & Rosenbrock & $1.926$ \ \ & $0.0018$ \ \ & $0.0089$ \ \ & $0.683$ \ \ & $2.193$ \\
     \raisebox{10pt}[0pt][0pt]{\rotatebox[origin=c]{90}{\parbox{2.5cm}{\centering Type 2}}}  & Quadratic    & $-14.448$ \ \ & $-17.939$ \ \ & $-17.373$ \ \ & $-16.597$ \ \ & $-13.2493$\\
    \bottomrule
     & Rastrigin    & $1.875$ \ \ &$0.01$ \ \ & $0.00623$ \ \ & $0.879$ \ \ & $1.893$\\
    & Rosenbrock & $2.097$ \ \ & $0.091$ \ \ & $0.0034$ \ \ & $0.698$ \ \ & $2.314$\\
     \raisebox{10pt}[0pt][0pt]{\rotatebox[origin=c]{90}{\parbox{2.5cm}{\centering Type 3}}}  & Quadratic    & $-14.869$ \ \ & $-17.962$ \ \ & $-17.459$ \ \ & $-16.183$ \ \ & $-13.743$\\
    \bottomrule
  \end{tabular}
\end{table}
We can observe from \cref{table3} that for constant step size and smoothing parameter $f(\Bar{x}_{TCSF})$,$f(\Bar{x}_{B-TCSF})$ provides almost optimal functional value as compared to other algorithms. However, B-TCSF is more efficient than TCSF in this context.
Thus we can conclude that there is an improvement in empirical performance when TCSF and B-TCSF are used over other SG algorithms. 
\section{Conclusions and future work}
We proposed and analyzed a gradient estimation scheme, based on truncated Cauchy random perturbations, for solving a non-convex smooth optimization problem. We showed that our algorithm avoids traps and converges asymptotically to a local minimum. Our algorithm performs better than two popular gradient estimation schemes in the literature, namely SPSA and GSF, in terms of the asymptotic convergence rate. We also provided non-asymptotic rate for our algorithm that is the same as the asymptotic rate and better when common random noise is used in the simulations. Our algorithm also performs better than GSF, SPSA, RDSA empirically. 
Exploring the performance of the Newton method using Hessian estimation under the truncated Cauchy perturbations would be an interesting direction for future work. 
\appendix

\bibliography{refs}

\end{document}